\newcommand{\real}{\ensuremath{\mathbb{R}}}
\newcommand{\subscr}[2]{#1_{\textup{#2}}}
\newcommand{\diag}[1]{\operatorname{diag}(#1)}
\newcommand{\eps}{\varepsilon}
\renewcommand{\epsilon}{\varepsilon}
\newcommand{\Rat}[1]{\ensuremath{\mathbb{R}^{#1}}}
\newcommand{\B}{\mathcal{B}}
\newcommand{\bone}{\mathbf{1}}
\newcommand{\F}{\mathcal{F}}
\renewcommand{\H}{\mathcal{H}}
\newcommand{\Obs}{\mathcal{O}}
\newcommand{\X}{\mathcal{X}}
\renewcommand{\P}{\mathcal{P}}
\newcommand{\bP}{\mathbb{P}}
\newcommand{\bN}{\mathbb{N}}
\newcommand{\fa}{\mathfrak a}
\newcommand{\fA}{\mathfrak A}
\newcommand{\fC}{\mathfrak C}
\newcommand{\fD}{\mathfrak D}
\newcommand{\fE}{\mathfrak E}
\newcommand{\fm}{\mathfrak m}
\newcommand{\fM}{\mathfrak M}
\newcommand{\fQ}{\mathfrak Q}
\newcommand{\fR}{\mathfrak R}
\newcommand{\ones}{\mathbf 1}
\newcommand{\bE}{\ensuremath{\mathbb{E}}}
\newcommand{\diameter}[2]{{\rm diam}_{#1}(#2)}
\newcommand{\supp}[1]{{\rm supp}(#1)}
\newcommand{\oprocendsymbol}{\hbox{$\square$}}
\newcommand{\oprocend}{\relax\ifmmode\else\unskip\hfill\fi\oprocendsymbol}
\newcommand{\longthmtitle}[1]{\mbox{}{\bf \textit{(#1).}}}
\newcommand{\subalign}[1]{%
	\vcenter{%
		\Let@ \restore@math@cr \default@tag
		\baselineskip\fontdimen10 \scriptfont\tw@
		\advance\baselineskip\fontdimen12 \scriptfont\tw@
		\lineskip\thr@@\fontdimen8 \scriptfont\thr@@
		\lineskiplimit\lineskip
		\ialign{\hfil$\m@th\scriptstyle##$&$\m@th\scriptstyle{}##$\hfil\crcr
			#1\crcr
		}%
	}%
}
\newcommand{\TheTitle}{High-Confidence Data-Driven Ambiguity Sets for
  Time-Varying Linear Systems}
\newcommand{\TheAuthors}{D. Boskos, J. Cort{\'e}s, and S. Mart{\'i}nez}
\title{{\TheTitle}\thanks{This work was supported by the DARPA
    Lagrange program through award N66001-18-2-4027. A preliminary
    version of this paper appeared as~\cite{DB-JC-SM:20-acc} at the
    American Control Conference.}}
\author{Dimitris Boskos\thanks{Delft Center for Systems and Control, 
    Delft University of Technology (\email{d.boskos@tudelft.nl}).}
  \and
 Jorge Cort{\'e}s\thanks{Department of
 	Mechanical and Aerospace Engineering, University of California,
 	San Diego (\email{cortes,soniamd@ucsd.edu}).}
 \and
 Sonia Mart{\'i}nez\footnotemark[3]
 }
\begin{document}

\maketitle

\begin{abstract}
	This paper builds Wasserstein ambiguity sets for the unknown
	probability distribution of dynamic random variables leveraging
	noisy partial-state observations. The constructed ambiguity sets
	contain the true distribution of the data with quantifiable
	probability and can be exploited to formulate robust stochastic
	optimization problems with out-of-sample guarantees. We assume the
	random variable evolves in discrete time under uncertain initial
	conditions and dynamics, and that noisy partial measurements are
	available. All random elements have unknown probability
	distributions and we make inferences about the distribution of the
	state vector using several output samples from multiple realizations
	of the process. To this end, we leverage an observer to estimate the
	state of each independent realization and exploit the outcome to
	construct the ambiguity sets.  We illustrate our results in an
	economic dispatch problem involving distributed energy resources
	over which the scheduler has no direct control.
\end{abstract}

\begin{keywords}
	Distributional uncertainty, Wasserstein ambiguity sets, stochastic
	systems, state estimation
\end{keywords}






\begin{AMS}
	62M20, 62G35, 90C15, 93C05, 93E10
\end{AMS}

\section{Introduction}


Decisions under uncertainty are ubiquitous in a wide range of
engineering applications.  Faced with complex systems that include
components with probabilistic models, such decisions seek to provide
rigorous solutions with quantifiable guarantees in hedging against
uncertainty.  In practice, the designer makes inferences about
uncertain elements based on collected data and exploits them to
formulate data-driven stochastic optimization problems. This
decision-making paradigm has found applications in finance,
communications, control, medicine, and machine learning.  Recent
research focuses on how to retain high-confidence guarantees for the
optimization problems under plausible variations of the data.  To this
end, distributionally robust optimization (DRO) formulations evaluate
the optimal worst-case performance over an ambiguity set of
probability distributions that contains the true one with high
confidence. Such ambiguity sets are typically constructed under the
assumption that data are generated from a static distribution and can
be measured in a direct manner.  In this paper we significantly expand
on the class of scenarios for which reliable ambiguity sets can be
constructed.  We consider scenarios where the random variable is
dynamic and partial measurements, corrupted by noise, are
progressively collected from its evolving distribution.  In our
analysis, we exploit the underlying dynamics and study how the
probabilistic properties of the noise affect the ambiguity set size
while maintaining the same guarantees.

\textit{Literature review:} Optimal decision problems in the face of
uncertainty, like expected-cost minimization and chance-constrained
optimization, are the cornerstones of stochastic
programming~\cite{AS-DD-AR:14}.
Distributionally robust versions of stochastic optimization
problems~\cite{AB-LEG-AN:09,JB-KM:19,AS:17} carry out a worst-case
optimization over all possibilities from an ambiguity set of
probability distributions. This is of particular importance in
data-driven scenarios where the unknown distributions of the random
variables are inferred in an approximate manner using a finite amount
of data~\cite{DB-VG-NK:18a}.  To hedge this uncertainty, optimal
transport ambiguity sets have emerged as a promising tool. These sets
typically group all distributions up to some distance from the
empirical approximation in the Wasserstein metric~\cite{CV:03}.  There
are several reasons that make this metric a popular choice among the
distances between probability distributions, particularly, for
data-driven problems. Most notably, the Wasserstein metric penalizes
horizontal dislocations between distributions and provides ambiguity
sets that have finite-sample guarantees of containing the true
distribution and lead to tractable optimization problems. This has
rendered the convergence of empirical measures in the Wasserstein
distance an ongoing active research
area~\cite{JD-FM:19,SD-MS-RS:13,NF-AG:15,BK18,JW-FB:19,JW-QB:19}.
Towards the exploitation of Wasserstein ambiguity sets for DRO
problems, the work~\cite{PME-DK:17} introduces tractable
reformulations with finite-sample guarantees, further exploited
in~\cite{ZC-DK-WW:18,AH-AC-JL:19} to deal with distributionally robust
chance-constrained programs.
The work~\cite{AC-JC:20-tac} develops distributed optimization
algorithms using Wasserstein balls, while optimal transport ambiguity
sets have recently been connected to regularization for machine
learning~\cite{JB-YK-KM:16,RG-XC-AJK:17,SS-DK-PME:19}. The
paper~\cite{DL-SM:18-cdc} exploits Wasserstein balls to robustify
data-driven online optimization algorithms,
and~\cite{SSA-VAN-DK-PME:18} leverages them for the design of
distributionally robust Kalman filters.
Further applications of Wasserstein ambiguity sets include the
synthesis of robust control policies for Markov decision
processes~\cite{IY:17} and their data-driven extensions~\cite{IY:18b},
and regularization for stochastic predictive control
algorithms~\cite{JC-JL-FD:19}.
Several recent works have also devoted attention to distributionally 
robust problems in power systems control, including optimal power flow 
\cite{YG-KB-ED-ZH-THS:18,BL-JM-RJ:18} and economic dispatch 
\cite{FX-BMH-LF-DE-KC:19,JL-YC-CD-JL-JL:20,BKP-ARH-SB-DSC-AC:20}.
Time-varying aspects of Wasserstein ambiguity sets are considered in
our previous work: in~\cite{DL-DF-SM:19-ecc} for dynamic traffic
models, in~\cite{DL-DF-SM:20-lcss-arxiv} for online learning of
unknown dynamical environments, in~\cite{DB-JC-SM:21-tac}, which
constructs ambiguity balls using progressively assimilated dynamic
data for processes with random initial conditions that evolve under
deterministic dynamics, and in~\cite{FB-DB-JC-SM-DMT:21}, which
studies the propagation of ambiguity bands under hyperbolic PDE
dynamics.  In contrast, in the present work, the state distribution
does not evolve deterministically due to the presence of random
disturbances, which together with output measurements that are
corrupted by noise, generate additional stochastic elements that make
challenging the quantification of the ambiguity set guarantees.



%
\textit{Statement of contributions:} Our contributions revolve around
building Wasserstein ambiguity sets with probabilistic guarantees for
dynamic random variables when we have no knowledge of the probability
distributions of their initial condition, the disturbances in their
dynamics, and the measurement noise.
To this end, our first contribution estimates the states of several
process realizations from output samples and exploits these 
estimates to build a suitable empirical distribution as the center 
of an ambiguity ball.
Our second contribution is the exploitation of concentration of
measure results to quantify the radius of this ambiguity ball so that
it provably contains the true state distribution with high
probability.  To achieve this, we break the radius into nominal and
noise components. The nominal component captures the deviation between
the true distribution and the empirical distribution formed by the
state realizations.  The noise component captures the deviation
between the empirical distribution and the center of our ambiguity
ball. To quantify the latter, we carefully evaluate the impact of the
estimation error, which due to the measurement noise, does not have a
compactly supported distribution like the internal uncertainty and
requires a separate analysis.  Our third contribution is the extension
of these results to obtain \textit{simultaneous} guarantees about
ambiguity sets that are built along finite time horizons, instead of
at isolated time instances.  The fourth contribution is to generalize
a concentration inequality around the mean of sufficiently
light-tailed independent random variables, which enables us to obtain
tighter results when analyzing the effect of the estimation error.
%
%
%
Our last contribution is the validation of the results in simulation
for a distributionally robust economic dispatch problem, for which we
further provide a tractable reformulation.  We stress that our general
objective revolves around the robust uncertainty quantification (i.e.,
distributional inference) problem at hand, without having DRO as a
necessary end-goal. Further, our approach is fundamentally different
from classical Kalman filtering, where the initial state and dynamics
noise distributions are known and Gaussian, and hence, the state
distribution over time is also a known Gaussian random variable. Here,
instead, we are interested to infer the unknown state distribution
from data collected by multiple realizations of the dynamics. For each
such realization, we use an observer
%
%
since we have no concrete knowledge of the state and noise random 
models to directly invoke optimal filtering techniques.

\section{Preliminaries}\label{sec:prelims}

Here we present general notation and concepts from probability theory
used throughout the paper.

\textit{Notation:} We denote by $\|\cdot\|_p$ the $p$th norm in
$\Rat{n}$, $p\in[1,\infty]$, using also the notation $\|\cdot\|\equiv
\|\cdot\|_2$ for the Euclidean norm. The inner product of two vectors
$a,b\in\Rat{n}$ is denoted by $\langle a,b\rangle$ and the Khatri-Rao
product~\cite{SL:99} of $\bm a \equiv(a^1,\ldots,a^d)\in\Rat{d}$ and
$\bm b\equiv(b^1,\ldots,b^d)\in\Rat{dn}$, with each $b^i $ belonging
to $\Rat{n}$, is $\bm a*\bm b:=(a^1b^1,\ldots,a^db^d)\in\Rat{dn}$.  We
use the notation $B_p^n(\rho)$ for the ball of center zero and radius
$\rho$ in $\Rat{n}$ with the $p$th norm and $[n_1:n_2]$ for the set of
integers $\{n_1,n_1+1,\ldots,n_2\}\subset\bN\cup\{0\} =:\bN_0$. The
interpretation of a vector in $\Rat{n}$ as an $n\times 1$ matrix
should be clear form the context (this avoids writing double
transposes).  The diameter of a set $S\subset\Rat{n}$ with the $p$th
norm is defined as $\diameter{p}{S}:=\sup\{\|x-y\|_p\,|\,x,y\in S\}$
and for $z\in\Rat{n}$, $S+z:=\{x+z\,|\,x\in S\}$. We denote the
induced Euclidean norm of a matrix $A\in\Rat{m\times n}$ by
$\|A\|:=\max_{\|x\|=1}\|Ax\|/\|x\|$. Given $B\subset\Omega$, $\bone_B$
is the indicator function of $B$ on $\Omega$, with $\bone_B(x)=1$ for
$x\in B$ and $\bone_B(x)=0$ for $x\notin B$.

\textit{Probability Theory:} We denote by $\B(\Rat{d})$ the Borel
$\sigma$-algebra on $\Rat{d}$, and by $\P(\Rat{d})$ the probability
measures on $(\Rat{d},\B(\Rat{d}))$.  For any $p\ge 1$,
$\P_p(\Rat{d}):=\{\mu\in\P(\Rat{d})\,|\,
\int_{\Rat{d}}\|x\|^pd\mu<\infty\}$ is the set of probability measures
in $\P(\Rat{d})$ with finite $p$th moment. The Wasserstein distance
between $\mu,\nu\in\P_p(\real^d)$ is
\begin{align*}
	W_p(\mu,\nu):=\Big(\inf_{\pi\in\H(\mu,\nu)} \Big\{
	\int_{\Rat{d}\times\Rat{d}}\|x-y\|^p \pi(dx,dy)\Big\}\Big)^{1/p},
\end{align*}
where $\H(\mu,\nu)$ is the set of all couplings between $\mu$ and
$\nu$, i.e., probability measures on $\Rat{d}\times\Rat{d}$ with
marginals $\mu$ and $\nu$, respectively. For any $\mu\in\P(\Rat{d})$,
its support is the closed set
$\supp{\mu} := \{x\in\Rat{d}\,|\,\mu(U)>0\;\textup{for each
	neighborhood}\;U\;{\rm of}\;x\}$,
%
%
%
or equivalently, the smallest closed set with measure one. For a
random variable $X$ with distribution $\mu$ we also denote
$\supp{X}\equiv\supp{\mu}$. We denote the product of the distributions
$\mu$ in $\Rat{d}$ and $\nu$ in $\Rat{r}$ by the distribution
$\mu\otimes\nu$ in $\Rat{d}\times\Rat{r}$. The convolution
$\mu\star\nu$ of the distributions $\mu$ and $\nu$ on $\Rat{d}$ is the
image of the measure $\mu\otimes\nu$ on $\Rat{d}\times\Rat{d}$ under
the mapping $(x,y)\mapsto x+y$; equivalently,
$\mu\star\nu(B)=\int_{\Rat{d}\times\Rat{d}}\bone_B(x+y)\mu(dx)\nu(dy)$
for any $B\in\B(\Rat{d})$ (c.f. \cite[Pages 207,
208]{VB:07-vol1}). Given a measurable space $(\Omega,\F)$, an exponent
$p\ge 1$, the convex function $\real \ni x \mapsto
\psi_p(x):=e^{x^p}-1$, and the linear space of scalar random variables
$L_{\psi_p}:=\{X\,|\,\bE[\psi_p(|X|/t)]<\infty\;\textup{for
	some}\;t>0\}$ on $(\Omega,\F)$, the $\psi_p$-Orlicz norm
(cf.~\cite[Section 2.7.1]{RV:18}) of $X\in L_{\psi_p}$ is 
\begin{align*}
	\|X\|_{\psi_p}:=\inf\{t>0\,|\,\bE[\psi_p(|X|/t)]\le 1\}.
\end{align*}  
When $p=1$ and $p=2$, each random variable in $L_{\psi_p}$ is
sub-exponential and sub-Gaussian, respectively. We also denote by
$\|X\|_p\equiv\big(\bE\big[|X|^p\big]\big)^{\frac{1}{p}}$ the norm of
a scalar random variable with finite $p$th moment, i.e., the classical
norm in $L^p(\Omega)\equiv L^p(\Omega;P_X)$, where $P_X$ is the
distribution of $X$. The interpretation of $\|\cdot\|_p$ as the $p$th
norm of a vector in $\Rat{n}$ or a random variable in $L^p$ should be
clear from the context throughout the paper.
Given a set $\{X_i\}_{i\in I}$ of random variables, we denote by
$\sigma(\{X_i\}_{i\in I})$ the $\sigma$-algebra generated by them.  We
conclude with a useful technical result which follows from Fubini's 
theorem~\cite[Theorem 2.6.5]{RBA:72}.

\begin{lemma}\longthmtitle{Expectation
		inequality}\label{lemma:expectation:inequality}
	Consider the independent random vectors $X$ and $Y$, taking values
	in $\Rat{n_1}$ and $\Rat{n_2}$, respectively, and let $(x,y) \mapsto
	g(x,y)$ be integrable.  Assume that $\bE[g(x,Y)]\ge k(x)$ for some
	integrable function $k$ and all $x\in
	K$ with $\supp{X}\subset K\subset\Rat{n_1}$. Then, 
	$\bE[g(X,Y)]\ge\bE[k(X)]$.
\end{lemma}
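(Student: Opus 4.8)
The plan is to express $\bE[g(X,Y)]$ as an iterated integral against the product of the marginal laws, exploiting the independence of $X$ and $Y$, and then to insert the pointwise hypothesis inside the inner integral. Let $P_X$ and $P_Y$ denote the distributions of $X$ and $Y$ on $\Rat{n_1}$ and $\Rat{n_2}$, respectively. Since $X$ and $Y$ are independent, the law of the pair $(X,Y)$ is the product measure $P_X\otimes P_Y$, so that
\begin{align*}
  \bE[g(X,Y)]=\int_{\Rat{n_1}\times\Rat{n_2}} g(x,y)\,(P_X\otimes P_Y)(dx,dy).
\end{align*}
Because $g$ is integrable with respect to this product measure, I would invoke Fubini's theorem to iterate the integration, writing $\bE[g(X,Y)]=\int_{\Rat{n_1}} h(x)\,P_X(dx)$, where $h(x):=\int_{\Rat{n_2}} g(x,y)\,P_Y(dy)=\bE[g(x,Y)]$. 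Crucially, the measurability of $x\mapsto h(x)$ and its $P_X$-integrability are part of the conclusion of Fubini's theorem, so no separate argument is needed there.

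Next I would apply the hypothesis. For every $x\in K$ we have $h(x)=\bE[g(x,Y)]\ge k(x)$, so the desired pointwise bound holds on $K$. The role of the support assumption $\supp{X}\subset K$ is to ensure that the region where the bound might fail is $P_X$-null: since $\supp{X}$ is the smallest closed set of full measure, $P_X(\supp{X})=1$, and hence $P_X(K)=1$ (taking $K$ measurable; otherwise one simply works with $\supp{X}$ itself, which is closed and therefore Borel). Consequently,
\begin{align*}
  \bE[g(X,Y)]=\int_{K} h(x)\,P_X(dx)\ge\int_{K} k(x)\,P_X(dx)=\int_{\Rat{n_1}} k(x)\,P_X(dx)=\bE[k(X)],
\end{align*}
where the outer equalities use $P_X(K)=1$ together with the integrability of $h$ and $k$, and the middle inequality uses $h\ge k$ on $K$.

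The only genuinely delicate step is the interchange of the order of integration, which is precisely where Fubini's theorem (as cited in the excerpt) does the work; everything else is bookkeeping. In writing the argument I would be careful to pin down the integrability hypotheses: integrability of $g$ against $P_X\otimes P_Y$ both licenses the Fubini step and delivers the measurability and $P_X$-integrability of $h$, while integrability of $k$ (against $P_X$) guarantees that $\bE[k(X)]$ is well defined and finite, so that the concluding chain of (in)equalities is meaningful. I want to emphasize that the hypothesis holding only on $K$, rather than on all of $\Rat{n_1}$, causes no difficulty precisely because $P_X$ charges only a subset of $K$; this is the single place where the support condition is used.
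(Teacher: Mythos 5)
Your proposal is correct and follows essentially the same route as the paper's proof: independence gives the product measure, Fubini's theorem iterates the integral, and the support condition $\supp{X}\subset K$ reduces the integration to $K$ where the pointwise bound $\bE[g(x,Y)]\ge k(x)$ applies. Your added remarks on the measurability of $x\mapsto\bE[g(x,Y)]$ and on replacing $K$ by the (closed, hence Borel) support if needed are fine refinements that the paper leaves implicit.
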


\section{Problem formulation} \label{sec:pf}

Consider a stochastic optimization problem where the objective
function $x\mapsto f(x,\xi)$ depends on a random variable~$\xi$ with
an \textit{unknown} distribution $P_\xi$. To hedge this uncertainty,
rather than using the empirical distribution
\begin{align} \label{empirical:distribution}
	P_\xi^N:=\frac{1}{N}\sum_{i=1}^N\delta_{\xi^i},
\end{align}
formed by $N$ i.i.d.~samples $\xi^1,\ldots,\xi^N$ of $P_\xi$ to
optimize a sample average approximation of the expected value of~$f$,
one can instead consider the 
DRO problem
\begin{align}\label{eq:DRO}
	\inf_{x\in\X}\sup_{P\in\P^N}\bE_P[f(x,\xi)],
\end{align}
of evaluating the worst-case expectation over some \textit{ambiguity
	set} $\P^N$ of probability measures. This helps the designer
robustify the decision against plausible variations of the data, which
can play a significant role when the number of samples is limited.
Different approaches exist to construct the ambiguity set $\P^N$ so
that it contains the true distribution $P_{\xi}$ with high
confidence. We are interested in approaches that employ data, and in
particular the empirical distribution $P_\xi^N$, to construct them. In
the present setup, the data is generated by a dynamical system subject
to disturbances, and we only collect partial (instead of full)
measurements that are distorted by noise. Therefore, it is no longer
obvious how to build a candidate state distribution as
in~\eqref{empirical:distribution} from the collected samples. Further,
we seek to address this in a distributionally robust way, i.e.,
finding a suitable replacement $\widehat P_\xi^N$
for~\eqref{empirical:distribution} together with an associated
ambiguity set, by exploiting the dynamics of the underlying process.

To make things precise, consider data generated by a discrete-time
system
\begin{subequations}\label{eq:data}
	\begin{align}\label{data:state}
		\xi_{k+1}= A_k\xi_k+G_kw_k,\quad \xi_k\in\Rat{d}, \quad
		w_k\in\Rat{q},
	\end{align}
	with linear output
	\begin{align} \label{data:output} \zeta_k = H_k\xi_k+v_k,\quad
		\zeta_k\in\Rat{r}.
	\end{align}
\end{subequations}
The initial condition $\xi_0$ and the noises $w_k$ and $v_k$,
$k\in\mathbb N_0$ in the dynamics and the measurements, respectively,
are random variables with an \textit{unknown} distribution. We seek to
build an ambiguity set for the state distribution at certain time
$\ell\in\mathbb N$, by collecting data up to time $\ell$ from multiple
independent realizations of the process, denoted by $\xi^i$,
$i\in[1:N]$.  This can occur, for instance, when the same process is
executed repeatedly, or in multi-agent scenarios where identical
entities are subject to the same dynamics, see e.g.~\cite{SZ:19}.
%
The time-dependent matrices in the dynamics~\eqref{eq:data} widen the 
applicability of the results, since they can capture the linearization of 
nonlinear systems along trajectories or the sampled-data analogues of
continuous-time systems under irregular sampling, even if the latter
are linear and time invariant.  To formally describe the problem, we
consider a probability space $(\Omega,\F,\bP)$ containing all random
elements from these realizations, and make the following sampling
assumption.

\begin{assumption}\longthmtitle{Sampling
		schedule}\label{samplig:assumption}
	For each realization $i$ of system~\eqref{eq:data}, output samples
	$\zeta_0^i,\ldots,\zeta_{\ell}^i$ are collected over the discrete
	time instants of the sampling horizon $[0:\ell]$.
\end{assumption}
%
%
According to this assumption, the measurements of all realizations are
collected over the same time window $[0:\ell]$. 
%
To obtain quantifiable characterizations of the ambiguity sets, we
require some further hypotheses on the classes of the distributions
$P_{\xi_0}$ of the initial condition, $P_{w_k}$ of the dynamics noise,
and $P_{v_k}$ of the measurement errors
(cf.~Figure~\ref{fig:distribution:classes}).
These assumptions are made for individual realizations and allow us to 
consider 
non-identical observation error distributions---in this way, we allow for 
the 
case where each realization is measured by a non-identical sensor of 
variable 
precision.

\begin{assumption}\longthmtitle{Distribution
		classes}\label{assumption:distributiuon:class} Consider a finite
	sequence of 
	realizations $\xi^i$, $i\in[1:N]$ of \eqref{data:state} with
	associated outputs given by \eqref{data:output}, and noise elements
	$w_k^i$, $v_k^i$, $k\in\bN_0$. We assume the following:
	
	\noindent \textbf{H1:} The distributions $P_{\xi_0^i}$, $i\in[1:N]$,
	are identically distributed; further $P_{w_k^i}$, $i\in[1:N]$, are
	identically distributed for all $k\in\bN_0$.
	
	\noindent \textbf{H2:} The sigma fields
	$\sigma(\{\xi_0^i\}\cup\{w_k^i\}_{k\in\bN_0}\big)$,
	$\sigma\big(\{v_k^i\}_{k\in\bN_0}\big)$, $i\in[1:N]$ are independent.
	
	\noindent \textbf{H3:} The supports of the distributions $P_{\xi_0^i}$ and 
	$P_{w_k^i}$, $k\in\bN_0$ are compact, centered at the origin, and have 
	diameters $2\rho_{\xi_0}$ and $2\rho_w$, respectively, for all $i$.
	
	\noindent \textbf{H4:} The components of the random vectors $v_k^i$ have 
	uniformly bounded $L^p$ and $\psi_p$-Orlicz norms, as follows,  
	\begin{align*}
		0<m_v\le \|v_{k,l}^i\|_p\le M_v,\quad \|v_{k,l}^i\|_{\psi_p}\le C_v, 
	\end{align*}
	for all $k\in\bN_0$, $i\in[1:N]$, and $l\in[1:r]$, where $p\ge 1$. 
\end{assumption}

\begin{remark}\longthmtitle{Bounded $\psi_p$-Orlicz/$L_p$-norm ratio} 
	{\rm By definition, $\psi_p$-Orlicz norms can become significantly
		larger than $L_p$ norms for random variables with heavier
		tails. Thus, over an infinite sequence of random variables
		$\{X_k\}$, the ratio $\|X_k\|_{\psi_p}/\|X_k\|_p$ may grow
		unbounded. We exclude this by assuming that $C_v$ and
		$m_v$ are either positive or zero simultaneously, in which case we
		set $C_v/m_v:=0$.}  \oprocend
\end{remark}

\begin{figure}[tbh]
	\centering
	\includegraphics[width=.55\columnwidth]{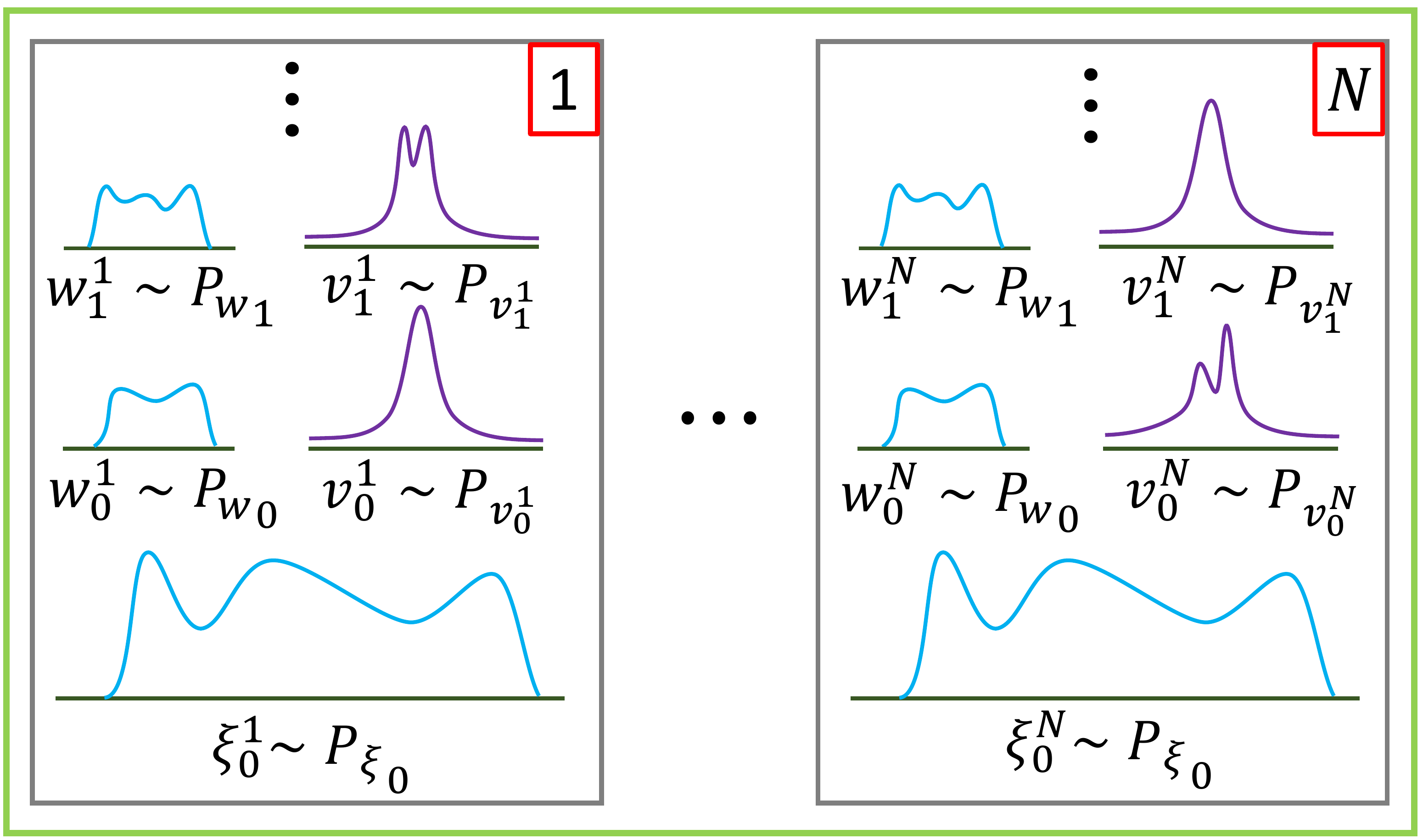}
	\caption{Illustration of the probabilistic models for the random
		variables in the dynamics and observations according to
		Assumption~\ref{assumption:distributiuon:class}.}
	\label{fig:distribution:classes}
\end{figure}

%
A direct approach to build the ambiguity set using the measurements of
the trajectories at time $\ell$ would be severely limited, since the
output map is in general not invertible. In such case, the inverse
image of each measurement is the translation of a subspace, whose
location is further obscured by the measurement noise. As a
consequence, candidate states for a generated output sample may lie at
an arbitrary distance apart, which could only be bounded by making
additional hypotheses about the support of the state
distribution. Instead, despite the lack of full-state information, we
aim to leverage the system dynamics to estimate the state from the
whole assimilated output trajectory.
%
To guarantee some boundedness notion for the state estimation
errors over arbitrary evolution horizons, we make the following
assumption.

\begin{assumption}\label{detectability:assumption}
	\longthmtitle{Detectability/uniform observability}
	System~\eqref{eq:data} satisfies one of the following properties:
	
	\noindent (i) It is time invariant and the pair $(A,H)$ (with $A\equiv 
	A_k$ and $H\equiv H_k$) is detectable. 
	
	\noindent (ii) It is uniformly observable, i.e., for some $t\in\mathbb N$, 
	the observability Gramian  
	\begin{align*}
		\Obs_{k+t,k}:=\sum_{i=k}^{k+t}\Phi_{i,k}^\top H_i^\top H_i\Phi_{i,k} 
	\end{align*}
	satisfies $\Obs_{k+t,k}\succeq bI$ for certain $b>0$ and all
	$k\in\bN_0$, where  we denote $\Phi_{k+s,k}:=A_{k+s-1}\cdots$ 
	\newline $A_{k+1}A_k$.
	Further, all system matrices are uniformly bounded and the singular
	values of $A_k$ and the norms of $\|H_k\|$ are uniformly bounded
	below.
\end{assumption}

\textit{Problem statement:} Under Assumptions~\ref{samplig:assumption}
and~\ref{assumption:distributiuon:class} on the measurements and
distributions of $N$ realizations of the system~\eqref{eq:data}, we
seek to construct an estimator $\widehat
\xi_\ell^i(\zeta_0^i,\ldots,\zeta_\ell^i)$ for the state of each
realization and build an ambiguity set for the state distribution at
time $\ell$ with probabilistic guarantees. Further, under
Assumption~\ref{detectability:assumption} on the system's
detectability/uniform observability properties, we aim to characterize
the effect of the estimation precision on the accuracy of the
ambiguity sets.

We proceed to address the problem in
Section~\ref{sec:state:estimator:ambig} by exploiting a Luenberger
observer to estimate the states of the collected data and using them
to replace the classical empirical
distribution~\eqref{empirical:distribution} in the construction of the
ambiguity set.  To obtain the probabilistic guarantees, we leverage
concentration inequalities to bound the distance between the updated
empirical distribution and the true state distribution with high
confidence.  To this end, we further quantify the increase of the
ambiguity radius due to the noise.  We also study the beneficial
effect on the ambiguity radius of detectability/uniform observability
for arbitrarily long evolution horizons in
Section~\ref{sec:uniform:noise:radius:bnd}.

\section{State-estimator based ambiguity sets} 
\label{sec:state:estimator:ambig}

We address here the question of how to construct an ambiguity set at
certain time instant $\ell$, when samples are collected from
\eqref{eq:data} according to Assumption~\ref{samplig:assumption}. If
we had access to $N$ independent full-state samples
$\xi_\ell^1,\ldots,\xi_\ell^N$ from the distribution of $\xi$ at
$\ell$, we could construct an ambiguity ball in the Wasserstein metric
$W_p$ centered at the empirical
distribution~\eqref{empirical:distribution} with
$\xi^i\equiv\xi_\ell^i$ and containing the true distribution with high
confidence. In particular, for any confidence $1-\beta>0$, it is
possible, cf. \cite[Theorem 3.5]{PME-DK:17}, to specify an ambiguity
ball radius $\eps_N(\beta)$ so that the true distribution of
$\xi_\ell$ is in this ball with confidence $1-\beta$, i.e.,
\begin{align*}
	\bP(W_p(P_{\xi_\ell}^N,P_{\xi_\ell})\le\eps_N(\beta))\ge 1-\beta .
\end{align*}
Instead, since we only can collect noisy partial measurements of the
state, we use a Luenberger observer to estimate $\xi$ at time~$\ell$.
The dynamics of the observer, initialized at zero, is given by
\begin{align}\label{observer:dynamics}
	\widehat{\xi}_{k+1} =
	A_k\widehat{\xi}_k+K_k(H_k\widehat{\xi}_k-\zeta_k), \qquad
	\widehat{\xi}_0=0,
\end{align}
where each $K_k$ is a nonzero gain matrix.  Using the corresponding
estimates from system \eqref{observer:dynamics} for the independent
realizations of \eqref{data:state}, we define the \textit{(dynamic)
	estimator-based empirical distribution}
\begin{align} \label{dyn:estim:empirical:distribution} \widehat
	P_{\xi_k}^N:=\frac{1}{N}\sum_{i=1}^N\delta_{\widehat{\xi}_k^i},
\end{align}
Denoting by $e_k:=\xi_k-\widehat{\xi}_k$ the error
between~\eqref{data:state} and the observer \eqref{observer:dynamics},
the error dynamics is $ e_{k+1} = F_ke_k+G_kw_k+K_kv_k$, $e_0=\xi_0$,
where $F_k:=A_k+K_kH_k$ and $\xi_0$ is the initial condition
of~\eqref{data:state}. In particular,
\begin{align} \label{error:formula} e_k =
	\Psi_k\xi_0+\sum_{\kappa=1}^k\big(\Psi_{k,k-\kappa+1}
	G_{k-\kappa}w_{k-\kappa}+\Psi_{k,k-\kappa+1}K_{k-\kappa}v_{k-\kappa}\big)
\end{align}
for all $k\ge 1$, where $\Psi_{k+s,k}:=F_{k+s-1}\cdots F_{k+1}F_k$,
$\Psi_{k,k}:=I$ and $\Psi_k:=\Psi_{k,0}$. To build the ambiguity set
at time $\ell$, we set its center at the estimator-based empirical
distribution $\widehat P_{\xi_\ell}^N$ given
by~\eqref{dyn:estim:empirical:distribution}. In what follows, we
leverage concentration of measure results to identify an ambiguity
radius $\psi_N(\beta)$ so that the resulting Wasserstein ball contains
the true distribution with a given confidence $1-\beta$.  Note that
even if a distributionally robust framework is not employed, replacing
the empirical distribution by the estimator empirical distribution in
\eqref{dyn:estim:empirical:distribution} does no longer guarantee
consistency, in the sense that the estimator empirical distribution
\textit{does not necessarily converge (weakly) to the true
	distribution}. Hence, there is also no indication that the solution
to the associated estimator Sample Average Approximation (SAA)
problem, i.e., to
\begin{align*}
	\inf_{x\in\X}\frac{1}{N}\sum_{i=1}^N f(x,\xi_\ell^i)
\end{align*}
with $\xi_\ell^i$ replaced by $\widehat\xi_\ell^i$, will be a
consistent estimator of the solution to the nominal stochastic
optimization problem.  This is a fundamental limitation that is
justified by the fact that, in general, the estimation error is
dependent on the state realization, i.e., it has a variable
distribution when conditioned on the state and the internal noise,
and so its effect cannot be easily reversed
(this may only be possible in rather degenerate cases, e.g., one has
access to full-sate samples and the measurement noise is known).
%

Note that the random variable $\xi_k^i$ of a system realization at
time $k$ is a function $\xi_k^i(\xi_0^i,\bm w_k^i)$ of the random
initial condition $\xi_0^i$ and the dynamics noise $\bm
w_k^i\equiv(w_0^i,\ldots,w_{k-1}^i)$.  Analogously, the estimated
state $\widehat{\xi}_k^i$ of each observer realization is a stochastic
variable $\widehat{\xi}_k^i(\xi_0^i,\bm w_k^i,\bm v_k^i)$ with
additional randomness induced by the output noise $\bm
v_k^i\equiv(v_0^i,\ldots,v_{k-1}^i)$. Using the compact notation
$\bm{\xi}_0\equiv(\xi_0^1,\ldots,\xi_0^N)$, $\bm w_k\equiv(\bm
w_k^1,\ldots,\bm w_k^N)$, and $\bm v_k\equiv(\bm v_k^1,\ldots,\bm
v_k^N)$ for the corresponding initial conditions, dynamics noise, and
output noise of all realizations, respectively, we can denote the
empirical and estimator-based-empirical distributions at time $\ell$ 
as $P_{\xi_\ell}^N(\bm{\xi}_0,\bm w_\ell)$ and $\widehat
P_{\xi_\ell}^N(\bm{\xi}_0,\bm w_\ell,\bm v_\ell)$.  If we view the
initial conditions and the corresponding internal noise of the
realizations $\xi^i$ over the whole time horizon as deterministic
quantities, we use the alternative notation $P_{\xi_\ell}^N(\bm
z,\bm{\omega})$ and $\widehat P_{\xi_\ell}^N(\bm z,\bm{\omega},\bm
v_\ell)$ for the corresponding distributions, where $\bm
z=(z^1,\ldots,z^N)$, $z^1\equiv\xi_0^1,\ldots,z^N\equiv\xi_0^N$, and
$\bm{\omega}=(\bm{\omega}^1,\ldots,\bm{\omega}^N)$,
$\bm{\omega}^1\equiv\bm w_{\ell}^1,\ldots,\bm{\omega}^N\equiv\bm
w_{\ell}^N$.  We also denote by $P_{\xi_{\ell}}$ the true distribution
of the data at discrete time $\ell$, where from~\eqref{data:state},
\begin{align} \label{pushed:forward:ditributiuon} 
	\xi_{\ell}=\Phi_\ell\xi_0+\sum_{k=1}^\ell\Phi_{\ell,\ell-k+1}
	G_{\ell-k}w_{\ell-k},
\end{align}
where $\Phi_\ell:=\Phi_{\ell,0}$ and $\Phi_{\ell,\ell}:=I$ (and with
$\Phi_{k+\delta k,k}$ defined in
Assumption~\ref{detectability:assumption}).  Then, it follows from
\textbf{H1} and \textbf{H2} in
Assumption~\ref{assumption:distributiuon:class} that the random states
$\xi_{\ell}^i$ of the system realizations are independent and
identically distributed. Leveraging this, our goal is to associate to
each confidence $1-\beta$, an ambiguity radius $\psi_N(\beta)$ so that
\begin{align}\label{ambiguity:set:confidence}
	\bP(W_p(\widehat P_{\xi_{\ell}}^N,P_{\xi_{\ell}})\le\psi_N(\beta))\ge 
	1-\beta.
\end{align}
To achieve this, we decompose the confidence as the product of two
factors:
\begin{align} \label{confidence:decomposition}
	1-\beta=(1-\subscr{\beta}{nom})(1-\subscr{\beta}{ns}).
\end{align}
%
%
%
The first factor (the nominal component ``nom'') is exploited to
control the Wasserstein distance between the empirical
distribution and the true state distribution~$P_{\xi_\ell}$.  The
purpose of the second factor (the noise component ``ns'') is to bound
the Wasserstein distance between the true- and the
estimator-based-empirical distributions, that 
is affected by the measurement noise.
Using this decomposition, our strategy to get
\eqref{ambiguity:set:confidence} builds on further breaking the
ambiguity radius as
\begin{align}\label{psi:dfn}
	\psi_N(\beta):= \eps_N(\subscr{\beta}{nom})
	+\widehat\eps_N(\subscr{\beta}{ns}) .
\end{align}
%
%
We exploit what is known~\cite{DB-JC-SM:21-tac} for the no-noise case
to bound the \textit{nominal ambiguity radius}
$\eps_N(\subscr{\beta}{nom})$ with confidence $1-\subscr{\beta}{nom}$.
Moreover, we bound the \textit{noise ambiguity radius}
$\widehat\eps_N(\subscr{\beta}{ns})$ with confidence
$1-\subscr{\beta}{ns}$.  This latter radius corresponds to the impact
on distributional uncertainty of the internal and measurement
noise. In the next sections we present the precise individual bounds
for these terms and then combine them to obtain the overall ambiguity
radius.

\subsection{Nominal ambiguity radius} \label{subsec:nominal:radius}

According to Assumption~\ref{assumption:distributiuon:class}, the
initial condition and internal noise distributions are compactly
supported, and hence, the same holds also for the state distribution
along time. We will therefore use the following result, that is
focused on compactly supported distributions and bounds the distance
between the true and empirical distribution for any fixed confidence
level.

\begin{proposition}\longthmtitle{Nominal ambiguity radius~\cite[Corollary
		3.3]{DB-JC-SM:21-tac}} \label{prop:nominal:radius}
	Consider a sequence $\{X_i\}_{i\in\bN}$ of i.i.d. $\Rat{d}$-valued
	random variables with a compactly supported distribution $\mu$. Then
	for any $p\ge 1$, $N\ge 1$, and confidence $1-\beta$ with
	$\beta\in(0,1)$, we have $\bP(W_p(\mu^N,\mu) \le
	\eps_N(\beta,\rho))\ge 1-\beta$, where
	\begin{align} 
		\eps_{N}(\beta,\rho) & :=
		\begin{cases}  
			\left(\frac{\ln(C\beta^{-1})}{c}\right)^{\frac{1}{2p}}\frac{\rho}{N^{\frac{1}{2p}}},
			& {\rm if}\; p>d/2,
			\\
			h^{-1}\left(\frac{\ln(C\beta^{-1})}{cN}\right)^{\frac{1}{p}}\rho,
			& {\rm if}\; p=d/2,
			\\
			\left(\frac{\ln(C\beta^{-1})}{c}\right)^{\frac{1}{d}}\frac{\rho}{N^{\frac{1}{d}}},
			& {\rm if}\; p<d/2,
		\end{cases} \label{epsN:dfn}
	\end{align}
	$\mu^{N}:=\frac{1}{N}\sum_{i=1}^N\delta_{X_i}$, 
	$\rho:=\frac{1}{2}\diameter{\infty}{\supp{\mu}}$,  
	$h(x):=\frac{x^2}{(\ln(2+1/x))^2}$, $x>0$, and the constants $C$
	and $c$ depend only on $p$ and $d$.
\end{proposition}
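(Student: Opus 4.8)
The plan is to deduce the bound from the Fournier--Guillin concentration estimates for empirical measures~\cite{NF-AG:15}, after a normalization that exposes the dependence on $\rho$. First I would exploit the homogeneity of the Wasserstein distance under affine maps: translating $\mu$ so that $\supp{\mu}$ is centered at the origin and rescaling by $1/\rho$ produces a distribution $\tilde\mu$ whose support lies in the fixed $\infty$-norm ball $B_\infty^d(1)$, since $\rho=\frac12\diameter{\infty}{\supp{\mu}}$. Because $W_p$ is $1$-homogeneous and translation invariant, the corresponding empirical measures satisfy $W_p(\mu^N,\mu)=\rho\,W_p(\tilde\mu^N,\tilde\mu)$. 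Thus it suffices to prove the estimate at unit scale with constants that are universal in $(p,d)$, and then multiply the resulting radius by $\rho$; this is precisely the mechanism by which the factor $\rho$ is pulled out of~\eqref{epsN:dfn} and the constants $C,c$ end up depending only on $p$ and $d$.

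Second, since $\tilde\mu$ is supported in a fixed compact set, it admits a finite exponential moment $\int_{\Rat{d}} e^{\gamma\|x\|^\alpha}\,d\tilde\mu<\infty$ with parameters that may be chosen uniformly over all such distributions. The Fournier--Guillin concentration inequality then yields, for a universal pair $(C,c)$ depending only on $p,d$, a tail bound of the form
\begin{align*}
\bP\big(W_p(\tilde\mu^N,\tilde\mu)\ge t\big)\le C\exp\big(-cN\,r_{p,d}(t)\big),
\end{align*}
where $r_{p,d}(t)=t^{2p}$ if $p>d/2$, $r_{p,d}(t)=t^{d}$ if $p<d/2$, and $r_{p,d}(t)=h(t^{p})$ in the critical case $p=d/2$, with $h$ encoding the logarithmic correction that appears at the threshold. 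The exponents $2p$ and $d$ are exactly those matching the known rates $N^{-1/(2p)}$ and $N^{-1/d}$ for the expected Wasserstein error, so at the typical scale one has $N\,r_{p,d}(t)\asymp 1$.

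Third, I would invert this tail bound. Setting its right-hand side equal to $\beta$ and solving for $t$ gives the unit-scale threshold: for $p>d/2$ one obtains $t=(c^{-1}\ln(C\beta^{-1}))^{1/(2p)}N^{-1/(2p)}$, for $p<d/2$ one obtains $t=(c^{-1}\ln(C\beta^{-1}))^{1/d}N^{-1/d}$, and for $p=d/2$ one solves $h(t^p)=\ln(C\beta^{-1})/(cN)$ to get $t=\big[h^{-1}(\ln(C\beta^{-1})/(cN))\big]^{1/p}$. Multiplying by $\rho$ reproduces the three cases of $\eps_N(\beta,\rho)$ verbatim, and monotonicity of $\exp$ turns the tail bound into the stated confidence guarantee $\bP(W_p(\mu^N,\mu)\le\eps_N(\beta,\rho))\ge 1-\beta$.

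The hard part will be the critical case $p=d/2$. Away from the threshold the inversion is an elementary power law, but at $p=d/2$ the $N^{-1/2}$ decay of $\bE[W_p^p]$ picks up a $\ln(1+N)$ factor and the concentration bound degrades logarithmically; capturing this sharply is exactly what forces the auxiliary function $h(x)=x^2/(\ln(2+1/x))^2$ and the use of its inverse in place of a clean exponent. A secondary technical point is justifying that $(C,c)$ depend only on $(p,d)$: this must be read off from the Fournier--Guillin constants applied to the normalized $\tilde\mu$, using only that $\supp{\tilde\mu}\subset B_\infty^d(1)$ so that the exponential-moment parameters can be fixed universally. Since the statement is quoted as~\cite[Corollary 3.3]{DB-JC-SM:21-tac}, I would otherwise simply invoke that reference, but the reduction-plus-inversion argument above is the route by which I would reconstruct it.
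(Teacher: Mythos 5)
Your reconstruction is correct and follows essentially the same route as the paper's source for this result: the paper does not reprove the proposition but cites \cite[Corollary 3.3]{DB-JC-SM:21-tac}, which is obtained exactly by the translation/dilation reduction to unit support (so that $W_p$ scales by $\rho$ and the constants depend only on $p,d$) followed by inverting the Fournier--Guillin tail bound of \cite{NF-AG:15}, with the three regimes $t^{2p}$, $t^{d}$, and $h(t^{p})$ matching the cases of \eqref{epsN:dfn}. No gaps to report.
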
   

This result shows how the nominal ambiguity radius depends on the size
of the distribution's support, the confidence level, and the number of
samples, and is based on recent concentration of measure inequalities
from~\cite{NF-AG:15}. The determination of the constants $C$ and $c$
in \eqref{epsN:dfn}
for the whole spectrum of data dimensions $d$ and Wasserstein
exponents $p$ is a particularly cumbersome task. 
In Section~\ref{subsec:explicit:contants} we provide some
alternative concentration of measure results and use them to obtain
explicit formulas for these constants when $d>2p$.


\subsection{Noise ambiguity  
	radius}\label{subsec:noise:radius}

In this section, we quantify the noise ambiguity radius
$\widehat\eps_N(\subscr{\beta}{ns})$ for any prescribed confidence
$1-\subscr{\beta}{ns}$. We first give a result that uniformly bounds
the distance between the true- and estimator-based-empirical
distributions with prescribed confidence for all values of the initial
condition and the internal noise from the set
$B_\infty^{Nd}(\rho_{\xi_0})\times B_\infty^{N\ell q}(\rho_w)$, which
contains the support of their joint distribution (and hence all their
possible realizations). For the results of this section, the initial
condition and the internal noise are interpreted as deterministic
quantities, as discussed above. %
%
%
%

\begin{lemma}\longthmtitle{Distance between true- \&
		estimator-based-empirical     distribution}\label{lemma:empirical:vs:true}
	Let $(\bm z,\bm \omega)\in B_\infty^{Nd}(\rho_{\xi_0})\times
	B_\infty^{N\ell q}(\rho_w)$ and consider the discrete distribution
	$P_{\xi_\ell}^N\equiv P_{\xi_\ell}^N(\bm z,\bm \omega)$ and the
	empirical distribution $\widehat P_{\xi_\ell}^N\equiv \widehat
	P_{\xi_\ell}^N(\bm z,\bm \omega,\bm v_{\ell})$, where $\bm v_\ell$
	is the measurement noise of the realizations.  Then,
	\begin{subequations}
		\begin{align}
			W_p(\widehat P_{\xi_\ell}^N,P_{\xi_\ell}^N) & \le
			2^{\frac{p-1}{p}} \fM_w +2^{\frac{p-1}{p}} \Big(\frac{1}{N}
			\sum_{i=1}^N(\fE^i)^p \Big)^\frac{1}{p}, \quad
			\text{where} \label{empirical:true:vs:estimated:bound1}
			\\
			\fM_w & :=
			\sqrt{d}\|\Psi_\ell\|\rho_{\xi_0}+\sqrt{q}\sum_{k=1}^\ell
			\|\Psi_{\ell,\ell-k+1}G_{\ell-k}\|\rho_w, \label{frakMw}
			\\
			\fE^i & \equiv \fE(\bm v^i):= \sum_{k=1}^\ell
			\|\Psi_{\ell,\ell-k+1}K_{\ell-k}\|\|v_{\ell-k}^i\|_1. \label{frak:Eis}
		\end{align}
	\end{subequations}
\end{lemma}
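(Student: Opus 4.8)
The plan is to bound $W_p$ via one explicitly chosen (and generally suboptimal) coupling, and then control the resulting pointwise estimation errors by a purely deterministic argument. Since both $P_{\xi_\ell}^N$ and $\widehat P_{\xi_\ell}^N$ are uniform measures over $N$ atoms, the coupling $\pi:=\frac1N\sum_{i=1}^N\delta_{(\widehat\xi_\ell^i,\xi_\ell^i)}$ that matches the $i$th atom of one to the $i$th atom of the other has the correct marginals, hence $\pi\in\H(\widehat P_{\xi_\ell}^N,P_{\xi_\ell}^N)$, and evaluating the transport cost along it gives
\begin{align*}
  W_p(\widehat P_{\xi_\ell}^N,P_{\xi_\ell}^N)^p\le\frac1N\sum_{i=1}^N\|\widehat\xi_\ell^i-\xi_\ell^i\|^p=\frac1N\sum_{i=1}^N\|e_\ell^i\|^p .
\end{align*}
This reduces everything to a pointwise bound on each estimation error $\|e_\ell^i\|$.

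I would then substitute the closed-form error representation~\eqref{error:formula} at $k=\ell$ and split $e_\ell^i=\epsilon_w^i+\epsilon_v^i$ into its internal and measurement-noise contributions, $\epsilon_w^i:=\Psi_\ell\xi_0^i+\sum_{k=1}^\ell\Psi_{\ell,\ell-k+1}G_{\ell-k}w_{\ell-k}^i$ and $\epsilon_v^i:=\sum_{k=1}^\ell\Psi_{\ell,\ell-k+1}K_{\ell-k}v_{\ell-k}^i$. By the triangle inequality and submultiplicativity of the induced norm, $\|\epsilon_w^i\|\le\|\Psi_\ell\|\,\|\xi_0^i\|+\sum_{k=1}^\ell\|\Psi_{\ell,\ell-k+1}G_{\ell-k}\|\,\|w_{\ell-k}^i\|$. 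Since $(\bm z,\bm\omega)\in B_\infty^{Nd}(\rho_{\xi_0})\times B_\infty^{N\ell q}(\rho_w)$, each block obeys $\|\xi_0^i\|_\infty\le\rho_{\xi_0}$ and $\|w_{\ell-k}^i\|_\infty\le\rho_w$; converting to the Euclidean norm through $\|\cdot\|\le\sqrt d\,\|\cdot\|_\infty$ (resp.\ $\sqrt q$) recovers exactly $\|\epsilon_w^i\|\le\fM_w$ of~\eqref{frakMw}. For the noise term, the inequality $\|v_{\ell-k}^i\|\le\|v_{\ell-k}^i\|_1$ gives $\|\epsilon_v^i\|\le\fE^i$ as in~\eqref{frak:Eis}, so that $\|e_\ell^i\|\le\fM_w+\fE^i$ for every $i$.

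To assemble the final estimate I would feed this into the coupling bound and use the convexity inequality $(a+b)^p\le 2^{p-1}(a^p+b^p)$, valid for $a,b\ge 0$ and $p\ge 1$, to obtain $W_p(\widehat P_{\xi_\ell}^N,P_{\xi_\ell}^N)^p\le 2^{p-1}\big(\fM_w^p+\frac1N\sum_{i=1}^N(\fE^i)^p\big)$. Taking $p$th roots and invoking the subadditivity of $t\mapsto t^{1/p}$ on $[0,\infty)$ (which holds because $1/p\le 1$) then separates the two contributions and places the factor $2^{(p-1)/p}$ in front of each, which is exactly the claimed bound~\eqref{empirical:true:vs:estimated:bound1}.

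The argument is fundamentally deterministic: the noise realization $\bm v_\ell$ enters only through the fixed scalars $\fE^i$, so there is no genuine analytic obstacle and the effort lies in the bookkeeping. The one point demanding care is the norm conversion in the second step---hypothesis \textbf{H3} is stated via $\infty$-norm diameters, so the dimensional factors $\sqrt d$ and $\sqrt q$ must be tracked faithfully to land on $\fM_w$ precisely---and the deliberate passage from $\|v_{\ell-k}^i\|$ to $\|v_{\ell-k}^i\|_1$ is what later renders the componentwise $L^p$/Orlicz assumptions \textbf{H4} directly usable when $\fE^i$ is analyzed probabilistically in the subsequent results.
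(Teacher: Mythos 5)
Your proposal is correct and follows essentially the same route as the paper's proof: the paper invokes a cited lemma for the bound $W_p(\widehat P_{\xi_\ell}^N,P_{\xi_\ell}^N)\le\big(\tfrac1N\sum_i\|e_\ell^i\|^p\big)^{1/p}$ (which is exactly the identity-coupling bound you construct by hand), then splits $e_\ell^i$ into the internal and measurement contributions via \eqref{error:formula}, applies the same $\infty$-to-Euclidean norm conversions and $\|\cdot\|\le\|\cdot\|_1$ step, and finishes with $(a+b)^p\le 2^{p-1}(a^p+b^p)$ and subadditivity of $t\mapsto t^{1/p}$. The only cosmetic difference is that you bound the deterministic part by $\fM_w$ before applying the convexity inequality rather than after, which changes nothing.
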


The next result gives bounds for the norms of the random
variables~$\fE^i$ in Lemma~\ref{lemma:empirical:vs:true}.

\begin{lemma}\longthmtitle{Orlicz- \& $L^p$-norm bounds for
		$\fE^i$}\label{lemma:Orlicz:bounds}
	The random variables $\fE^i$ in~\eqref{frak:Eis} satisfy
	\begin{subequations}
		\begin{align}
			\|\fE^i\|_p\le\fM_v & :=M_vr\sum_{k=1}^\ell
			\|\Psi_{\ell,\ell-k+1}K_{\ell-k}\|, \label{frakMv}
			\\
			\|\fE^i\|_{\psi_p}\le\fC_v & := 
			C_vr\sum_{k=1}^\ell\|\Psi_{\ell,\ell-k+1}K_{\ell-k}\| ,
			\label{frakCv}
			\\
			\|\fE^i\|_p\ge\fm_v & :=m_vr^{\frac{1}{p}}\bigg(\sum_{k=1}^\ell
			\|\Psi_{\ell,\ell-k+1}K_{\ell-k}\|^p\bigg)^{\frac{1}{p}}, \label{frakmv}
		\end{align}
	\end{subequations}
	with $m_v$, $M_v$, and $C_v$ as given in \textbf{H4}.
\end{lemma}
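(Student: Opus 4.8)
The plan is to prove the three estimates one at a time, handling the two upper bounds \eqref{frakMv}--\eqref{frakCv} in parallel and the lower bound \eqref{frakmv} separately. Throughout I abbreviate $a_k:=\|\Psi_{\ell,\ell-k+1}K_{\ell-k}\|\ge 0$, so that expanding the $1$-norm in \eqref{frak:Eis} exhibits $\fE^i$ as a nonnegative linear combination of the absolute values of the scalar noise components,
\begin{align*}
	\fE^i = \sum_{k=1}^\ell a_k\,\|v_{\ell-k}^i\|_1 = \sum_{k=1}^\ell\sum_{l=1}^r a_k\,|v_{\ell-k,l}^i|.
\end{align*}

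For the upper bounds, I would invoke the triangle inequality and positive homogeneity of the two norms involved. Both the $L^p$ norm (by Minkowski) and the $\psi_p$-Orlicz norm (which is a genuine norm, since $\psi_p$ is convex) satisfy $\|\sum_j Y_j\|\le\sum_j\|Y_j\|$ and $\|a_kY\|=a_k\|Y\|$ for $a_k\ge 0$, and both are invariant under replacing $Y$ by $|Y|$. Applied termwise to the display above, this gives $\|\fE^i\|_p\le\sum_{k=1}^\ell\sum_{l=1}^r a_k\|v_{\ell-k,l}^i\|_p$ and the analogous bound with $\|\cdot\|_{\psi_p}$. The per-component estimates $\|v_{\ell-k,l}^i\|_p\le M_v$ and $\|v_{\ell-k,l}^i\|_{\psi_p}\le C_v$ from \textbf{H4}, summed over the $r$ components, then produce the factors $M_vr$ and $C_vr$, yielding \eqref{frakMv} and \eqref{frakCv}.

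The lower bound \eqref{frakmv} is the more delicate step, and the one I expect to be the main obstacle, since here a sum of $\ell r$ nonnegative random variables must be bounded below rather than above. The key tool is the elementary superadditivity inequality $(\sum_j y_j)^p\ge\sum_j y_j^p$, valid for nonnegative reals and any $p\ge 1$; the subtlety is to use it in the correct direction (for $p\le 1$ the reverse, subadditive, inequality would hold instead) and to route the estimate through $p$-th moments so that the coefficients enter as $a_k^p$. Applying it pointwise to $\fE^i$ and taking expectations, which uses only linearity and hence requires no independence, gives $\bE[(\fE^i)^p]\ge\sum_{k=1}^\ell a_k^p\sum_{l=1}^r\|v_{\ell-k,l}^i\|_p^p$. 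The lower bound $\|v_{\ell-k,l}^i\|_p\ge m_v$ from \textbf{H4}, applied to each of the $r$ components, bounds the inner sum below by $r\,m_v^p$, whence $\bE[(\fE^i)^p]\ge r\,m_v^p\sum_{k=1}^\ell a_k^p$; taking $p$-th roots delivers \eqref{frakmv}.
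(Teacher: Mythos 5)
Your proposal is correct and follows essentially the same route as the paper's proof: the two upper bounds via the triangle inequality and positive homogeneity of the $L^p$ and $\psi_p$-Orlicz norms applied termwise together with \textbf{H4}, and the lower bound via the pointwise superadditivity $(\sum_j y_j)^p\ge\sum_j y_j^p$ followed by linearity of the expectation and the lower bound in \textbf{H4}. The only cosmetic difference is that the paper groups the $r$ components of $v_{\ell-k}^i$ within each summand $k$ before summing over $k$ in the Orlicz estimate, whereas you expand directly into the double sum over $k$ and $l$.
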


The proofs of both results above are given in the Appendix. We further
rely on the following concentration of measure result around the mean
of nonnegative independent random variables, whose proof is also in
the Appendix, to bound the term $\big(\frac{1}{N} \sum_{i=1}^N
(\fE^i)^p \big)^\frac{1}{p}$, and control the Wasserstein distance
between the true- and the estimator-based-empirical distribution.

\begin{proposition}\longthmtitle{Concentration around $p$th mean}
	\label{prop:concentration:around:mean}
	Let $X_1,\ldots,X_N$ be scalar, nonnegative, independent random
	variables with finite $\psi_p$ norm and $\bE[X_i^p]=1$. Then,
	\begin{align}
		\bP\bigg(\bigg(\frac{1}{N}\sum_{i=1}^NX_i^p\bigg)^{\frac{1}{p}}-1\ge
		t\bigg) \le
		2\exp\Big(-\frac{c'N}{R^2}\alpha_p(t)\Big), \label{norm:concentration}
	\end{align} 
	for every $t\ge 0$, with $c'=1/10$,
	$R:=\max_{i\in[1:N]}\|X_i\|_{\psi_p}+1/\ln 2$, and
	\begin{align} \label{function:alphap} \alpha_p(s):=
		\begin{cases}
			s^2,\;\textup{if}\;s\in[0,1],
			\\
			s^p,\;\textup{if}\;s\in(1,\infty).
		\end{cases}
	\end{align}
\end{proposition}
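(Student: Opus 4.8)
The plan is to reduce the statement to a Chernoff bound for a sum of independent sub-exponential random variables and then translate the resulting two-regime tail into the claimed function $\alpha_p$. Since $x\mapsto x^{1/p}$ is increasing, the event $\big(\frac{1}{N}\sum_{i=1}^N X_i^p\big)^{1/p}-1\ge t$ is equivalent to $\sum_{i=1}^N(X_i^p-1)\ge Ng(t)$, where $g(t):=(1+t)^p-1$. Writing $Z_i:=X_i^p-\bE[X_i^p]=X_i^p-1$, the $Z_i$ are independent and mean-zero, so the task becomes a one-sided deviation bound for $\sum_{i=1}^N Z_i$ at level $Ng(t)$.

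The next step records the Orlicz bookkeeping that produces $R$. Because $\psi_1(x^p/t^p)=\psi_p(|x|/t)$ pointwise, one has $\|X_i^p\|_{\psi_1}=\|X_i\|_{\psi_p}^p$, so $X_i^p$ is sub-exponential; centering together with the triangle inequality for the $\psi_1$ norm gives $\|Z_i\|_{\psi_1}\le\|X_i\|_{\psi_p}^p+\|1\|_{\psi_1}$, where $\|1\|_{\psi_1}=1/\ln 2$ is exactly the additive term appearing in $R$. I would then invoke the standard sub-exponential moment generating function estimate: for suitable absolute constants, $\bE[e^{\lambda Z_i}]\le\exp(C\lambda^2\nu_i^2)$ for all $0\le\lambda\le c_0/\nu_i$, where $\nu_i$ is the relevant $\psi_1$-scale controlled by $R$. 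Multiplying over the independent realizations and applying Markov's inequality to $e^{\lambda\sum_{i=1}^N Z_i}$ yields $\bP\big(\sum_{i=1}^N Z_i\ge Ng(t)\big)\le\exp(-\lambda Ng(t)+C\lambda^2 N\nu^2)$ on the admissible range of $\lambda$.

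The core of the argument is the optimization over $\lambda$, which splits into the two regimes encoded by $\alpha_p$. Balancing the two terms, the unconstrained optimum applies while $g(t)$ is small and produces a sub-Gaussian exponent of order $Ng(t)^2/\nu^2$; once $g(t)$ is large the constraint $\lambda\le c_0/\nu$ binds and one takes $\lambda=c_0/\nu$, producing the linear exponent of order $Ng(t)/\nu$. To match $\alpha_p$ I would use elementary envelopes for $g$: Bernoulli's inequality gives $g(t)\ge pt$, while on $[0,1]$ the estimate $g(t)=\int_0^t p(1+s)^{p-1}ds\le p2^{p-1}t$ shows $g(t)\asymp t$, so the sub-Gaussian branch contributes the $t^2$ behaviour; for $t>1$ the bounds $t^p\le g(t)\le(2t)^p$ give $g(t)\asymp t^p$, so the linear branch contributes the $t^p$ behaviour. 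Finally, since $R\ge 1$, I would uniformize the denominators to $R^2$ in both regimes (weakening the linear-regime $1/R$ to $1/R^2$ only enlarges the bound), collect the explicit constants to reach $c'=1/10$, and note that the leading factor $2$ absorbs the passage through the Orlicz MGF estimate.

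I expect the delicate point to be the sharp dependence on $R$ rather than the regime split itself. Tracking the $\psi_p$-to-$\psi_1$ passage naively inflates the effective variance by a power $R^{2p}$ through the second moment $\bE[X_i^{2p}]$, so landing on the stated $R^2$ scaling forces one to exploit the normalization $\bE[X_i^p]=1$ jointly with the $p$-th-root structure of the left-hand side, rather than applying an off-the-shelf Bernstein inequality to $\sum_i X_i^p$. Carrying the constants through this tighter moment-generating-function bound, so as to arrive at $c'=1/10$ and the additive $1/\ln 2$ in $R$, is where the real work lies, and it is precisely the refinement over the generic concentration inequality that the general light-tailed result is meant to supply.
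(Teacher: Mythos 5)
Your route is, at its core, the paper's route: center the $p$th powers $Z_i=X_i^p-1$, treat them as mean-zero sub-exponential variables, invoke a Bernstein/Chernoff bound with the two-regime exponent $\min\{u^2,u\}$, convert the $p$th-root deviation into a $p$th-power deviation at level at least $\max\{t,t^p\}$ (your $g(t)=(1+t)^p-1\ge\max\{t,t^p\}$ is exactly the content of the paper's ``Fact II''), observe that $\min\{u,u^2\}$ evaluated at $u=\max\{t,t^p\}$ is $\alpha_p(t)$ (the paper's ``Fact III''), and use $R\ge 1$ to put both regimes over the common denominator $R^2$. The Chernoff optimization you describe is precisely how the paper proves its Proposition on the Bernstein inequality (with $c'=1/10$ coming from the choice $\fa=5/4$ in the MGF bound), so none of that is a genuinely different path.

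The one place where your proposal does not close is the Orlicz bookkeeping that produces the \emph{stated} $R=\max_i\|X_i\|_{\psi_p}+1/\ln 2$: you correctly note that the change of variables $s=t^p$ in $\|X_i^p\|_{\psi_1}=\inf\{s>0\,|\,\bE[e^{X_i^p/s}]\le 2\}$ yields $\|X_i^p\|_{\psi_1}=\|X_i\|_{\psi_p}^p$, you correctly observe that this does not immediately give the claimed $R^2$ scaling, and then you explicitly defer ``the real work'' of reconciling the two. That deferral is a genuine gap: without a bound of the form $\|X_i^p-1\|_{\psi_1}\le R$ you have only proved the proposition with $R$ replaced by $\max_i\|X_i\|_{\psi_p}^p+1/\ln 2$, which is the \emph{larger} quantity here, since $\bE[X_i^p]=1$ forces $\|X_i\|_{\psi_p}\ge 1$ and hence $\|X_i\|_{\psi_p}^p\ge\|X_i\|_{\psi_p}$ for $p\ge1$; so the normalization does not rescue the naive computation in the favorable direction. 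For comparison, the paper closes this step in one line by reading the identity $\bE[\psi_1(X_i^p/t^p)]=\bE[\psi_p(X_i/t)]$ as giving $\|X_i^p\|_{\psi_1}=\|X_i\|_{\psi_p}$ and then applying the triangle inequality with $\|1\|_{\psi_1}=1/\ln 2$ --- your reading of that same identity is the more careful one, so you have put your finger on the weakest link of the argument rather than on a difficulty you could have resolved by other means. Everything else in your sketch (the exact event equivalence via $g(t)$, the envelopes $g(t)\ge pt$ and $g(t)\ge t^p$, the regime split) is sound; note only that the upper envelopes $g(t)\le p2^{p-1}t$ and $g(t)\le(2t)^p$ are not needed for the one-sided tail bound, since a larger deviation level only helps.
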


Combining the results above,
we obtain the main result of this section regarding the ambiguity center
difference.

\begin{proposition}\longthmtitle{Distance guarantee between true- 
		\& estimator-based-empirical distribution}\label{prop:hatepsN:guarantees} 
	Consider a confidence $1-\subscr{\beta}{ns}$ and let
	\begin{align}
		&
		\widehat\eps_N(\subscr{\beta}{ns}):=2^{\frac{p-1}{p}}\bigg(\fM_w+\fM_v+\fM_v
		\alpha_p^{-1}\bigg(\frac{\fR^2}{c'N}
		\ln\frac{2}{\subscr{\beta}{ns}}\bigg)\bigg), \label{hat:epsN}
	\end{align}
	with $\fM_w$, $\fM_v$ given by \eqref{frakMw}, \eqref{frakMv}, 
	\begin{align}
		\fR :=\fC_v/\fm_v+1/\ln 2, \label{constant:frakR}
	\end{align}
	and $\fC_v$, $\fm_v$ as in \eqref{frakCv}, \eqref{frakmv}. Then, 
	for all $(\bm z,\bm \omega)\in
	B_\infty^{Nd}(\rho_{\xi_0})\times B_\infty^{N\ell q}(\rho_w)$, we
	have
	\begin{align}\label{noise:guarantee}
		\bP\big(W_p(\widehat P_{\xi_{\ell}}^N(\bm z,\bm \omega,\bm
		v_{\ell}), P_{\xi_{\ell}}^N(\bm z,\bm \omega)) \le \widehat
		\eps_N(\subscr{\beta}{ns})\big)\ge 1-\subscr{\beta}{ns}.
	\end{align}
\end{proposition}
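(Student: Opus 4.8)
The plan is to start from the deterministic-plus-random decomposition of Lemma~\ref{lemma:empirical:vs:true}, which already isolates the only stochastic term, namely $\big(\frac{1}{N}\sum_{i=1}^N(\fE^i)^p\big)^{1/p}$, and then control this term in high probability via the concentration bound of Proposition~\ref{prop:concentration:around:mean}. Since $(\bm z,\bm\omega)$ are treated as deterministic, the sole randomness in each $\fE^i=\fE(\bm v^i)$ comes from the measurement noise $\bm v^i$ of realization $i$; by \textbf{H2} these noises are independent across $i$, so the $\fE^i$ are independent, nonnegative random variables.

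The key reduction is a normalization. Assuming the nondegenerate case $\fm_v>0$ (when $\fm_v=0$ the convention $\fC_v/\fm_v=0$ forces $\fE^i=0$ almost surely and the claim is immediate), I would set $X_i:=\fE^i/\|\fE^i\|_p$. By Lemma~\ref{lemma:Orlicz:bounds} we have $0<\fm_v\le\|\fE^i\|_p\le\fM_v$, so each $X_i$ is well defined, nonnegative, and satisfies $\bE[X_i^p]=1$; moreover $\|X_i\|_{\psi_p}=\|\fE^i\|_{\psi_p}/\|\fE^i\|_p\le\fC_v/\fm_v$, whence $R:=\max_i\|X_i\|_{\psi_p}+1/\ln 2\le\fR$. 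The upper bound $\|\fE^i\|_p\le\fM_v$ also yields $\big(\frac{1}{N}\sum_i(\fE^i)^p\big)^{1/p}\le\fM_v\big(\frac{1}{N}\sum_iX_i^p\big)^{1/p}$, so it suffices to concentrate $\big(\frac{1}{N}\sum_iX_i^p\big)^{1/p}$ around $1$.

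Next I would apply Proposition~\ref{prop:concentration:around:mean} to the $X_i$. Choosing the deviation level $t:=\alpha_p^{-1}\big(\frac{\fR^2}{c'N}\ln\frac{2}{\subscr{\beta}{ns}}\big)$---which is the quantity appearing in $\widehat\eps_N$ and is well defined because $\alpha_p$ in~\eqref{function:alphap} is continuous and strictly increasing on $[0,\infty)$---and using $R\le\fR$ together with the monotonicity of the exponential, the right-hand side of~\eqref{norm:concentration} is bounded by $2\exp\big(-\frac{c'N}{\fR^2}\alpha_p(t)\big)=\subscr{\beta}{ns}$. Hence, with probability at least $1-\subscr{\beta}{ns}$, we have $\big(\frac{1}{N}\sum_iX_i^p\big)^{1/p}\le 1+t$, and therefore $\big(\frac{1}{N}\sum_i(\fE^i)^p\big)^{1/p}\le\fM_v+\fM_v t$ on that event.

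Finally, substituting this high-probability bound into the estimate of Lemma~\ref{lemma:empirical:vs:true} gives, on the same event, $W_p(\widehat P_{\xi_\ell}^N,P_{\xi_\ell}^N)\le 2^{\frac{p-1}{p}}\big(\fM_w+\fM_v+\fM_v t\big)=\widehat\eps_N(\subscr{\beta}{ns})$, which is exactly the asserted guarantee. I expect the only real care---rather than genuine difficulty---to lie in the normalization bookkeeping: verifying that the uniform Orlicz/$L^p$ bounds of Lemma~\ref{lemma:Orlicz:bounds} for the \emph{non-identically} distributed $\fE^i$ let one simultaneously replace $R$ by $\fR$ and the per-sample scale $\|\fE^i\|_p$ by $\fM_v$ without breaking the closed form of $\widehat\eps_N$, and in handling the degenerate case $\fm_v=0$ consistently with the stated convention.
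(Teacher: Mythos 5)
Your proposal is correct and follows essentially the same route as the paper's proof: normalize $X_i=\fE^i/\|\fE^i\|_p$, apply Proposition~\ref{prop:concentration:around:mean}, replace $R$ by $\fR$ via the Orlicz/$L^p$ bounds of Lemma~\ref{lemma:Orlicz:bounds}, rescale by $\fM_v$, and substitute into Lemma~\ref{lemma:empirical:vs:true}. The only additions beyond the paper's argument are your explicit treatment of the degenerate case $\fm_v=0$ and the explicit appeal to \textbf{H2} for independence of the $\fE^i$, both of which are consistent with the paper's conventions.
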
  
\begin{proof}
	For each $i$, 
	the random variable $X_i:=\fE^i/{\|\fE^i\|_p}$ satisfies $\|X_i\|_p=1$. 
	Thus, we obtain from Proposition~\ref{prop:concentration:around:mean} that
	\begin{align*}
		\bP\bigg(\bigg(\frac{1}{N}\sum_{i=1}^N\bigg(\frac{\fE^i}{{\|\fE^i\|_p}}\bigg)^p\bigg)
		^{\frac{1}{p}}-1\ge t\bigg)\le 
		2\exp\Big(-\frac{c'N}{R^2}\alpha_p(t)\Big),
	\end{align*}
	where $R =
	\max_{i\in[1:N]}\big\|\fE^i/{\|\fE^i\|_p}\big\|_{\psi_p}+1/\ln2$. From
	\eqref{frakCv}, \eqref{frakmv}, and \eqref{constant:frakR}, we deduce
	$\fR\ge R$, and thus,
	\begin{align*}
		& 
		\bP\bigg(\bigg(\frac{1}{N}\sum_{i=1}^N\bigg(\frac{\fE^i}{{\|\fE^i\|_p}}\bigg)^p\bigg)
		^{\frac{1}{p}}-1\ge t\bigg)\le 
		2\exp\Big(-\frac{c'N}{\fR^2}\alpha_p(t)\Big).
	\end{align*}
	Now, it follows from~\eqref{frakMv} that 
	\begin{align*}
		\fM_v\bigg(\frac{1}{N}\sum_{i=1}^N
		\bigg(\frac{\fE^i}{{\|\fE^i\|_p}}\bigg)^p\bigg)^{\frac{1}{p}}-\fM_v\ge
		\bigg(\frac{1}{N}\sum_{i=1}^N(\fE^i)^p\bigg) ^{\frac{1}{p}}-\fM_v.
	\end{align*}
	Thus, we deduce
	\begin{align*}
		& 
		\bP\bigg(\bigg(\frac{1}{N}\sum_{i=1}^N(\fE^i)^p\bigg)^{\frac{1}{p}}-\fM_v	
		\ge\fM_v t\bigg) \le 2\exp\Big(-\frac{c'N}{\fR^2}\alpha_p(t)\Big),
	\end{align*} 
	or, equivalently, that 
	\begin{align}
		\bP\bigg(\bigg(\frac{1}{N}\sum_{i=1}^N(\fE^i)^p\bigg)^{\frac{1}{p}}\ge 
		\fM_v+s\bigg) \le 2\exp\Big(-\frac{c'N}{\fR^2}\alpha_p 
		\Big(\frac{s}{\fM_v}\Big)\Big). \label{deviation:from:mean}
	\end{align} 
	To establish~\eqref{noise:guarantee}, it suffices by
	Lemma~\ref{lemma:empirical:vs:true} to show that
	\begin{align*}
		& \bP\bigg( 2^{\frac{p-1}{p}}\fM_w+2^{\frac{p-1}{p}}\Big(\frac{1}{N} 
		\sum_{i=1}^N(\fE^i)^p\Big)^\frac{1}{p} \le \widehat 
		\eps_N(\subscr{\beta}{ns})\bigg)\ge 1-\subscr{\beta}{ns}.
	\end{align*}
	By the definition of $\widehat \eps_N$ and exploiting that it is strictly 
	decreasing with $\subscr{\beta}{ns}$, it suffices to prove
	\begin{align*}
		\bP\bigg(\Big(\frac{1}{N}\sum_{i=1}^N(\fE^i)^p\Big)^\frac{1}{p}< \fM_v 
		+\fM_v 
		\alpha_p^{-1}\Big(\frac{\fR^2}{c'N}\ln\frac{2}{\subscr{\beta}{ns}}\Big)\bigg)
		\ge 1-\subscr{\beta}{ns}.
	\end{align*}  
	Setting $\tau=\alpha_p^{-1}\big(\frac{\fR^2}{c'N}\ln 
	\frac{2}{\subscr{\beta}{ns}}\big)$, we equivalently need to show
	\begin{align*}
		\bP\bigg(\Big(\frac{1}{N}\sum_{i=1}^N(\fE^i)^p\Big)^\frac{1}{p}
		\ge \fM_v+\tau\fM_v\bigg)\le \subscr{\beta}{ns},
	\end{align*}
	which follows by \eqref{deviation:from:mean} with $s=\tau \fM_v$. 
\end{proof}

\subsection{Overall ambiguity set}

Here we combine the results from Sections~\ref{subsec:nominal:radius}
and~\ref{subsec:noise:radius} to obtain the ambiguity set of the state
distribution in the following result.

\begin{theorem}\longthmtitle{Ambiguity set under noisy dynamics
		\& observations}\label{thm:ambiguity:set:noise}
	Consider data collected from $N$ realizations of
	system~\eqref{eq:data} in accordance to
	Assumptions~\ref{samplig:assumption}
	and~\ref{assumption:distributiuon:class}, a confidence $1-\beta$,
	and let $\subscr{\beta}{nom}, \subscr{\beta}{ns}\in(0,1)$
	satisfying~\eqref{confidence:decomposition}.  Then, the
	guarantee~\eqref{ambiguity:set:confidence} holds, where
	$\psi_N(\beta)$ is given in~\eqref{psi:dfn} and its components
	$\eps_N(\subscr{\beta}{nom})\equiv
	\eps_N(\subscr{\beta}{nom},\rho_{\xi_\ell})$ and
	$\widehat\eps_N(\subscr{\beta}{ns})$ are given by~\eqref{epsN:dfn}
	and~\eqref{hat:epsN}, respectively, with
	\begin{align} \label{rho:ell}
		\rho_{\xi_\ell}:=\sqrt{d}\|\Phi_\ell\|\rho_{\xi_0}
		+\sqrt{q}\sum_{k=1}^\ell\|\Phi_{\ell,\ell-k+1}G_{\ell-k}\|\rho_w.
	\end{align} 
\end{theorem}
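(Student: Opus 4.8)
The plan is to bound the target distance by inserting the state empirical distribution $P_{\xi_\ell}^N$ and controlling each resulting piece with one of the two confidence factors in~\eqref{confidence:decomposition}. The triangle inequality for $W_p$ gives
\begin{align*}
	W_p(\widehat P_{\xi_\ell}^N, P_{\xi_\ell}) \le W_p(\widehat P_{\xi_\ell}^N, P_{\xi_\ell}^N) + W_p(P_{\xi_\ell}^N, P_{\xi_\ell}),
\end{align*}
so, recalling~\eqref{psi:dfn}, it suffices to control the first summand by $\widehat\eps_N(\subscr{\beta}{ns})$ and the second by $\eps_N(\subscr{\beta}{nom})$, jointly with probability at least $1-\beta$. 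I therefore introduce the events $E_{\textup{nom}} := \{W_p(P_{\xi_\ell}^N, P_{\xi_\ell}) \le \eps_N(\subscr{\beta}{nom})\}$ and $E_{\textup{ns}} := \{W_p(\widehat P_{\xi_\ell}^N, P_{\xi_\ell}^N) \le \widehat\eps_N(\subscr{\beta}{ns})\}$; by the display above, $E_{\textup{nom}} \cap E_{\textup{ns}}$ is contained in the event of~\eqref{ambiguity:set:confidence}, so the goal reduces to $\bP(E_{\textup{nom}} \cap E_{\textup{ns}}) \ge 1-\beta$.

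For the nominal factor, I would use that the states $\xi_\ell^i$ are i.i.d.\ with the compactly supported law $P_{\xi_\ell}$ (as already noted from \textbf{H1}, \textbf{H2}), so that Proposition~\ref{prop:nominal:radius} applies directly. The only point to check is that $\rho_{\xi_\ell}$ in~\eqref{rho:ell} is an admissible radius, i.e., that it upper bounds $\tfrac{1}{2}\diameter{\infty}{\supp{P_{\xi_\ell}}}$. This follows from the closed form~\eqref{pushed:forward:ditributiuon} together with \textbf{H3}: every realization of $\xi_0$ lies in $B_\infty^d(\rho_{\xi_0})$ and of $w_{\ell-k}$ in $B_\infty^q(\rho_w)$, and bounding $\|\cdot\|_\infty \le \|\cdot\|_2$ while passing through the induced norms $\|\Phi_\ell\|$ and $\|\Phi_{\ell,\ell-k+1}G_{\ell-k}\|$ yields $\supp{P_{\xi_\ell}} \subset B_\infty^d(\rho_{\xi_\ell})$, whence the half-diameter is at most $\rho_{\xi_\ell}$. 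Since $\eps_N(\beta,\cdot)$ is increasing, this conservative radius is valid and $\bP(E_{\textup{nom}}) \ge 1-\subscr{\beta}{nom}$.

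The noise factor is supplied by Proposition~\ref{prop:hatepsN:guarantees}, but only \emph{conditionally}: it bounds the probability of $E_{\textup{ns}}$ by $1-\subscr{\beta}{ns}$ for each fixed value $(\bm z, \bm\omega) \in B_\infty^{Nd}(\rho_{\xi_0}) \times B_\infty^{N\ell q}(\rho_w)$ of the initial conditions and internal noise. The main obstacle is to combine this conditional noise bound with the unconditional nominal bound, since the two events share the common randomness of the state samples and cannot be treated as independent. I would resolve this through the independence built into \textbf{H2}: the vectors $X := (\bm\xi_0, \bm w_\ell)$ and $Y := \bm v_\ell$ are independent, the event $E_{\textup{nom}}$ is $\sigma(X)$-measurable, and $\supp{X} \subset B_\infty^{Nd}(\rho_{\xi_0}) \times B_\infty^{N\ell q}(\rho_w)$ by \textbf{H3}. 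Writing $g(x,y) := \bone_{E_{\textup{nom}}}(x)\,\bone_{E_{\textup{ns}}}(x,y)$, Proposition~\ref{prop:hatepsN:guarantees} yields $\bE[g(x,Y)] = \bone_{E_{\textup{nom}}}(x)\,\bP(E_{\textup{ns}}\mid X=x) \ge (1-\subscr{\beta}{ns})\,\bone_{E_{\textup{nom}}}(x) =: k(x)$ for every $x\in\supp{X}$.

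Finally, Lemma~\ref{lemma:expectation:inequality} applied to this $g$ and $k$ gives
\begin{align*}
	\bP(E_{\textup{nom}}\cap E_{\textup{ns}}) = \bE[g(X,Y)] \ge \bE[k(X)] = (1-\subscr{\beta}{ns})\,\bP(E_{\textup{nom}}) \ge (1-\subscr{\beta}{ns})(1-\subscr{\beta}{nom}) = 1-\beta,
\end{align*}
where the last equality is~\eqref{confidence:decomposition}. Combined with the containment $E_{\textup{nom}}\cap E_{\textup{ns}} \subset \{W_p(\widehat P_{\xi_\ell}^N, P_{\xi_\ell}) \le \psi_N(\beta)\}$ from the triangle inequality, this establishes~\eqref{ambiguity:set:confidence} and completes the argument. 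The one subtlety to treat carefully is measurability of $(x,y)\mapsto g(x,y)$ and integrability, so that Lemma~\ref{lemma:expectation:inequality} is applicable; both are immediate here since $g$ is a bounded indicator and the Wasserstein distances depend measurably on the finitely many samples.
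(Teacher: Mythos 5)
Your proposal is correct and follows essentially the same route as the paper's proof: the same triangle-inequality decomposition of the radius via~\eqref{psi:dfn}, the same application of Proposition~\ref{prop:nominal:radius} for the nominal event (with the identical support bound $\rho_{\xi_\ell}$ from~\eqref{pushed:forward:ditributiuon} and \textbf{H3}), and the same use of Lemma~\ref{lemma:expectation:inequality} with $g$ the product of the two indicators and $k(x)=(1-\subscr{\beta}{ns})\bone_{E_{\textup{nom}}}(x)$ to fuse the conditional noise guarantee of Proposition~\ref{prop:hatepsN:guarantees} with the unconditional nominal one. No gaps.
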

\begin{proof}
	Due to \eqref{psi:dfn} and the triangle inequality for $W_p$,
	\begin{align*}
		& \{W_p(\widehat P_{\xi_{\ell}}^N,P_{\xi_{\ell}})\le\psi_N(\beta)\} 
		\supset 
		\{W_p(\widehat P_{\xi_{\ell}}^N,P_{\xi_{\ell}}^N)\le \widehat 
		\eps_N(\subscr{\beta}{ns})\} 
		 \\
		 & \hspace{20em}
		\cap\{W_p(P_{\xi_{\ell}}^N, 
		P_{\xi_{\ell}})\le\eps_N(\subscr{\beta}{nom},\rho_{\xi_\ell})\}. 
	\end{align*} 
	Thus, to show \eqref{ambiguity:set:confidence}, it suffices to show that 
	\begin{align} 
		\bE\Big[\ones_{\{W_p(\widehat P_{\xi_{\ell}}^N,P_{\xi_{\ell}}^N)-
			\widehat \eps_N(\subscr{\beta}{ns})\le 0\}}
		\times\ones_{\{W_p(P_{\xi_{\ell}}^N,
			P_{\xi_{\ell}})-\eps_N(\subscr{\beta}{nom},\rho_{\xi_\ell})\le
			0\}}\Big]\ge 1-\beta. \label{total:confidence:condition}
	\end{align} 
	We therefore exploit Lemma~\ref{lemma:expectation:inequality} with
	the random variable $X\equiv(\bm\xi_0,\bm w_{\ell})$, taking values
	in the compact set $K\equiv B_\infty^{Nd}(\rho_{\xi_0})\times
	B_\infty^{N\ell q}(\rho_w)$, the random variable $Y\equiv\bm
	v_\ell \in \Rat{N\ell r}$, and $g(X,Y)\equiv g(\bm{\xi_0},\bm
	w_{\ell},\bm v_{\ell})$, where
	\begin{align*}
		g(\bm{\xi_0},\bm w_{\ell},\bm v_{\ell}) & :=
		\ones_{\{W_p(P_{\xi_{\ell}}^N(\bm{\xi_0},\bm
			w_{\ell}),P_{\xi_{\ell}})-\eps_N(\subscr{\beta}{nom},\rho_{\xi_\ell})\le
			0\}}
		 \\
		 & \quad
		\times\ones_{\{W_p(\widehat P_{\xi_{\ell}}^N(\bm{\xi_0},\bm
			w_{\ell},\bm v_{\ell}),P_{\xi_{\ell}}^N(\bm{\xi_0},\bm
			w_{\ell}))-\widehat\eps_N(\subscr{\beta}{ns})\le 0\}}.
	\end{align*}
	Due to~\eqref{noise:guarantee}, $\bE\Big[\ones_{\{W_p(\widehat 
		P_{\xi_\ell}^N(\bm z,\bm \omega,\bm v_{\ell}), P_{\xi_{\ell}}^N(\bm z,\bm
		\omega))-\widehat\eps_N(\subscr{\beta}{ns})\le 0\}}\Big]\ge
	1-\subscr{\beta}{ns}$ for any $x=(\bm z,\bm\omega)\in K$
	and thus $ \bE
	[g(x,Y)]\ge\ones_{\{W_p(P_{\xi_\ell}^N(x),P_{\xi_\ell})
		-\eps_N(\subscr{\beta}{nom},\rho_{\xi_\ell})\le 0\}}\times
	(1-\subscr{\beta}{ns})=:k(x) $, for all $x\in K$. Hence, since
	$X\equiv(\bm\xi_0,\bm w_{\ell})$ and $Y\equiv\bm v_\ell$ are
	independent by \textbf{H2}, we deduce from
	Lemma~\ref{lemma:expectation:inequality} that
	\begin{align*}
		\bE [g(X,Y)] & \ge\bE\Big[\ones_{\{W_p(P_{\xi_\ell}^N
			(\bm\xi_0,\bm w_\ell),
			P_{\xi_\ell})-\eps_N(\subscr{\beta}{nom},\rho_{\xi_\ell})\le
			0\}}(1-\subscr{\beta}{ns})\Big]
		\\
		& = (1-\subscr{\beta}{ns})\bP(W_p(P_{\xi_\ell}^N(\bm\xi_0,\bm
		w_\ell),P_{\xi_\ell})\le\eps_N(\subscr{\beta}{nom},\rho_{\xi_\ell})).
	\end{align*}
	From \eqref{pushed:forward:ditributiuon} and \textbf{H3} in
	Assumption~\ref{assumption:distributiuon:class}, it follows that
	$P_{\xi_\ell}$ is supported on the compact set 
	$B_\infty^d(\rho_{\xi_\ell})$  with 
	$\diameter{\infty}{B_\infty^d(\rho_{\xi_\ell})} =
	2\rho_{\xi_\ell}$ and $\rho_{\xi_\ell}$ given in~\eqref{rho:ell}. In
	addition, due to \textbf{H1} and \textbf{H2} in
	Assumption~\ref{assumption:distributiuon:class} the random states
	$\xi_{\ell}^i$ in the empirical distribution $P_{\xi_\ell}^N
	(\bm\xi_0,\bm w_\ell)=\frac{1}{N}\sum_{i=1}^N\delta_{\xi_{\ell}^i}$
	are i.i.d.. Thus, 
	we get from Proposition~\ref{prop:nominal:radius} that
	$\bP(W_p(P_{\xi_\ell}^N (\bm\xi_0,\bm w_\ell),P_{\xi_\ell}) \le
	\eps_N(\subscr{\beta}{nom},\rho_{\xi_\ell}))\ge
	1-\subscr{\beta}{nom}$, which implies
	$
	\bE [g(X,Y)]\ge (1-\subscr{\beta}{ns})(1-\subscr{\beta}{nom})=1-\beta.
	$ 
	Finally,~\eqref{total:confidence:condition} follows from this and
	the definition of~$g$.
\end{proof}

With this result at hand, we deduce from the
expressions~\eqref{epsN:dfn} and \eqref{noise:guarantee} for the
components of the ambiguity radius that it decreases as we exploit a
larger number $N$ of independent trajectories and relax our confidence
choices, i.e., reduce $1-\subscr{\beta}{nom}$ and
$1-\subscr{\beta}{ns}$. Notice further that no matter how many
trajectories we use, the noise ambiguity radius decreases to a
strictly positive value.
%
It is also worth to observe that
$\psi_N$ generalizes the nominal ambiguity radius $\eps_N$ in the DRO
literature (even when dynamic random variables are
considered~\cite{DB-JC-SM:21-tac}) and reduces to $\eps_N$ in the
noise-free case where~$\widehat\eps_N=0$.

Drawing conclusions about how the ambiguity radius behaves as we
simultaneously allow the horizon $[0:\ell]$ and the number $N$ of
sampled trajectories to increase is a more delicate matter. The value
of the nominal component depends essentially on $N$ and the support of
the distribution at $\ell$, with the latter in turn depending on the
system's stability properties and the support of the initial condition
and internal noise distributions. On the other hand, the noise
component depends on $N$ and the quality of the estimation error. We
quantify in the next section how the latter guarantees uniform
boundedness of the noise radius under detectability-type assumptions.

\begin{remark}\longthmtitle{Positive lower bound of the noise radius}
	{\rm The positive lower bound $2^{\frac{p-1}{p}}(\fM_w+\fM_v)$ on
		the noise radius in~\eqref{hat:epsN} represents in general a
		fundamental limitation for the ambiguity set accuracy, which is
		independent of the number $N$ of estimated state samples. This is
		because the bound is related to the size of the state estimation
		error, which persists under the presence of noise and may further
		grow in time if there is no system detectability.}  \oprocend
\end{remark}

\begin{remark}\longthmtitle{Optimal radius selection} {\rm Once a
		desired confidence level $1-\beta$ and the number of independent
		trajectories $N$ are fixed, we can optimally select the ambiguity
		radius by minimizing the function
		\begin{align*}
			\subscr{\beta}{nom}\mapsto
			\psi_N(\subscr{\beta}{nom})\equiv\eps_N(\subscr{\beta}{nom})+\widehat\eps_N
			((\beta-\subscr{\beta}{nom})/(1-\subscr{\beta}{nom})),
		\end{align*}
		where we have taken into account the
		constraint~\eqref{confidence:decomposition} between the nominal
		and the noise confidence. This function is non-convex, but
		one-dimensional, and its minimizer is in the interior of the
		interval~$(0,\beta)$, so its optimal value can be approximated
		with high accuracy.}  \oprocend
\end{remark}

\subsection{Uncertainty quantification over bounded time horizons}

In this section we discuss how the guarantees can be extended to
scenarios where an ambiguity set is built over a finite-time horizon
instead of a single instance $\ell$. In this case we assume that
samples are collected over the time window $[0:\ell_2]$ and we seek to
build an ambiguity set about the state distribution along
$[\ell_1:\ell_2]$, with $0\le\ell_1\le\ell_2$. We distinguish between
two ambiguity set descriptions depending on the way the associated
probabilistic guarantees are obtained.  In the first, we directly
build an ambiguity set for the probability distribution of the random
vector $\bm
\xi_{\bm\ell}:=(\xi_{\ell_1},\ldots,\xi_{\ell_2})\in\Rat{\widetilde\ell
	d}$ with $\bm\ell:=(\ell_1,\ldots,\ell_2)$ and
$\widetilde\ell=\ell_2-\ell_1+1$, comprising of all states over the
interval of interest and using the concentration of measure result of
Proposition~\ref{prop:nominal:radius} for $\widetilde\ell
d$-dimensional random variables. This has the drawback that the
ambiguity radius decays slowly with the number of trajectories due to
the high dimension of $\bm\xi_{\bm\ell}$. The other description
derives an ambiguity set about the state distribution
$P_{\xi_{\ell_1}}$ at time $\ell_1$ with prescribed confidence, and
propagates it under the dynamics while taking into account the
possible values of the internal noise. We also present sharper results
for the cases when the internal noise sequence is known. 
The first ambiguity set description is provided by the
following analogue of Theorem~\ref{thm:ambiguity:set:noise}.

\begin{theorem}\longthmtitle{Ambiguity set over a bounded time
		horizon}
	\label{thm:ambiguity:over:horizon}
	Consider output data collected from $N$ realizations of
	system~\eqref{eq:data} over the interval $[0:\ell_2]$ and let
	Assumption~\ref{samplig:assumption} hold. Pick a confidence
	$1-\beta$, let $\subscr{\beta}{nom},\subscr{\beta}{ns}\in(0,1)$
	satisfying~\eqref{confidence:decomposition}, and consider the
	bounded-horizon estimator empirical distribution
	\begin{align*}
		\widehat P_{\bm\xi_{\bm\ell}}^N:=\frac{1}{N}\sum_{i=1}^N 
		\delta_{\widehat{\bm\xi}_{\bm\ell}^i}
	\end{align*}
	over the horizon $[\ell_1:\ell_2]$, where
	$\widehat{\bm\xi}_{\bm\ell}^i:=
	(\widehat{\xi}_{\ell_1}^i,\ldots,\widehat{\xi}_{\ell_2}^i)\in\Rat{\widetilde\ell
		d}$ and each $\widehat{\xi}_{\ell}^i$ is given by the observer
	\eqref{observer:dynamics}. Then
	\begin{align}
		\bP(W_p(\widehat P_{\bm\xi_{\bm\ell}}^N,P_{\bm\xi_{\bm\ell}}) 
		\le\psi_N(\beta))\ge 1-\beta
	\end{align} 
	holds, where $\bm\xi_{\bm\ell}:=(\xi_{\ell_1},\ldots,\xi_{\ell_2})$
	and $\psi_N(\beta)$ is given in~\eqref{psi:dfn}. The nominal
	component $\eps_N(\subscr{\beta}{nom})\equiv
	\eps_N(\subscr{\beta}{nom},\rho_{\bm\xi_{\bm\ell}})$ is given
	by~\eqref{epsN:dfn} (with $d$ in the expression substituted by
	$\widetilde\ell d$)
	\begin{align} \label{rhoxiell:stacked:states}
		\rho_{\bm\xi_{\bm\ell}}:=\max_{\ell\in[\ell_1:\ell_2]}\bigg\{\sqrt{d}\|\Phi_\ell\|\rho_{\xi_0}
		+\sqrt{q}\sum_{k=1}^\ell\|\Phi_{\ell,\ell-k+1}G_{\ell-k}\|\rho_w\bigg\},
	\end{align} 
	whereas $\widehat\eps_N(\subscr{\beta}{ns})$ is given as 
	\begin{align*}
		\widehat\eps_N(\subscr{\beta}{ns}) & :=2^{\frac{p-1}{p}}
		\bigg(\widetilde{\fM}_w+\widetilde{\fM}_v+\widetilde{\fM}_v
		\alpha_p^{-1}\bigg(\frac{\widetilde\fR^2}{c'N}
		\ln\frac{2}{\subscr{\beta}{ns}}\bigg)\bigg), \quad \text{with}
		\\
		\widetilde\fM_w & := \sum_{\ell=\ell_1}^{\ell_2}\fM_w(\ell),\qquad
		\widetilde\fM_v := \sum_{\ell=\ell_1}^{\ell_2}\fM_v(\ell), \\
		\widetilde\fR & :=
		\frac{\widetilde\fC_v}{\widetilde\fm_v}+\frac{1}{\ln 2}, \qquad
		\widetilde\fC_v:=\sum_{\ell=\ell_1}^{\ell_2}\fC_v(\ell),\qquad
		\widetilde\fm_v:=\sum_{\ell=\ell_1}^{\ell_2}\fm_v(\ell),
	\end{align*}
	and $\fM_w(\ell)\equiv\fM_w$, $\fM_v(\ell)\equiv\fM_v$, 
	$\fC_v(\ell)\equiv\fC_v$, and $\fm_v(\ell)\equiv\fm_v$, as given by 
	\eqref{frakMw}, \eqref{frakMv}, \eqref{frakCv}, and \eqref{frakmv}, 
	respectively. 
\end{theorem}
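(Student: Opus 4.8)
The plan is to follow the architecture of the proof of Theorem~\ref{thm:ambiguity:set:noise} essentially verbatim, treating the stacked random vector $\bm\xi_{\bm\ell}=(\xi_{\ell_1},\ldots,\xi_{\ell_2})\in\Rat{\widetilde\ell d}$ as the single object of interest, now in dimension $\widetilde\ell d$. Concretely, I would keep the confidence split~\eqref{confidence:decomposition} and the radius split $\psi_N(\beta)=\eps_N(\subscr{\beta}{nom})+\widehat\eps_N(\subscr{\beta}{ns})$, bound the distance from the true distribution $P_{\bm\xi_{\bm\ell}}$ to the true-state empirical distribution $P_{\bm\xi_{\bm\ell}}^N$ by the nominal term, bound the distance from $P_{\bm\xi_{\bm\ell}}^N$ to the estimator-based $\widehat P_{\bm\xi_{\bm\ell}}^N$ by the noise term, and glue the two events together through the triangle inequality for $W_p$ combined with Lemma~\ref{lemma:expectation:inequality} and the independence hypothesis \textbf{H2}. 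The final gluing step is then a line-by-line transcription of the proof of Theorem~\ref{thm:ambiguity:set:noise}, with $(\bm z,\bm\omega)$ again viewed as deterministic in the support box $B_\infty^{Nd}(\rho_{\xi_0})\times B_\infty^{N\ell_2 q}(\rho_w)$ and $\bm v$ playing the role of the independent variable $Y$.

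For the nominal term I would invoke Proposition~\ref{prop:nominal:radius} with $d$ replaced by $\widetilde\ell d$, applied to the i.i.d.\ samples $\bm\xi_{\bm\ell}^1,\ldots,\bm\xi_{\bm\ell}^N$ (whose independence and identical distribution again follow from \textbf{H1}--\textbf{H2}). The only genuinely new computation here is the support radius. Since the $\infty$-norm of a stacked vector equals the maximum of the $\infty$-norms of its blocks, and each block $\xi_\ell$ is supported in $B_\infty^d(\rho_{\xi_\ell})$ with $\rho_{\xi_\ell}$ as in~\eqref{rho:ell} (this is exactly the support computation already performed in Theorem~\ref{thm:ambiguity:set:noise} from~\eqref{pushed:forward:ditributiuon} and \textbf{H3}), the support of $\bm\xi_{\bm\ell}$ lies in $B_\infty^{\widetilde\ell d}(\rho_{\bm\xi_{\bm\ell}})$ with $\rho_{\bm\xi_{\bm\ell}}=\max_{\ell\in[\ell_1:\ell_2]}\rho_{\xi_\ell}$, which is precisely~\eqref{rhoxiell:stacked:states}. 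This yields $\bP(W_p(P_{\bm\xi_{\bm\ell}}^N,P_{\bm\xi_{\bm\ell}})\le\eps_N(\subscr{\beta}{nom},\rho_{\bm\xi_{\bm\ell}}))\ge 1-\subscr{\beta}{nom}$.

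For the noise term I would prove horizon analogues of Lemmas~\ref{lemma:empirical:vs:true} and~\ref{lemma:Orlicz:bounds} and then re-run Proposition~\ref{prop:hatepsN:guarantees} with the tilded constants. Using the diagonal coupling, $W_p(\widehat P_{\bm\xi_{\bm\ell}}^N,P_{\bm\xi_{\bm\ell}}^N)\le(\frac1N\sum_i\|\bm e_{\bm\ell}^i\|^p)^{1/p}$, where $\bm e_{\bm\ell}^i=(e_{\ell_1}^i,\ldots,e_{\ell_2}^i)$. Applying the elementary inequality $\|\cdot\|_2\le\|\cdot\|_1$ to the vector of block-norms gives $\|\bm e_{\bm\ell}^i\|\le\sum_{\ell=\ell_1}^{\ell_2}\|e_\ell^i\|$, and the per-time splitting of~\eqref{error:formula} into its $\xi_0,w$-part (bounded deterministically by $\fM_w(\ell)$ over the support box) and its $v$-part $\fE^i(\ell)$ gives $\|\bm e_{\bm\ell}^i\|\le\widetilde\fM_w+\widetilde\fE^i$ with $\widetilde\fE^i:=\sum_\ell\fE^i(\ell)$; this is the horizon analogue of~\eqref{empirical:true:vs:estimated:bound1}. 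The upper norm bounds $\|\widetilde\fE^i\|_p\le\widetilde\fM_v$ and $\|\widetilde\fE^i\|_{\psi_p}\le\widetilde\fC_v$ then follow directly from Minkowski's inequality for the sum over $\ell$, and with these the concentration argument of Proposition~\ref{prop:hatepsN:guarantees} proceeds unchanged, producing the stated $\widehat\eps_N(\subscr{\beta}{ns})$.

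The main obstacle is the lower bound $\widetilde\fm_v$ controlling the constant $\widetilde\fR=\widetilde\fC_v/\widetilde\fm_v+1/\ln 2$: the concentration step requires $\widetilde\fR\ge\max_i\|\widetilde\fE^i\|_{\psi_p}/\|\widetilde\fE^i\|_p+1/\ln 2$, so $\widetilde\fm_v$ must be a \emph{genuine} lower bound for every $\|\widetilde\fE^i\|_p$. Here Minkowski runs the wrong way, and there is no reverse triangle inequality for sums of independent nonnegative variables, so one cannot simply add the single-time quantities $\fm_v(\ell)$. The clean route is to aggregate the noise contributions first, writing $\widetilde\fE^i=\sum_{j,l}D_{j}\,|v_{j,l}^i|$ with $D_j$ the accumulated gain weights, and then use the pointwise superadditivity $(\sum_m a_m)^p\ge\sum_m a_m^p$ for nonnegative $a_m$ and $p\ge1$ to get $\bE[(\widetilde\fE^i)^p]\ge m_v^p r\sum_j D_j^p$, i.e.\ an honest lower bound of the form $m_v r^{1/p}(\sum_j D_j^p)^{1/p}$. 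Since $\|\cdot\|_p\le\|\cdot\|_1$ at the level of the gain sequences, this accumulated bound is no larger than the naive sum $\sum_\ell\fm_v(\ell)$, so I would verify carefully whether the intended $\widetilde\fm_v$ is in fact this $\ell^p$-aggregated quantity rather than the $\ell^1$ sum, and fix its definition so that $\widetilde\fR\ge R$ holds. Once a valid $\widetilde\fm_v$ is in place, replacing $R$ by $\widetilde\fR$ only weakens the exponent in Proposition~\ref{prop:concentration:around:mean}, and the remainder of the argument, including the final gluing via Lemma~\ref{lemma:expectation:inequality}, is identical to that of Theorem~\ref{thm:ambiguity:set:noise}.
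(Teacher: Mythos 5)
Your proposal retraces the paper's own proof, which is itself only a sketch deferring back to Theorem~\ref{thm:ambiguity:set:noise}: the nominal radius comes from Proposition~\ref{prop:nominal:radius} in dimension $\widetilde\ell d$ with support radius $\max_{\ell\in[\ell_1:\ell_2]}\rho_{\xi_\ell}$ (via $\|\bm\xi_{\bm\ell}\|_\infty\le\max_\ell\|\xi_\ell\|_\infty$), the noise radius from $\|\bm e_{\bm\ell}^i\|\le\sum_{\ell=\ell_1}^{\ell_2}\|e_\ell^i\|\le\widetilde\fM(z^i,\bm\omega^i)+\widetilde\fE^i$ followed by the concentration argument of Proposition~\ref{prop:hatepsN:guarantees} with the tilded constants, and the final gluing via Lemma~\ref{lemma:expectation:inequality} is verbatim. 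All of this matches the paper.

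Your reservation about $\widetilde\fm_v$ is justified, and it is precisely the one point the paper's sketch does not verify: it merely asserts, ``in exact analogy'' to Lemma~\ref{lemma:Orlicz:bounds}, that $\widetilde\fm_v=\sum_{\ell}\fm_v(\ell)$ is a lower bound for each $\|\widetilde\fE^i\|_p$. Running that argument honestly, one first collects $\widetilde\fE^i=\sum_{j,l}D_j|v_{j,l}^i|$ with accumulated weights $D_j:=\sum_{\ell=\max\{\ell_1,j+1\}}^{\ell_2}\|\Psi_{\ell,j+1}K_j\|$, and superadditivity yields $\|\widetilde\fE^i\|_p\ge m_vr^{1/p}\big(\sum_jD_j^p\big)^{1/p}$, which by Minkowski is \emph{at most} $\sum_\ell\fm_v(\ell)$ --- the wrong direction for the claim. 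The two coincide for $p=1$ (expectations add), but for $p>1$ one can exhibit admissible gains and noise (e.g., a near-deadbeat observer with i.i.d.\ non-degenerate $v_k$, so that consecutive $\fE^i(\ell)$ are essentially independent) for which $\|\widetilde\fE^i\|_p<\sum_\ell\fm_v(\ell)$, and then the stated $\widetilde\fR$ need not dominate the quantity $R$ required by Proposition~\ref{prop:concentration:around:mean}. Your fix --- defining $\widetilde\fm_v$ as the $\ell^p$-aggregate $m_vr^{1/p}\big(\sum_jD_j^p\big)^{1/p}$ of the accumulated gains --- is the right one: it only enlarges $\widetilde\fR$ and hence the radius, and leaves the remainder of the argument untouched.
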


The proof of this result follows the argumentation employed for the
proof of Theorem~\ref{thm:ambiguity:set:noise} (a sketch can be found
in the Appendix). For the second ambiguity set description we use a 
pointwise-in-time approach. To this
end, we build a family of ambiguity balls so that under the same
confidence level the state distribution at each time instant of the
horizon lies in the associated ball, i.e.,
\begin{align}\label{pontwise:ambiguity:sets}
	\bP\big(P_{\xi_{\ell}}\in \B_{\psi_{N,\ell}}(\widetilde 
	P_{\xi_\ell}^N)\;\forall\ell\in[\ell_1:\ell_2]\big)\ge 1-\beta,
\end{align}
where ${\B}_{\psi_{N,\ell}}(\widetilde P_{\xi_\ell}^N\big):=
\{P\in\P_p(\Rat{d})\,|\,W_p(P,\widetilde
P_{\xi_\ell}^N)\le\psi_{N,\ell}\}$ {and $\widetilde P_{\xi_\ell}^N$ is
	the center of the ball}. This is well suited for stochastic
optimization problems that have a separable structure with respect to
the stochastic argument across different time instances, i.e.,
problems of the form
\begin{align*}
	\inf_{x\in\X}\bE\big[f_1(x,\xi_{\ell_1})+\cdots
	+f_{\widetilde\ell}(x,\xi_{\ell_2})\big].
\end{align*}       
The pointwise ambiguity sets are quantified in the following result.
%

\begin{theorem}\longthmtitle{Pointwise ambiguity sets over a bounded
		time horizon}\label{thm:pointwise:ambiguity:set}
Let the assumptions of Theorem~\ref{thm:ambiguity:over:horizon} hold, 
assume that the internal noise sequence $w_\ell$  is independent (also of 
the initial state), $P_{w_\ell}\in\P_p(\Rat{d})$ for 
$\ell\in[\ell_1:\ell_2]$, i.e., it is not necessarily compactly supported, 
and consider either of the following two cases for its distribution 
when $\ell\in[\ell_1:\ell_2]$:
\begin{enumerate}
\item $P_{w_\ell}$ is not known and 
$\bE\big[\|w_\ell\|^p\big]^{\frac{1}{p}}\le q_w$.
\item $P_{w_\ell}$ is known.
\end{enumerate}
Then, for any confidence $1-\beta$, and
$\subscr{\beta}{nom},\subscr{\beta}{ns}\in(0,1)$
satisfying~\eqref{confidence:decomposition},
\eqref{pontwise:ambiguity:sets} holds, with $\widetilde
P_{\xi_{\ell_1}}^N:=\widehat P_{\xi_{\ell_1}}^N$ and
$\psi_{N,\ell_1}$ as given by Theorem~\ref{thm:ambiguity:set:noise}
(for $\ell\equiv\ell_1$), and $\widetilde P_{\xi_{\ell}}^N$,
$\psi_{N,\ell}$, $\ell\in[\ell_1+1:\ell_2]$ defined as follows for
the respective two cases above:
\begin{enumerate}
	\item The ambiguity set center is $\widetilde
	P_{\xi_\ell}^N:=\frac{1}{N}\sum_{i=1}^N\delta_{\widetilde\xi_\ell^i}$
	with
	$\widetilde\xi_\ell^i:=\Phi_{\ell,\ell_1}\widehat\xi_{\ell_1}$
	and the radius is given recursively by
	$\psi_{N,\ell}:=\|A_{\ell-1}\|\psi_{N,\ell-1}+q_w$.
	\item The ambiguity set center is $\widetilde
	P_{\xi_\ell}^N:=\big((A_{\ell-1})_\#\widetilde
	P_{\xi_{\ell-1}}^N\big)\star P_{w_{\ell-1}}$ and the radius is
	$\psi_{N,\ell}:=\|A_{\ell-1}\|\cdots\|A_{\ell_1}\|\psi_{N,\ell_1}$.
\end{enumerate}
\end{theorem}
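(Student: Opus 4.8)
The plan is to reduce the simultaneous (pointwise-in-time) guarantee to a \emph{single} probabilistic event at the base time $\ell_1$, and then propagate it deterministically forward in time using contraction-type properties of the Wasserstein distance. Concretely, set $E:=\{W_p(\widehat P_{\xi_{\ell_1}}^N,P_{\xi_{\ell_1}})\le \psi_{N,\ell_1}\}$. Since $\widetilde P_{\xi_{\ell_1}}^N=\widehat P_{\xi_{\ell_1}}^N$ and $\psi_{N,\ell_1}$ is taken from Theorem~\ref{thm:ambiguity:set:noise} (for $\ell\equiv\ell_1$), that theorem already gives $\bP(E)\ge 1-\beta$. I would then show that, \emph{on the event $E$ and for every realization}, the bound $W_p(\widetilde P_{\xi_\ell}^N,P_{\xi_\ell})\le\psi_{N,\ell}$ holds for all $\ell\in[\ell_1:\ell_2]$, so that $E\subset\bigcap_{\ell}\{P_{\xi_\ell}\in\B_{\psi_{N,\ell}}(\widetilde P_{\xi_\ell}^N)\}$ and hence \eqref{pontwise:ambiguity:sets} follows with confidence $\bP(E)\ge1-\beta$. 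The point is that no union bound over time is needed: the confidence is ``paid'' once, at $\ell_1$, and then carried forward deterministically.

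The deterministic engine consists of two elementary Wasserstein estimates that I would record first. (i) \emph{Linear push-forward}: for $A\in\Rat{d\times d}$ and $\mu,\nu\in\P_p(\Rat{d})$, pushing an optimal coupling through $x\mapsto Ax$ gives $W_p(A_\#\mu,A_\#\nu)\le\|A\|\,W_p(\mu,\nu)$. (ii) \emph{Convolution}: for $\eta\in\P_p(\Rat{d})$, coupling $(X,Y)\sim\pi$ with an independent $W\sim\eta$ through $(X+W,Y+W)$ shows both $W_p(\mu\star\eta,\nu\star\eta)\le W_p(\mu,\nu)$ and, with $\nu=\mu$ and $Y=X$, $W_p(\mu,\mu\star\eta)\le\big(\bE_\eta[\|w\|^p]\big)^{1/p}$. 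I would also identify the true-distribution recursion: since $\xi_\ell=A_{\ell-1}\xi_{\ell-1}+w_{\ell-1}$ with $w_{\ell-1}$ independent of $\xi_{\ell-1}$, one has $P_{\xi_\ell}=(A_{\ell-1})_\#P_{\xi_{\ell-1}}\star P_{w_{\ell-1}}$.

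With these in hand, each case is a one-step induction. In \textbf{Case 2} ($P_{w_{\ell-1}}$ known) the center obeys $\widetilde P_{\xi_\ell}^N=(A_{\ell-1})_\#\widetilde P_{\xi_{\ell-1}}^N\star P_{w_{\ell-1}}$, so convolving both arguments with the \emph{same} $P_{w_{\ell-1}}$ and then using (ii) and (i) yields $W_p(\widetilde P_{\xi_\ell}^N,P_{\xi_\ell})\le W_p((A_{\ell-1})_\#\widetilde P_{\xi_{\ell-1}}^N,(A_{\ell-1})_\#P_{\xi_{\ell-1}})\le\|A_{\ell-1}\|\,W_p(\widetilde P_{\xi_{\ell-1}}^N,P_{\xi_{\ell-1}})$, which telescopes to the product radius $\psi_{N,\ell}=\|A_{\ell-1}\|\cdots\|A_{\ell_1}\|\psi_{N,\ell_1}$. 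In \textbf{Case 1} ($P_{w_{\ell-1}}$ unknown) the center only propagates the nominal dynamics, $\widetilde\xi_\ell^i=\Phi_{\ell,\ell_1}\widehat\xi_{\ell_1}^i=A_{\ell-1}\widetilde\xi_{\ell-1}^i$, i.e.\ $\widetilde P_{\xi_\ell}^N=(A_{\ell-1})_\#\widetilde P_{\xi_{\ell-1}}^N$ with no convolution. Here I would insert the noise via the triangle inequality, $W_p(\widetilde P_{\xi_\ell}^N,P_{\xi_\ell})\le W_p((A_{\ell-1})_\#\widetilde P_{\xi_{\ell-1}}^N,(A_{\ell-1})_\#P_{\xi_{\ell-1}})+W_p((A_{\ell-1})_\#P_{\xi_{\ell-1}},(A_{\ell-1})_\#P_{\xi_{\ell-1}}\star P_{w_{\ell-1}})$; by (i) the first term is $\le\|A_{\ell-1}\|W_p(\widetilde P_{\xi_{\ell-1}}^N,P_{\xi_{\ell-1}})$ and by (ii) together with $\bE[\|w_{\ell-1}\|^p]^{1/p}\le q_w$ the second is $\le q_w$, giving the recursion $\psi_{N,\ell}=\|A_{\ell-1}\|\psi_{N,\ell-1}+q_w$.

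I expect the substantive issue to be the probabilistic bookkeeping rather than the Wasserstein algebra: I must be careful that every propagation inequality above holds \emph{pathwise} (for each fixed realization of the underlying data, hence as an inclusion of events), so that the single event $E$ indeed forces all the containments simultaneously and the confidence $1-\beta$ is not eroded across the horizon. A secondary point to verify cleanly is the true-distribution recursion $P_{\xi_\ell}=(A_{\ell-1})_\#P_{\xi_{\ell-1}}\star P_{w_{\ell-1}}$, which is exactly where the independence of the noise sequence from the state---assumed in this theorem, in contrast to the merely compactly-supported setting allowed earlier---is used; without it the convolution identity, and hence the whole forward recursion, would fail.
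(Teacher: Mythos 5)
Your proposal is correct and follows essentially the same route as the paper's own proof: a single probabilistic event at $\ell_1$ supplied by Theorem~\ref{thm:ambiguity:set:noise}, propagated deterministically (pathwise) through an induction on $\ell$ via the identity $P_{\xi_{\ell+1}}=\big((A_\ell)_\#P_{\xi_\ell}\big)\star P_{w_\ell}$, the Lipschitz push-forward bound, and the convolution estimates of Lemma~\ref{lemma:Wasserstein:convolution}, so that no union bound over the horizon is needed. The only cosmetic differences are that you derive the case-(i) step by a triangle inequality together with $W_p(\mu,\mu\star\eta)\le q_w$ instead of quoting the second claim of Lemma~\ref{lemma:Wasserstein:convolution} directly, and that you state the convolution property in the contraction direction $W_p(\mu\star\eta,\nu\star\eta)\le W_p(\mu,\nu)$ --- which is precisely the direction the paper's displayed chain for case (ii) actually applies, even though the first claim of Lemma~\ref{lemma:Wasserstein:convolution} is written with the inequality reversed.
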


%
%

In our technical approach, we use the next result, whose proof is
given in the Appendix. The result examines what happens to the
Wasserstein distance between the distributions of two random variables
when other random variables are added.

\begin{lemma}\longthmtitle{Wasserstein
		distance under convolution}\label{lemma:Wasserstein:convolution}
	Given $p\ge 1$ and distributions $P_1,P_2,Q\in\P_p(\Rat{d})$, it
	holds that
	$
	W_p(P_1,P_2)\le W_p(P_1\star Q,P_2\star Q).
	$ 
  Also, if it holds that $\big(\int_{\Rat{d}}\|x\|^pQ(dx)\big)^{\frac{1}{p}}\le q$, 
then $W_p(P_1,P_2\star Q)\le W_p(P_1,P_2)+q$.      
\end{lemma}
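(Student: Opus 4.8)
The plan is to prove both inequalities by exhibiting explicit couplings, so that each reduces to a bound on a transport cost; no concentration machinery is needed here.

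For the first inequality the guiding observation is that convolving \emph{both} arguments by the same $Q$ amounts to adding one shared, independent $Q$-distributed displacement to both marginals, and a common displacement leaves the pairwise distance unchanged. Concretely, I would take an arbitrary coupling $\pi\in\H(P_1,P_2)$, form the product of its domain with an independent copy of $Q$ so as to realize $(X,Y)\sim\pi$ and $Z\sim Q$ with $Z$ independent of $(X,Y)$, and let $\tilde\pi$ be the law of $(X+Z,Y+Z)$. By the definition of convolution as the pushforward of $\mu\otimes Q$ under $(x,z)\mapsto x+z$ recalled in the preliminaries, the marginals of $\tilde\pi$ are $P_1\star Q$ and $P_2\star Q$, so $\tilde\pi\in\H(P_1\star Q,P_2\star Q)$. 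Its cost is $\bE\|(X+Z)-(Y+Z)\|^p=\bE\|X-Y\|^p=\int\|x-y\|^p\,\pi(dx,dy)$, since the shared $Z$ cancels. Taking the infimum over all $\pi$ then yields $W_p(P_1\star Q,P_2\star Q)\le W_p(P_1,P_2)$, i.e.\ convolution by a common factor is non-expansive on $(\P_p(\Rat d),W_p)$. This is precisely the orientation needed later, where the ball center and the true law are both propagated through the same $\star\,P_{w_{\ell-1}}$ in Theorem~\ref{thm:pointwise:ambiguity:set}(2), so that the $\|A_{\ell-1}\|$-step there does not accumulate an additive noise term.

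For the second inequality I would combine the triangle inequality with the elementary fact that convolving a \emph{single} argument by $Q$ displaces it by at most the $p$th moment of $Q$. The plan is to write $W_p(P_1,P_2\star Q)\le W_p(P_1,P_2)+W_p(P_2,P_2\star Q)$ and then bound the last term via the synchronous coupling $(X,X+Z)$ with $X\sim P_2$ and $Z\sim Q$ independent: its cost is $\bE\|Z\|^p=\int_{\Rat d}\|z\|^pQ(dz)\le q^p$, whence $W_p(P_2,P_2\star Q)\le\big(\int_{\Rat d}\|x\|^pQ(dx)\big)^{1/p}\le q$ and the claim follows.

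The routine verifications are that the pushforward identity genuinely produces the asserted marginals and that the passage to the infimum over $\pi$ is legitimate (both immediate), so the one point requiring real care is the \emph{orientation} of the first estimate. The synchronous coupling can only certify $W_p(P_1\star Q,P_2\star Q)\le W_p(P_1,P_2)$, that is, non-expansiveness; the opposite bound is not available for arbitrary $Q$, since it would require recovering $P_1,P_2$ from their convolutions, and a sufficiently spread-out $Q$ strictly contracts $W_p$. I would therefore make sure to invoke the inequality in exactly this non-expansive orientation, which is the form consumed by the dynamic-propagation argument. No compactness or support hypotheses enter, so the conclusion holds for all $P_1,P_2,Q\in\P_p(\Rat d)$.
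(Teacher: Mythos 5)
Your proof of the second inequality is correct and is in substance the same as the paper's: the paper takes an optimal coupling $\pi$ of $(P_1,P_2)$, forms the coupling $\widetilde\pi$ given by the law of $(X,Y+Z)$ with $Z\sim Q$ independent of $(X,Y)\sim\pi$, and applies the triangle inequality in $L_p(\Rat{d}\times\Rat{2d};Q\otimes\pi)$; that $\widetilde\pi$ is exactly the gluing of your two couplings, so your route through the metric triangle inequality $W_p(P_1,P_2\star Q)\le W_p(P_1,P_2)+W_p(P_2,P_2\star Q)$ followed by the synchronous coupling $(X,X+Z)$ is an equivalent repackaging of the same estimate.

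The substantive point is the first inequality, and your caution about orientation is exactly right --- indeed, it exposes an error in the statement as printed. What you prove is non-expansiveness, $W_p(P_1\star Q,P_2\star Q)\le W_p(P_1,P_2)$, via the shared-displacement coupling $(X+Z,Y+Z)$; the lemma as typeset asserts the reverse, $W_p(P_1,P_2)\le W_p(P_1\star Q,P_2\star Q)$, which is false in general. For a concrete counterexample on $\real$ with $p=1$, take $P_1=\delta_0$, $P_2=\tfrac12(\delta_{-2}+\delta_2)$, and $Q=\tfrac12(\delta_{-1}+\delta_1)$: then $W_1(P_1,P_2)=2$, while $P_1\star Q=\tfrac12(\delta_{-1}+\delta_1)$ and $P_2\star Q=\tfrac14(\delta_{-3}+\delta_{-1}+\delta_1+\delta_3)$ give $W_1(P_1\star Q,P_2\star Q)=1$, so convolution can strictly contract, as you anticipated. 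Your non-expansive direction is precisely the one the paper consumes in the proof of Theorem~\ref{thm:pointwise:ambiguity:set}, case (ii), where $W_p$ of the two measures convolved with $P_{w_\ell}$ is bounded by $W_p$ of the unconvolved ones; so the defect lies in the typeset inequality, not in your argument. Note finally that the paper does not actually prove the first part --- it defers to \cite[Lemma~1]{GCP-AP:16} --- so your explicit coupling argument is self-contained where the paper's proof is by citation; translation examples (where $P_2$ is a shift of $P_1$) show the two orientations can coincide with equality, but only the non-expansive one holds for arbitrary $Q\in\P_p(\Rat{d})$.
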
 

\begin{proof}[Proof of Theorem~\ref{thm:pointwise:ambiguity:set}]
	The proof is carried out by induction on
	$\ell\in[\ell_1:\ell_2]$. In particular, it suffices to establish
	that
	\begin{align} \label{pointwise:inclusion:induction}
		W_p\big(P_{\xi_{\ell_1}},\widetilde 
		P_{\xi_{\ell_1}}^N\big)\le\psi_{N,\ell_1}\Longrightarrow 
		W_p\big(P_{\xi_{\ell'}},\widetilde 
		P_{\xi_{\ell'}}^N\big)\le\psi_{N,\ell'} 
		\;\forall\ell'\in[\ell_1:\ell].
	\end{align}
	Note that from Theorem~\ref{thm:ambiguity:set:noise},
	$\bP\big(P_{\xi_{\ell_1}}\in \B_{\psi_{N,\ell_1}}(\widetilde
	P_{\xi_{\ell_1}}^N)\big)\ge 1-\beta$. From
	\eqref{pointwise:inclusion:induction}, this also implies that $
	\bP\big(P_{\xi_{\ell'}}\in \B_{\psi_{N,\ell'}}(\widetilde
	P_{\xi_{\ell'}}^N)\;\forall\ell'\in[\ell_1:\ell]\big)\ge 1-\beta$,
	establishing validity of the result for $\ell\equiv\ell_2$.
	
	For $\ell\equiv\ell_1$, the induction
	hypothesis~\eqref{pointwise:inclusion:induction} is a
	tautology. Next, assuming that it is true for certain
	$\ell\in[\ell_1:\ell_2-1]$, we show that it also holds for
	$\ell+1$. Hence it suffices to show that if
	$W_p\big(P_{\xi_\ell},\widetilde P_{\xi_\ell}^N\big)\le\psi_{N,\ell}
	$ then also $W_p\big(P_{\xi_{\ell+1}},\widetilde
	P_{\xi_{\ell+1}}^N\big)\le\psi_{N,\ell+1}$ for both cases (i)
	and~(ii). Consider first (i) and note that then the ambiguity set
	center at $\ell+1$ satisfies
	\begin{align*}
		\widetilde P_{\xi_{\ell+1}}^N
		=\frac{1}{N}\sum_{i=1}^N\delta_{\widetilde\xi_{\ell+1}^i}
		=\frac{1}{N}\sum_{i=1}^N\delta_{A_\ell\widetilde\xi_\ell^i}=
		(A_\ell)_\#\widetilde P_{\xi_\ell}^N,
	\end{align*}
	where we have exploited that
	$\widetilde\xi_k^i\equiv\Phi_{k,\ell_1}\widehat\xi_{\ell_1}^i$ and
	the definition of $\Phi_{k,\ell_1}$ (for $k=\ell-1,\ell$) to derive
	the second equality. Using also the fact that
	$P_{\xi_{\ell+1}}=\big((A_\ell)_\#P_{\xi_\ell}\big) \star
	P_{w_\ell}$, we get from the second result of
	Lemma~\ref{lemma:Wasserstein:convolution} that
	\begin{align*}
		W_p(P_{\xi_{\ell+1}},\widetilde P_{\xi_{\ell+1}}^N) &
		=W_p\big(\big((A_\ell)_\#P_{\xi_\ell}\big)\star
		P_{w_\ell},(A_\ell)_\#\widetilde P_{\xi_\ell}^N\big) \\
		& \le
		W_p\big((A_\ell)_\#P_{\xi_\ell},(A_\ell)_\#\widetilde
		P_{\xi_\ell}^N\big)+q_w \\
		& \le \|A_\ell\|W_p\big(P_{\xi_\ell},\widetilde
		P_{\xi_\ell}^N\big)+q_w\le\|A_\ell\|\psi_{N,\ell}+q_w
		=\psi_{N,\ell+1}.
	\end{align*}
	Here we also used the fact that $W_p(f_\#P,f_\#Q)\le L W_p(P,Q)$ for
	any globally Lipschitz function $f:\Rat{d}\to\Rat{r}$ with Lipschitz
	constant $L$ in the second to last inequality (see e.g.,
	\cite[Proposition 7.16]{CV:03}).
	
	Next, we prove the induction hypothesis for (ii). Using
	Lemma~\ref{lemma:Wasserstein:convolution} and the definition of the
	ambiguity set center and radius,
	\begin{align*}
		W_p(P_{\xi_{\ell+1}},\widetilde P_{\xi_{\ell+1}}^N) &
		=W_p\big(\big((A_\ell)_\#P_{\xi_\ell}\big)\star P_{w_\ell},
		\big((A_\ell)_\#\widetilde P_{\xi_\ell}^N\big)
		\star P_{w_\ell}\big) \\ 
		& \le W_p\big((A_\ell)_\#P_{\xi_\ell},(A_\ell)_\#\widetilde
		P_{\xi_\ell}^N\big) \le \|A_\ell\|W_p\big(P_{\xi_\ell},\widetilde
		P_{\xi_\ell}^N\big) \\
		& \le\|A_\ell\|\psi_{N,\ell}
		=\|A_\ell\|\|A_{\ell-1}\|\cdots\|A_{\ell_1}\|\psi_{N,\ell_1}
		=\psi_{N,\ell+1},
	\end{align*}
	completing the proof.
\end{proof}

\section{Sufficient conditions for uniformly bounded noise ambiguity
	radii} \label{sec:uniform:noise:radius:bnd}

In this section we leverage Assumption~\ref{detectability:assumption} to 
establish that the noise ambiguity radius remains uniformly bounded as the 
sampling horizon increases. We first provide uniform bounds for the matrices 
involved in the system and observer error dynamics.

\begin{proposition}\longthmtitle{Bounds on system/observer
		matrices}\label{prop:fundamental:matrix:contraction}
	Under Assumption~\ref{detectability:assumption}, the gain matrices
	$K_k$ can be selected so that the following properties hold:
	\begin{enumerate}
		\item There exist $K_\star,K^\star,G^\star>0$ and
		$\Psi_s^\star>0$, $s\in\mathbb N_0$, so that $\|G_k\|\le G^\star$,
		$K_\star\le\|K_k\|\le K^\star$, and $\|\Psi_{k+s,k}\|\le
		\Psi_s^\star$ for all and $k\in\mathbb N_0$.
		\item There exists $s_0\in\mathbb N$ so that
		$\|\Psi_{k+s,k}\|\le\frac{1}{2}$ for all $k\in\mathbb N_0$ and
		$s\ge s_0$.
	\end{enumerate}
\end{proposition}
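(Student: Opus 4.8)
The plan is to reduce the whole statement to a single uniform exponential estimate on the error transition matrices and then verify it under each alternative in Assumption~\ref{detectability:assumption}. Concretely, I would first select the gains $K_k$ so that, for constants $M\ge 1$ and $\lambda\in(0,1)$ that do not depend on $k$,
\begin{align*}
	\|\Psi_{k+s,k}\|\le M\lambda^s\qquad\text{for all }k,s\in\mathbb N_0.
\end{align*}
Granting this, both claims are immediate. Property~(2) follows by taking any $s_0\in\mathbb N$ with $M\lambda^{s_0}\le\tfrac12$. For property~(1), the bound $\|G_k\|\le G^\star$ is the uniform boundedness of the system matrices assumed in Assumption~\ref{detectability:assumption}; the upper bound $\|K_k\|\le K^\star$ is part of the construction; the factor estimate $\|\Psi_{k+s,k}\|\le\Psi_s^\star:=(\sup_k\|F_k\|)^s$ holds because $F_k=A_k+K_kH_k$ is uniformly bounded, so each $\Psi_s^\star$ is finite; and the lower bound $K_\star>0$ is obtained by keeping the gains nonzero and bounded away from zero, as discussed at the end.

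For the time-invariant case~(i), detectability of $(A,H)$ yields a constant gain $K$ making $F:=A+KH$ Schur (all eigenvalues in the open unit disk). I would set $K_k\equiv K$, so that $\Psi_{k+s,k}=F^s$ for every $k$; since the spectral radius governs the asymptotic growth of matrix powers (Gelfand's formula), $\|F^s\|\le M\lambda^s$ for suitable $M\ge1$ and $\lambda\in(0,1)$, which is the desired bound (here even independent of $k$). Should detectability be witnessed by $K=0$, I would perturb it to a small nonzero gain; the Schur property, being an open condition, is preserved, and this secures $K_\star>0$.

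For the uniformly observable case~(ii), the substance is to build uniformly bounded \emph{time-varying} gains that render the error dynamics uniformly exponentially stable. I would argue by duality: the uniform observability encoded in $\Obs_{k+t,k}\succeq bI$ is the dual of uniform controllability of the transposed pair $(A_k^\top,H_k^\top)$, while the uniform bounds on the system matrices---together with the lower bounds on the singular values of $A_k$ (uniform invertibility) and on $\|H_k\|$---keep the attendant Gramians and their inverses uniformly bounded. Classical observer/stabilization theory then supplies a uniformly bounded feedback $K_k^\top$ for which $A_k^\top+H_k^\top K_k^\top$ is uniformly exponentially stable; transposing and using $\|\Psi_{k+s,k}\|=\|\Psi_{k+s,k}^\top\|$ transfers the uniform exponential estimate to $F_k=A_k+K_kH_k$.

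The hard part will be case~(ii): obtaining \emph{uniform} exponential stability---rather than mere asymptotic stability at each frozen $k$---with gains that remain bounded \emph{both above and below}. This is exactly where the uniform boundedness hypotheses and the lower bounds on the singular values of $A_k$ and on $\|H_k\|$ are indispensable, since they prevent the constructed Gramians, and hence the gains, from degenerating as $k$ varies. A secondary point, shared by both cases, is enforcing the strictly positive lower bound $K_\star$ while keeping each $K_k$ nonzero; I would handle this by a uniformly small perturbation of the stabilizing gains, which cannot destroy uniform exponential stability and therefore leaves all the above estimates intact.
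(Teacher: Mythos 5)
Your proposal is essentially correct and shares the paper's overall skeleton (reduce everything to a uniform geometric decay $\|\Psi_{k+s,k}\|\le M\lambda^s$, from which both parts follow, and treat the time-invariant case via a Schur $F=A+KH$), but in the uniformly observable case it takes a genuinely different route. The paper does not argue by duality: it writes down the explicit gain $K_k=-A_k\Phi_{k,k-t-1}\Obs_{k,k-t-1}^{-1}\Phi_{k,k-t-1}^\top H_k^\top$ from the classical observer literature, gets $K^\star$ and $K_\star$ \emph{directly} from this formula (the lower bound coming from the uniform lower bounds on the singular values of $A_k$, on $\|H_k\|$, and on $\Obs^{-1}$), and then invokes a time-varying quadratic Lyapunov function $V(k,\widetilde e)=\widetilde e^\top Q_k\widetilde e$ with $a_1 I\preceq Q_k\preceq a_2 I$ and $m$-step decrease, from which an explicit induction yields $\|\Psi_{k+\nu m,k}\|\le(1-a_3)^{\nu/2}(a_2/a_1)^{1/2}$. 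What the paper's route buys is precisely the two points you yourself flag as delicate: (a) the strictly positive lower bound $K_\star$ comes for free from the closed-form gain, whereas your perturbation argument needs more care than stated --- to guarantee $\|K_k\|\ge K_\star$ uniformly (not merely $K_k\neq 0$) you cannot just add a fixed small perturbation, since $\|K_k+\delta E\|$ can still vanish when $\|K_k\|\approx\delta$, so the perturbation must be chosen $k$-by-$k$ and you must then verify that uniform exponential stability survives a uniformly small but $k$-dependent gain perturbation; and (b) your duality step is stated imprecisely --- uniform observability of $(A_k,H_k)$ is dual to uniform controllability of the \emph{time-reversed} adjoint system, not of the forward-time pair $(A_k^\top,H_k^\top)$, and while the classical results you would invoke do handle this, the statement as written glosses over the reversal. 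Neither issue is fatal, but both are places where your argument defers to unstated classical machinery that the paper instead makes explicit.
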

\begin{proof}
	Note that we only need to verify part \emph{(i)} for the
	time-varying case.  Since all $G_k$ are uniformly bounded, we
	directly obtain the bound $G^\star$. Let
	\begin{align*}
		K_k:=-A_k \Phi_{k,k-t-1}{\Obs_{k,k-t-1}^{-1}}\Phi_{k,k-t-1}^\top 
		H_k^\top,
	\end{align*}
	(for $k>t+1$) as selected in \cite[Page 574]{JBM-BDOA:80} (but with
	a minus sign at the front to get the plus sign in $F_k=A_k+K_kH_k$)
	and with the observability Gramian $\Obs_{k,k-t-1}$ as defined in
	Assumption~\ref{detectability:assumption}(ii). Then, the upper bound
	$K^\star$ follows from the fact that the system matrices are
	uniformly bounded combined with the uniform observability property
	of Assumption~\ref{detectability:assumption}, which implies that all
	$\Obs_{k,k-t-1}^{-1}$ are also uniformly bounded. On the other hand,
	the lower bound $K_\star$ follows from the assumption that the
	system matrices are uniformly bounded, which imposes a uniform lower
	bound on the smallest singular value of $\Obs_{k,k-t-1}^{-1}$, the
	uniform lower bound on the smallest singular value of $A_k$, hence,
	also on that of $\Phi_{k,k-t-1}$ and $\Phi_{k,k-t-1}^\top$, and the
	uniform lower bound on $\|H_k\|$ (all found in
	Assumption~\ref{detectability:assumption}).  Finally, the bounds
	$\Psi_s^\star$ follow from the uniform bounds for all $A_k$ and
	$H_k$ and the derived bound $K^\star$ for all~$K_k$.
	
	To show part \emph{(ii)}, assume first that
	Assumption~\ref{detectability:assumption}(i) holds, i.e., the system
	is time invariant and $(A,H)$ is detectable. Then, we can choose a
	nonzero gain matrix $K$ so that $F=A+KH$ is
	convergent~(cf. \cite[Theorem 31]{EDS:98}), namely
	$\lim_{s\to\infty}\|F^s\|=0$. Consequently, there is $s_0\in\bN$
	with $\|F^s\|\le\frac{1}{2}$ for all $s\ge s_0$ and the result
	follows by taking into account that $\Psi_{k+s,k}=F^s$. In case
	Assumption~\ref{detectability:assumption}(ii) holds, let
	\begin{align}\label{undisturbed:error:eqn}
		\widetilde e_{k+1}=F_k\widetilde e_k
	\end{align}  
	be the recursive noise-free version of the error
	equation~\eqref{error:formula}. Then, from \cite[Page
	577]{JBM-BDOA:80}, 
	there exists a quadratic time-varying Lyapunov function
	$V(k,\widetilde e):=\widetilde e^\top Q_k\widetilde e$ with each
	$Q_k$ being positive definite, $a_1,a_2>0$, $a_3\in(0,1)$, and
	$m\in\mathbb N$ so that
	\begin{subequations}
		\begin{align}
			a_1\le\lambda_{\min}(Q_k)\le\lambda_{\max}(Q_k) & \le a_2 
			\label{eigenvalue:sandwitch:bnds}\\
			V(k+m,\widetilde e_{k+m})-V(k,\widetilde e_k) &
			\le-a_3V(k,\widetilde e_k)
			\label{Lyapunov:decrease} 
		\end{align}  
	\end{subequations}
	for any $k$ and any solution of \eqref{undisturbed:error:eqn} with
	state 
	$\widetilde e_k$ at time $k$. Thus, we have that $\Psi_{k+m,m}^\top
	Q_{k+m}\Psi_{k+m,m}\preceq (1-a_3)Q_k $, and hence, by induction, 
	we get that $\Psi_{k+\nu m,m}^\top$ $\times Q_{k+\nu m}\Psi_{k+\nu m,m}\preceq
	(1-a_3)^\nu Q_k$, since
	\begin{align*}
		& \Psi_{k+(\nu+1)m,m}^\top Q_{k+(\nu+1)m}\Psi_{k+(\nu+1)m,m}
		 \\
		 &\qquad
		\! = \! \Psi_{k+m,k}^\top\Psi_{k+(\nu+1) m,k+m}^\top
		Q_{k+(\nu+1)m}\Psi_{k+(\nu+1)m,k+m}\Psi_{k+m,k}
		\\
		& \qquad\preceq (1-a_3)^\nu \Psi_{k+m,k}^\top Q_{k+m}\Psi_{k+m,k}
		\preceq (1-a_3)^{(\nu+1)}Q_k.
	\end{align*} 
	Next, pick $\widetilde e$ with $\|\widetilde e\|=1$ and
	$\|\Psi_{k+\nu m,m}\widetilde e\|=\|\Psi_{k+\nu m,m}\|$. Taking into 
	account that $\widetilde e^\top\Psi_{k+\nu m,k}^\top Q_{k+\nu 
		m}\Psi_{k+\nu 
		m,m}\widetilde e \le (1-a_3)^\nu\widetilde e^\top Q_k\widetilde e$, we 
	get $\lambda_{\min}(Q_{k+\nu m})\|\Psi_{k+\nu m,k}\widetilde e\|^2 \le
	(1-a_3)^\nu\lambda_{\max}(Q_k)$. Using~\eqref{eigenvalue:sandwitch:bnds},
	\begin{align}
		\|\Psi_{k+\nu m,k}\| & \le
		(1-a_3)^{\frac{\nu}{2}}\Big(\frac{a_2}{a_1}\Big)^{\frac{1}{2}}.
		\label{Psi:bound}
	\end{align}
	Now, select $\nu$ so that $
	(1-a_3)^{\frac{{\nu'}}{2}}({a_2}/{a_1})^{\frac{1}{2}} \le
	{1}/{(2\max_{s\in[1:m]}\Psi_s^\star)}$ for all $\nu'\ge\nu$. Let 
	$s_0:=\nu m$ and pick $s\ge s_0$. Then, $s=s_0'+m'$ for some 
	$s_0'=\nu'm$, $\nu'\ge\nu$, and $m'\in[0:m-1]$ and we get from 
	\eqref{Psi:bound}, part~(i), and the selection of $\nu$ that
	\begin{align*}
		\|\Psi_{k+sm,k}\| 
		& \!=\! \|\Psi_{k+{s_0'}+m',k+{s_0'}}\Psi_{k+{s_0'},k}\| 
		 \\
		 & 
		\!    \le \! \|\Psi_{k+{s_0'}+m',k+{s_0'}}\|\|\Psi_{k+{\nu'}m,k}\| 
		\! \le \! \Psi_{m'}^\star\frac{1}{2\max_{s\in[1:m]}\Psi_s^\star}
		\! \le \!
		\frac{1}{2},
	\end{align*}
	which establishes the result. 	  
\end{proof}

Based on this result and Assumption~\ref{detectability:assumption}
about the system's detectability/uniform observability properties, we
proceed to provide a uniform bound on the size of the noise radius for
arbitrarily long evolution horizons.

\begin{proposition}\longthmtitle{Uniform bounds for noise
		ambiguity radius}\label{prop:noise:radius:bounds}
	Consider data collected from $N$ realizations of
	system~\eqref{eq:data}, a confidence $1-\beta$ as
	in~\eqref{confidence:decomposition}, and let
	Assumptions~\ref{samplig:assumption},~\ref{assumption:distributiuon:class},
	and~\ref{detectability:assumption} hold. Then, there exist
	observer gain matrices $K_k$ so that the noise ambiguity radius
	$\widehat\eps_N$ in \eqref{hat:epsN} is uniformly bounded with
	respect to the sampling horizon size. In particular, there exists
	$\ell_0\in\bN$ so that, for each $\ell\ge\ell_0$,
	$\fM_w\equiv\fM_w(\ell)$, $\fM_v\equiv\fM_v(\ell)$, and
	$\fR\equiv\fR(\ell)$ given by \eqref{frakMw}, \eqref{frakMv}, and
	\eqref{constant:frakR}, are uniformly upper bounded~as
	\begin{align*}
		\fM_w & \le
		\frac{1}{2}\sqrt{d}\rho_{\xi_0}+{3}\sqrt{q}\sum_{j=0}^{\ell_0-1}\Psi_j^\star
		G^\star\rho_w,
		\\
		\fM_v & \le {3}M_vr\sum_{j=0}^{\ell_0-1}\Psi_j^\star K^\star, \qquad
		\fR \le
		{3}\frac{C_v}{m_v}r^{\frac{p-1}{p}}\frac{\sum_{j=0}^{\ell_0-1}\Psi_j^\star
			K^\star}{K_\star}.
	\end{align*}
\end{proposition}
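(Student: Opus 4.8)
The plan is to first fix the observer gains $K_k$ exactly as prescribed in Proposition~\ref{prop:fundamental:matrix:contraction}, which supplies two ingredients: the uniform bounds $G^\star,K^\star,K_\star,\Psi_s^\star$ of part~(i), and the eventual contraction $\|\Psi_{k+s,k}\|\le\tfrac12$ for all $k$ and all $s\ge s_0$ of part~(ii). I then reduce the whole statement to a single horizon-independent estimate on sums of error-propagator norms. Indeed, after the reindexing $j=k-1$ (so that $\Psi_{\ell,\ell-j}$ propagates the error dynamics over exactly $j$ steps), the quantities $\fM_w$, $\fM_v$, and $\fC_v$ in \eqref{frakMw}, \eqref{frakMv}, \eqref{frakCv} are all controlled, up to the constants $G^\star,K^\star,C_v$, by $\sum_{k=1}^{\ell}\|\Psi_{\ell,\ell-k+1}\|=\sum_{j=0}^{\ell-1}\|\Psi_{\ell,\ell-j}\|$. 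I set $\ell_0:=s_0$.

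The heart of the argument is to turn the \emph{eventual} contraction of part~(ii) into genuine geometric decay of $\|\Psi_{\ell,\ell-j}\|$ in $j$, uniformly in $\ell$. For fixed $j$ I write $j=ms_0+r$ with $m=\lfloor j/s_0\rfloor$, $r\in[0:s_0-1]$, and peel off blocks from the most recent time,
\begin{align*}
  \Psi_{\ell,\ell-j}=\Psi_{\ell,\ell-s_0}\,\Psi_{\ell-s_0,\ell-2s_0}\cdots
  \Psi_{\ell-(m-1)s_0,\ell-ms_0}\,\Psi_{\ell-ms_0,\ell-j}.
\end{align*}
Each of the $m$ length-$s_0$ blocks has norm at most $\tfrac12$ by part~(ii), and the remainder block has norm at most $\Psi_r^\star$ by part~(i), so $\|\Psi_{\ell,\ell-j}\|\le 2^{-\lfloor j/s_0\rfloor}\Psi_{j\bmod s_0}^\star$. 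Summing the resulting geometric series yields
\begin{align*}
  \sum_{j=0}^{\ell-1}\|\Psi_{\ell,\ell-j}\|
  \le\Big(\sum_{m=0}^{\infty}2^{-m}\Big)\Big(\sum_{r=0}^{s_0-1}\Psi_r^\star\Big)
  \le 3\sum_{j=0}^{\ell_0-1}\Psi_j^\star ,
\end{align*}
a bound independent of $\ell$ (the convergent geometric factor produces the constant, with ample margin for the head/remainder terms).

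It then remains to assemble the three bounds. For $\fM_w$ the leading term obeys $\sqrt d\,\|\Psi_\ell\|\rho_{\xi_0}\le\tfrac12\sqrt d\,\rho_{\xi_0}$, since $\Psi_\ell=\Psi_{\ell,0}$ propagates $\ell\ge\ell_0=s_0$ steps and is thus contractive, while the summation term is handled by $\|G_{\ell-k}\|\le G^\star$ and the displayed estimate; the bound for $\fM_v$ is identical with $\|K_{\ell-k}\|\le K^\star$. For $\fR=\fC_v/\fm_v+1/\ln 2$ from \eqref{constant:frakR}, I bound $\fC_v$ by the same sum estimate and, crucially, lower-bound the denominator $\fm_v$ in \eqref{frakmv} by retaining only its $k=1$ term, where $\Psi_{\ell,\ell}=I$ and $\|K_{\ell-1}\|\ge K_\star$, giving $\fm_v\ge m_v r^{1/p}K_\star>0$ uniformly in $\ell$; this produces $\fC_v/\fm_v\le \text{const}\cdot\frac{C_v}{m_v}r^{(p-1)/p}\frac{K^\star\sum_{j=0}^{\ell_0-1}\Psi_j^\star}{K_\star}$, into which the additive $1/\ln2$ is absorbed. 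Substituting the three uniform bounds into~\eqref{hat:epsN} shows $\widehat\eps_N$ is uniformly bounded in the horizon.

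I expect the geometric-decay estimate to be the main obstacle: extracting decay that is uniform in both $\ell$ and $j$ from a contraction that only holds for block lengths $\ge s_0$ forces the block-and-remainder decomposition above and the careful summation of the geometric series. A secondary but essential subtlety is the uniform positive lower bound on $\fm_v$; since $\fm_v$ appears in the denominator of $\fR$, the estimate would be vacuous without isolating the identity ($k=1$) term, and this is precisely where the lower gain bound $K_\star$ from part~(i) is indispensable.
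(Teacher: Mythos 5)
Your proposal is correct and follows essentially the same route as the paper: fix the gains via Proposition~\ref{prop:fundamental:matrix:contraction}, take $\ell_0=s_0$, convert the eventual contraction into geometric decay of the error propagators by a block-product decomposition, sum the geometric series to bound $\fM_w$, $\fM_v$, $\fC_v$ uniformly in $\ell$, and lower-bound $\fm_v$ by its $k=1$ term using $K_\star$. Your per-term estimate $\|\Psi_{\ell,\ell-j}\|\le 2^{-\lfloor j/s_0\rfloor}\Psi_{j\bmod s_0}^\star$ is a mild streamlining of the paper's block-wise splitting of the sum (and even yields the slightly sharper factor $2$ in place of $3$), but the underlying argument is the same.
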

\begin{proof}
	Consider gain matrices $K_k$ and the time $s_0$ as given in
	Proposition~\ref{prop:fundamental:matrix:contraction}, and let
	$\ell_0:=s_0$. Then, for any $\ell\ge\ell_0$, $\ell=n\ell_0+r'$
	with $0\le r'<\ell_0$ and we have
	\begin{align*}
		\sum_{k=1}^\ell & \|\Psi_{\ell,\ell-k+1}G_{\ell-k}\|\le
		\sum_{k=1}^{\ell}\|\Psi_{\ell,\ell-k+1}\|G^\star \\
		& =\bigg(\sum_{k=1}^{r'}\|\Psi_{\ell,\ell-k+1}\|
		+\sum_{k=r'+1}^{\ell}\|\Psi_{\ell,\ell-k+1}\|\bigg)G^\star\\
		&
		\le\bigg(\sum_{s=0}^{r'-1}\Psi_s^\star
		+\sum_{k=r'+1}^{n\ell_0+r'}\|\Psi_{n\ell_0+r',n\ell_0+r'-k+1}\|\bigg)G^\star
		\quad(k\mapsto(\nu-1)\ell_0+j+r')\\
		& 
		=\bigg(\sum_{s=0}^{r'-1}\Psi_s^\star
		+\sum_{\nu=1}^n\sum_{j=1}^{\ell_0}\|
		\Psi_{n\ell_0+r',(n-\nu)\ell_0+r'+\ell_0-j+1}\|\bigg)G^\star
		\quad(\ell_0+1-j\mapsto j) \\
		&
		\le\bigg(\sum_{s=0}^{r'-1}\Psi_s^\star
		+\sum_{\nu=1}^n\sum_{j=1}^{\ell_0}
		\|\Psi_{n\ell_0+r',(n-\nu+1)\ell_0+r'}\|
		\|\Psi_{(n-\nu)\ell_0+r'+\ell_0,(n-\nu)\ell_0+r'+j}\|\bigg)G^\star\\
		& =\bigg(\sum_{s=0}^{r'-1}\Psi_s^\star
		+\sum_{\nu=1}^n\|\Psi_{n\ell_0+r',(n-\nu+1)\ell_0+r'}\|
		\sum_{j=1}^{\ell_0}
		\|\Psi_{(n-\nu)\ell_0+r'+\ell_0,(n-\nu)\ell_0+r'+j}\|\bigg)G^\star\\
		&\le\bigg(\sum_{s=0}^{r'-1}\Psi_s^\star
		+\sum_{\nu=1}^n\bigg(\prod_{\kappa=1}^{\nu-1}
		\|\Psi_{(n+1-\kappa)\ell_0+r',(n-\kappa)\ell_0+r'}\|\bigg)
		\sum_{j=1}^{\ell_0}\Psi_{\ell_0-j}^\star\bigg)G^\star\\
		&\le\bigg(\sum_{s=0}^{\ell_0-1}\Psi_s^\star
		+\sum_{\nu=1}^n\Big(\frac{1}{2}\Big)^{\nu-1}
		\sum_{j=0}^{\ell_0-1}\Psi_j^\star\bigg)G^\star\le 3
		\sum_{j=0}^{\ell_0-1}\Psi_j^\star G^\star,
	\end{align*}
	where we have used $\sum_{\kappa=0}^{-1}\equiv
	\sum_{\kappa=1}^0\equiv 0$ and $\prod_{\kappa=1}^0\equiv 1$. From
	this and the fact that from
	Proposition~\ref{prop:fundamental:matrix:contraction},
	$\|\Psi_{\ell}\|\le\frac{1}{2}$ for all $\ell\ge\ell_0$, we get the
	upper bound for~$\fM_w$. The one for $\fM_v$ is obtained
	analogously. Finally, for~$\fR$, we obtain the same type of upper
	bound for $\fC_v$ as for $\fM_w$, and exploit
	Proposition~\ref{prop:fundamental:matrix:contraction}(i) to get the
	lower bound $\fm_v=m_vr^{\frac{1}{p}}
	\big(\sum_{k=1}^\ell\|\Psi_{\ell,\ell-k+1}K_{\ell-k}\|^p\big)^{\frac{1}{p}}
	\ge m_vr^{\frac{1}{p}} \|\Psi_{\ell,\ell}K_{\ell-1}\|\ge
	m_vr^{\frac{1}{p}} K_\star$, which is independent of~$\ell$.
\end{proof}

\begin{remark}\longthmtitle{Noise ambiguity radius for time-invariant
		systems} {\rm For time-invariant systems, it is possible to
		improve the bounds of Proposition~\ref{prop:noise:radius:bounds}
		for $\fM_w$, $\fM_v$, and $\fR$ by exploiting the fact that the
		system and observer gain matrices are constant.  The precise
		bounds in this case (see also~\cite[Proposition
		5.5]{DB-JC-SM:20-acc}) are
		\begin{align*}
			\fM_w & \le\frac{1}{2}\sqrt{d}\rho_{\xi_0}
			+2\sqrt{q}\sum_{k=0}^{\ell_0-1}\|\Psi_kG\|\rho_w,
			\\
			\fM_v & \le 2M_vr\sum_{k=0}^{\ell_0-1}\|\Psi_kK\|, \quad \fR
			\le
			2\frac{C_v}{m_v}r^{\frac{p-1}{p}}\frac{\sum_{k=0}^{\ell_0-1}\|\Psi_kK\|}
			{\big(\sum_{k=0}^{\ell_0-1}\|\Psi_kK\|^p\big)^{\frac{1}{p}}},
		\end{align*}
		with $\ell_0$ as in the time-invariant case of
		Proposition~\ref{prop:noise:radius:bounds}, and where $G$ and $K$
		denote the constant values of the internal noise and observer gain
		matrices, resp.  The superiority of these bounds can be checked
		using the matrix bounds in
		Proposition~\ref{prop:fundamental:matrix:contraction}(i) and their
		derivation is based on a simplified version of the arguments
		employed for the proof of
		Proposition~\ref{prop:noise:radius:bounds}.} \oprocend
\end{remark}

\section{Application to economic dispatch with distributed energy resources}

In this section, we take advantage of the ambiguity sets constructed
with noisy partial measurements, cf.
Theorem~\ref{thm:ambiguity:set:noise}, to hedge against the
uncertainty in an optimal economic dispatch problem.  This is a
problem where uncertainty is naturally involved due to (dynamic)
energy resources, which the scheduler has no direct access to control
or measure, like storage or renewable energy elements. The financial
implications of the associated decisions are of utmost importance for
the electricity market and justify the use of a reliable decision
framework that accounts for the variability of the uncertain factors.

\subsection{Network model and optimization objective}

Consider a network with distributed energy
resources~\cite{AC-JC:18-tac} comprising of $n_1$ generator units and
$n_2$ storage (battery) units.
The network needs to operate as close as possible to a prescribed
power demand $D$ at the end of the time horizon $[0:\ell]$,
corresponding to a uniform discretization of step-size $\delta t$ of
the continuous-time domain.  To this end, each generator and storage
unit supplies the network with positive power $P^j$ and $S^\iota$,
respectively, at time $\ell$. We assume we can control the power of
the generators, which additionally needs to be within the upper and
lower thresholds $P_{\min}^j$ and $P_{\max}^j$, respectively.  Each
battery is modeled as an uncertain dynamic element with an unknown
initial state distribution and we can decide whether it is connected
($\eta^\iota=1$) or not ($\eta^\iota=0$) to the network at time
$\ell$.  Our goal is to minimize the energy cost while remaining as
close as possible to the prescribed power demand. Thus, we minimize
the overall cost
\begin{align}
	\mathcal C(\bm P,\bm \eta):=\sum_{j=1}^{n_1}g^j(P^j)
	+\sum_{\iota=1}^{n_2} \eta^\iota h^\iota(S^\iota)
	+c\bigg(\sum_{j=1}^{n_1}P^j+\sum_{\iota=1}^{n_2}\eta^\iota
	S^\iota-D\bigg)^2
	\label{dispatch:objective}
\end{align}
where $\bm P:=(P^1,\ldots,P^{n_1})$, $\bm
\eta:=(\eta^1,\ldots,\eta^{n_2})$, $g^j$ and $h^\iota$ are cost
functions for the power provided by generator $j$ and storage unit
$\iota$, respectively.  We treat the deviation of the injected power
from its prescribed demand as a soft constraint by assigning it a
quadratic cost with weight~$c$ and augmenting the overall cost
function~\eqref{dispatch:objective}. Due to the uncertainty about the
batteries' state and their injected powers $S^\iota$,
the minimization of~\eqref{dispatch:objective} is a stochastic
problem.

\subsection{Battery dynamics and observation model}

Each battery is modeled as a single-cell dynamic element and we
consider its current $I^\iota$ discharging over the operation interval
(if connected to the network) as a fixed and a priori known function
of time. Its dynamics is conveniently approximated by the equivalent
circuit in Figure~\ref{fig:equivalent:circuit}(a) (see e.g.,
\cite{ML:17,BYL-GN-RGJ-DHD:04}),
%
\begin{figure}
	\begin{center}
		\centering
		\subfigure[]{\includegraphics[width=.433\linewidth]{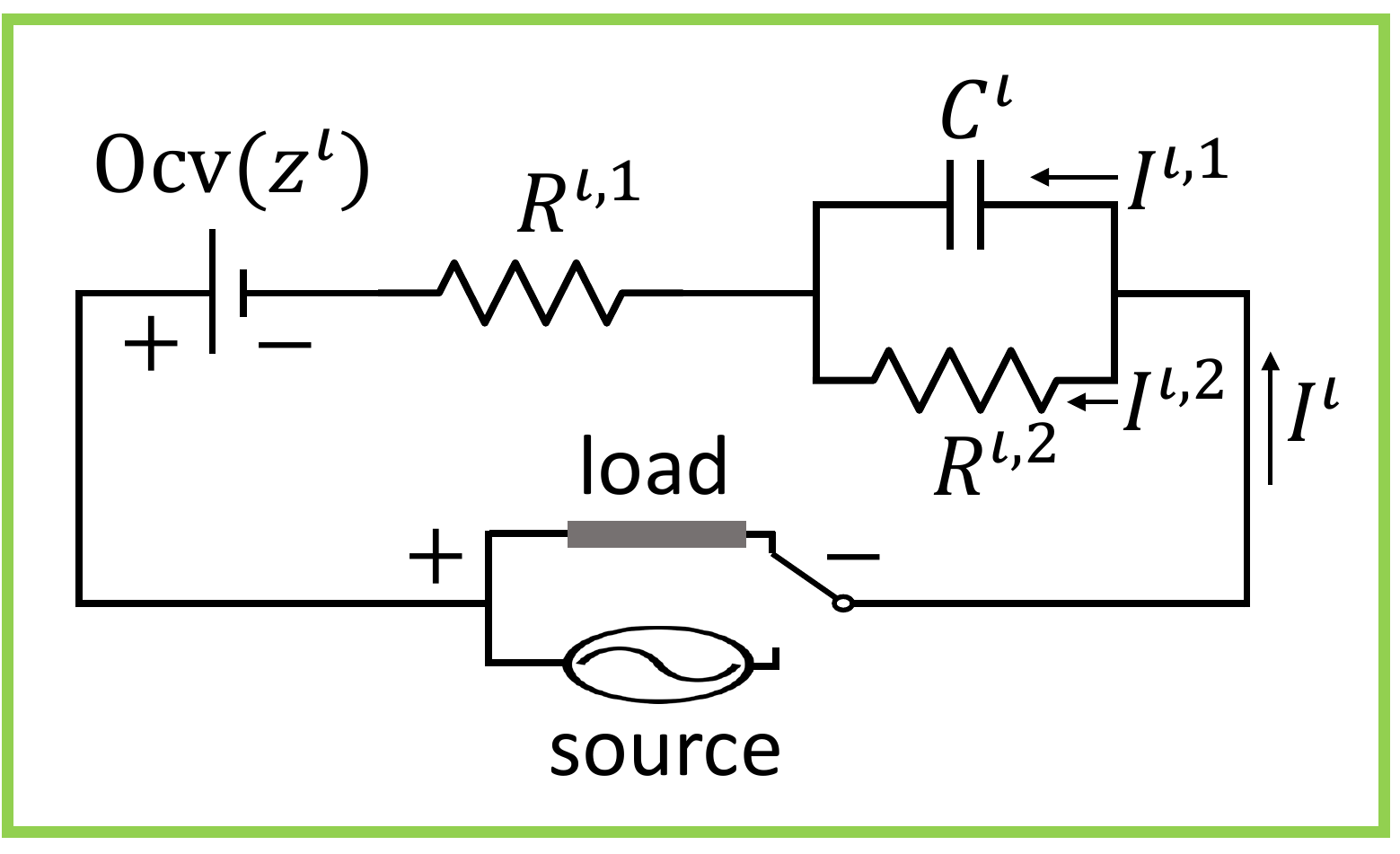}}
		%
		\hspace{2em}
		\subfigure[]{\includegraphics[width=.33\linewidth]{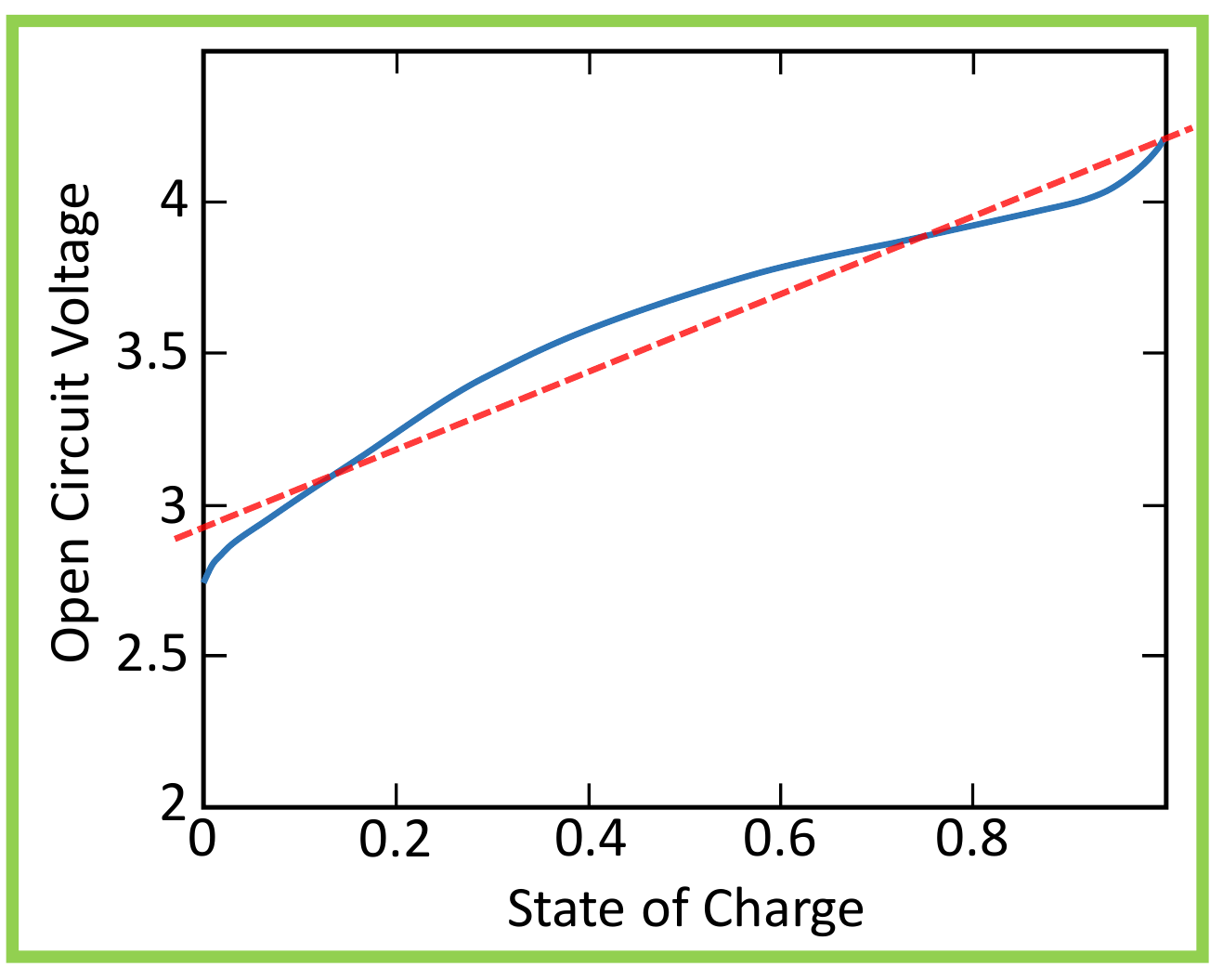}} 
		\caption{(a) shows the equivalent circuit model of a lithium-ion
			battery cell in discharging mode (c.f. \cite[Figure
			2]{BYL-GN-RGJ-DHD:04},\cite[Figure 1]{ML:17}). (b) is taken from
			\cite[Figure 3]{ML:17} and shows the nonlinear dependence of the
			open circuit voltage on the state of charge and its affine
			approximation.}\label{fig:equivalent:circuit}
	\end{center}
\end{figure}
where $z^\iota$ is the state of charge (SoC) of the cell and ${\rm
	Ocv}(z^\iota)$ is its corresponding open-circuit voltage, which we
approximate by the affine function $\alpha^\iota z^\iota+\beta^\iota$
in Figure~\ref{fig:equivalent:circuit}(b). The associated
discrete-time cell model is
\begin{align*}
	\chi_{k+1}^\iota\equiv \left(
	\begin{matrix}
		I_{k+1}^{\iota,2}
		\\
		z_{k+1}^\iota
	\end{matrix}\right)
	& = \left(
	\begin{matrix}
		a^\iota & 0
		\\
		0 & 1
	\end{matrix}
	\right)\left(
	\begin{matrix}
		I_k^{\iota,2} 
		\\
		z_k^\iota
	\end{matrix}
	\right) + \left(
	\begin{matrix}
		1-a^\iota
		\\
		-\delta t/Q^\iota
	\end{matrix}
	\right) I_k^\iota
	\\
	\theta_k^\iota\equiv V_k^\iota & = \alpha^\iota
	z_k^\iota+\beta^\iota
	-I_k^{\iota}R^{\iota,1}-I_k^{\iota,2}R^{\iota,2}
\end{align*}
where $a^\iota:=e^{-\delta t/(R^{2,\iota}C^\iota)}$, $\delta t$ is the
time discretization step, and $Q^\iota$ is the cell capacity.  Here,
we assume that for all $k\in[0:\ell]$ the cell is neither fully
charged or discharged (by e.g., requiring that
$0<z_0-\sum_{k=0}^{\ell-1}\delta tI_k^\iota/Q^\iota<1$ for all $k$ and
any candidate initial conditions and input currents) and so, the
evolution of its voltage is accurately represented by the above
difference equation. The initial condition comprising of the SoC
$z_0^\iota$ and the current $I_0^{\iota,2}$ through
$R^{\iota,2}$ is random with an unknown probability
distribution. We also consider additive measurement noise with an
unknown distribution, namely, we measure
\begin{align*}
	\theta_k^\iota = \alpha^\iota z_k^\iota+\beta^\iota
	-I_k^{\iota}R^{\iota,1}-I_k^{\iota,2}R^{\iota,2}+v_k.
\end{align*}   
To track the evolution of each random element through a linear system
of the form~\eqref{eq:data}, we consider for each battery a nominal
state trajectory $\chi_k^{\iota,\star}=(I_k^{\iota,2,\star},
z_k^{\iota,\star})$ initiated from the center of the support of its
initial-state distribution. Setting
$\xi_k^\iota=\chi_k^\iota-\chi_k^{\iota,\star}$ and
$\zeta_k^\iota=\theta_k(\chi_k^\iota)-\theta_k(\chi_k^{\iota,\star})$,
\begin{align*}
	\xi_{k+1}^\iota & =A_k^\iota\xi_k^\iota 
	\\
	\zeta_k^\iota & =H_k^\iota\xi_k^\iota+v_k,
\end{align*}
where $A_k^\iota:=\diag{a,1}$ and $H_k^\iota:=(\alpha^\iota,-R^{\iota,2})$.
Denoting $\bm\xi:=(\xi^1,\ldots,\xi^{n_2})$ and
$\bm\zeta:=(\zeta^1,\ldots,\zeta^{n_2})$, we obtain a system of the
form~\eqref{eq:data} for the dynamic random variable~$\bm\xi$. Despite
the fact that the state distribution $\bm \xi_k$ of the batteries
across time is unknown, we assume having access to output data from $N$
independent realizations of their dynamics over the horizon
$[0:\ell]$. Using these samples we exploit the results of the paper to
build an ambiguity ball $\P^N$ of radius $\eps_N$ in the 2-Wasserstein
distance (i.e., with $p=2$), that contains the batteries' state
distribution $P_{\bm \xi_\ell}$ at time $\ell$ with prescribed
probability $1-\beta$. In particular, we take the samples from each
realization $i\in[1:N]$ and use an observer to estimate its state
$\widehat{\bm\xi}_\ell^i$ at time $\ell$. The ambiguity set is
centered at the estimator-based empirical distribution $\widehat
P_{\bm\xi_\ell}^N=
\frac{1}{N}\sum_{i=1}^N\delta_{\widehat{\bm\xi}_\ell^i}$ and its
radius can be determined using Theorem~\ref{thm:ambiguity:set:noise}
and Proposition~\ref{prop:hatepsN:guarantees}.

\subsection{Decision problem as a distributionally robust
	optimization (DRO) problem}
To solve the decision problem regarding whether or not to connect the
batteries for economic dispatch, we formulate a distributionally
robust optimization problem for the cost~\eqref{dispatch:objective}
using the ambiguity set~$\P^N$. To do this, we derive an explicit
expression of how the cost function $\mathcal C$ depends on the
stochastic argument $\bm\xi_\ell$. Notice first that the power injected by 
each 
battery at time $\ell$ is
\begin{align*}
	S^\iota =I_\ell^\iota V_\ell^\iota & = I_\ell^\iota\big(\alpha^\iota
	z_\ell^\iota+\beta^\iota -I_\ell^{\iota}R^{\iota,1}-I_\ell^{\iota,2}
	R^{\iota,2}\big)
	\\
	& = \langle(-I_\ell^\iota R^{\iota,2},\alpha^\iota I_\ell^\iota),
	\chi_\ell^\iota \rangle+\beta^\iota
	I_\ell^\iota-(I_\ell^\iota)^2R^{\iota,1}=
	\langle\widehat\alpha^\iota,\xi_\ell^\iota\rangle+\widehat\beta^\iota
	\equiv(\widehat\alpha^\iota)^\top\xi_\ell^\iota+\widehat\beta^\iota,
\end{align*}
with $ \widehat\alpha^\iota :=(-I_\ell^\iota R^{\iota,2},\alpha^\iota
I_\ell^\iota)$ and
\begin{align*}
	\widehat\beta^\iota
	:=\langle\widehat\alpha^\iota,\chi_\ell^{\iota,\star}\rangle
	+I_\ell^\iota\beta^\iota-(I_\ell^\iota)^2R^{\iota,1} =I_\ell^\iota
	I_\ell^{\iota,2,\star}R^{\iota,2}-\alpha^\iota I_\ell^\iota
	z_\ell^{\iota,\star}+I_\ell^\iota\beta^\iota-(I_\ell^\iota)^2R^{\iota,1}.
\end{align*}
Considering further affine costs $h^\iota(S):=\bar \alpha^\iota
S+\bar\beta^\iota$ for the power provided by the batteries, the
overall cost $\mathcal C$ becomes
\begin{align}
	\mathcal C(\bm P,\bm\eta)=g(\bm P)
	+(\bm\eta*\bm{\widetilde\alpha})^\top\bm\xi_\ell
	+\bm\eta^\top\bm{\widetilde\beta} +c\big(\bone^\top\bm P
	+(\bm\eta*\bm{\widehat\alpha})^\top\bm\xi_\ell
	+\bm\eta^\top\bm{\widehat\beta}-D\big)^2, \label{dispatch:objectie:equiv}
\end{align}
where $*$ denotes the Khatri-Rao product (cf. Section~\ref{sec:prelims}) and
\begin{align*} 
	g(\bm P) & :=\sum_{j=1}^{n_1}g^j(P^j),\quad \bm{\widehat\alpha}
	:=(\widehat\alpha^1,\ldots,\widehat\alpha^{n_2}),\quad
	\bm{\widehat\beta}:=(\widehat\beta^1,\ldots,\widehat\beta^{n_2}), \\
	\bm{\widetilde\alpha} & :=(\bar\alpha^1\widehat\alpha^1,\ldots,
	\bar\alpha^{n_2}\widehat\alpha^{n_2}),\quad \bm{\widetilde\beta}
	:=(\bar\alpha^1\widehat\beta^1+\bar\beta^1,\ldots,
	\bar\alpha^{n_2}\widehat\beta^{n_2}+\bar\beta^{n_2}).
\end{align*}   
Using the equivalent description \eqref{dispatch:objectie:equiv} for
$\mathcal C$ and recalling the upper and lower bounds $P_{\min}^j$ and
$P_{\max}^j$ for the generator's power, we formulate the DRO power
dispatch problem
\begin{subequations}\label{dispacth:overall}
	\begin{align}
		\inf_{\bm \eta,\bm P}\; & \Big\{f_{\bm\eta}(\bm P)
		+\sup_{P_{\bm\xi_\ell}\in\P^N}\bE_{P_{\bm\xi_\ell}}
		\big[h_{\bm\eta}(\bm P,\bm\xi_\ell)\big]\Big\}, 
		\label{dispatch:objective:robust}  \\
		\textup{s.t.}\; & P_{\min}^j\le P^j \le
		P_{\max}^j\quad\forall j\in[1:n_1], \label{dispatch:constraint}
	\end{align}
\end{subequations}
with the ambiguity set $\P^N$ introduced above and
\begin{align*}
	f_{\bm\eta}(\bm P) & :=g(\bm P)+c\bm P^\top\bone\bone^\top\bm P
	+2c(\bm\eta^\top\bm{\widehat\beta}-D)\bone^\top\bm P +
	c(\bm\eta^\top\bm{\widehat\beta}-D)^2+\bm\eta^\top\bm{\widetilde\beta}
	\\
	h_{\bm \eta}(\bm P,\bm \xi_\ell) & :=
	c\bm\xi_\ell^\top(\bm\eta*\bm{\widehat\alpha})
	(\bm\eta*\bm{\widehat\alpha})^\top\bm\xi_\ell
	+\big(2c\big(\bone^\top\bm P+\bm\eta^\top\bm{\widehat\beta}-D)
	(\bm\eta*\bm{\widehat\alpha})^\top
	+(\bm\eta*\bm{\widetilde\alpha})^\top\big)\bm\xi_\ell,
\end{align*}
This formulation aims to minimize the worst-case expected cost with
respect to the plausible distributions of $\bm\xi$ at time~$\ell$.

\subsection{Tractable reformulation of the DRO problem}

Our next goal is to obtain a tractable reformulation of the
optimization problem~\eqref{dispacth:overall}. To this end, we first
provide an equivalent description for the inner maximization in
\eqref{dispacth:overall}, which is carried out over a space of
probability measures. Exploiting strong duality (see~\cite[Corollary
2(i)]{RG-AJK:16} or~\cite[Remark 1]{JB-KM:19}) and recalling that our
ambiguity set $\mathcal P^N$ is based on the 2-Wasserstein distance,
we equivalently write the inner maximization problem
$\sup_{P_{\bm\xi_\ell}\in\P^N} \bE_{P_{\bm\xi_\ell}}
\big[h_{\bm\eta}(\bm P,\bm\xi_\ell)\big]$ as
\begin{align}
	\inf_{\lambda \ge 0}\bigg\{\lambda \psi_N^2
	+\frac{1}{N}\sum_{i=1}^N\sup_{\xi_\ell\in\Xi}\{h_{\bm\eta}(\bm
	P,\bm\xi_\ell)
	-\lambda\|\bm\xi_\ell-\widehat{\bm\xi}_\ell^i\|^2\}\bigg\},
	\label{inner:maximization}
\end{align}
where $\psi_N\equiv\psi_N(\beta)$ is the radius of the
ambiguity ball, $\Xi\subset\Rat{2n_2}$ is the support of the
batteries' unknown state distribution, and the
$\widehat{\bm\xi}_\ell^i$ are the estimated states of their
realizations. We slightly relax the problem, by allowing the ambiguity
ball to contain all distributions with distance $\psi_N$ from
$\widehat P_{\bm\xi_\ell}^N$ that are supported on $\Rat{2n_2}$ and
not necessarily on $\Xi$. Thus, we first look to solve for each
estimated state $\widehat{\bm\xi}_\ell^i$ the optimization problem
\begin{align*}
	\sup_{\xi_\ell\in\Rat{2n_2}}\{h_{\bm\eta}(\bm P,\bm\xi_\ell)
	-\lambda\|\bm\xi_\ell-\widehat{\bm\xi}_\ell^i\|^2\},
\end{align*} 
which is written
\begin{align*}
	&
	\sup_{\bm\xi_\ell\in\Rat{2n_2}}\big\{\bm\xi_\ell^\top\fA\bm\xi_\ell
	+\big(2c\big(\bone^\top\bm P +\bm\eta^\top\bm{\widehat\beta}-D)
	(\bm\eta*\bm{\widehat\alpha})^\top
	+(\bm\eta*\bm{\widetilde\alpha})^\top\big)\bm\xi_\ell \\
	&\hspace{21.45em}
	-\lambda(\bm\xi_\ell-\widehat{\bm\xi}_\ell^i)^\top
	(\bm\xi_\ell-\widehat{\bm\xi}_\ell^i)\big\}
	\\
	& = -
	\lambda(\widehat{\bm\xi}_\ell^i)^\top\widehat{\bm\xi}_\ell^i+\sup_{\bm\xi_\ell\in\Rat{2n_2}}
	\big\{\bm\xi_\ell^\top(\fA-\lambda I_{2n_2})\bm\xi_\ell
	\\
	&\hspace{10.5em}+\big(2c\big(\bone^\top\bm
	P+\bm\eta^\top\bm{\widehat\beta}-D)
	(\bm\eta*\bm{\widehat\alpha})^\top
	+(\bm\eta*\bm{\widetilde\alpha})^\top
	+2\lambda(\widehat{\bm\xi}_\ell^i)^\top\big)\bm\xi_\ell\big\}
	\\
	& = - \lambda(\widehat{\bm\xi}_\ell^i)^\top\widehat{\bm\xi}_\ell^i
	+\sup_{\bm\xi_\ell\in\Rat{2n_2}}\big\{\bm\xi_\ell^\top(\fA-\lambda
	I_{2n_2}) \bm\xi_\ell+(\bm r^i)^\top\bm\xi_\ell\big\}
\end{align*}
where $\bm r^i\equiv \bm r_{\bm\eta}^i(\bm
P,\lambda):=2c(\bone^\top\bm P+\bm\eta^\top\bm{\widehat\beta}-D)
(\bm\eta*\bm{\widehat\alpha}) + \bm\eta*\bm{\widetilde\alpha}
+2\lambda\widehat{\bm\xi}_\ell^i$ and $\fA\equiv \fA_{\bm\eta}:= c
(\bm\eta*\bm{\widehat\alpha}) (\bm\eta*\bm{\widehat\alpha})^\top$ is a
symmetric positive semi-definite matrix with diagonalization
$\fA=\fQ^\top\fD\fQ$ where the eigenvalues decrease along the
diagonal. Hence, we get
\begin{align*}
	& \sup_{\bm\xi_\ell\in\Rat{2n_2}} \big\{\bm\xi_\ell^\top(\fA-\lambda
	I_{2n_2})\bm\xi_\ell+(\bm r^i)^\top\bm\xi_\ell\big\} \\
	& \hspace{10em} =\sup_{\bm\xi_\ell\in\Rat{2n_2}}
	\big\{\bm\xi_\ell^\top(\fQ^\top\fD\fQ-\fQ^\top\lambda
	I_{2n_2}\fQ)\bm\xi_\ell+(\bm r^i)^\top\bm\xi_\ell\big\}
	\\
	& \hspace{10em}=\sup_{\bm\xi\in\Rat{2n_2}}
	\big\{\bm\xi^\top(\fD-\lambda I_{2n_2})\bm\xi+(\bm{\widehat
		r}^i)^\top\bm\xi\big\}
\end{align*}
with $\bm{\widehat r}^i:=\fQ\bm r^i$ and denoting
$\lambda_{\max}(\fA)$ the maximum eigenvalue of $\fA$ we have
\begin{align}
	& \sup_{\bm\xi\in\Rat{2n_2}} \big\{\bm\xi^\top(\fD-\lambda
	I_{2n_2})\bm\xi+(\bm{\widehat r}^i)^\top\bm\xi\big\}=
	\begin{cases}
		\infty & {\rm if}\;0\le\lambda<\lambda_{\max}(\fA) \\
		\frac{1}{4}(\bm{\widehat r}^i)^\top(\lambda
		I_{2n_2}-\fD)^{-1}\bm{\widehat r}^i & {\rm
			if}\;\lambda>\lambda_{\max}(\fA).
	\end{cases} \label{inner:sup:removal}
\end{align}
To obtain this we exploited that $Q(\bm \xi):=\bm\xi^\top(\fD-\lambda 
I_{2n_2})\bm\xi+(\bm{\widehat r}^i)^\top\bm\xi$ is maximized when 
\begin{align*}
	\nabla Q(\bm \xi_\star)=0\iff 2(\fD-\lambda
	I_{2n_2})\bm\xi_\star+\bm{\widehat
		r}^i=0\iff\bm\xi_\star=\frac{1}{2}(\lambda
	I_{2n_2}-\fD)^{-1}\bm{\widehat r}^i,
\end{align*}
which gives the optimal value
$Q(\bm\xi_\star)=\frac{1}{4}(\bm{\widehat r}^i)^\top(\lambda
I_{2n_2}-\fD)^{-1}\bm{\widehat r}^i$. Note that we do not need to
specify the value of the expression in \eqref{inner:sup:removal} for
$\lambda=\lambda_{\max}$. In particular, since the function we
minimize in \eqref{inner:maximization} is convex in $\lambda$, the
inner part of the DRO problem is equivalently written
\begin{align*}
	\inf_{\lambda>\lambda_{\max}(\fA)}\bigg\{\lambda\bigg( \psi_N^2-
	\frac{1}{N}\sum_{i=1}^N(\widehat{\bm\xi}_\ell^i)^\top\widehat{\bm\xi}_\ell^i\bigg)
	+\frac{1}{4N}\sum_{i=1}^N \bm{\widehat r}_{\bm\eta}^i(\bm
	P,\lambda)^\top(\lambda I_{2n_2}-\fD)^{-1} \bm{\widehat
		r}_{\bm\eta}^i(\bm P,\lambda) \bigg\}.
\end{align*}
Taking further into account that
\begin{align*}
	(\lambda I_{2n_2}-\fD)^{-1}={\rm diag}\Big(\frac{1}{\lambda
		-\lambda_{\max}(\fA)},\ldots,\frac{1}{\lambda-\lambda_{\min}(\fA)}\Big),
\end{align*}
as well as the constraints \eqref{dispatch:constraint} on the decision
variable $\bm P$, the overall DRO problem is reformulated as
\begin{subequations} \label{DRO:reformulated}
	\begin{align}
		\min_{\bm\eta}\inf_{\bm P,\lambda} & \bigg\{f_{\bm\eta}(\bm
		P)+\lambda\bigg(\psi_N^2-
		\frac{1}{N}\sum_{i=1}^N(\widehat{\bm\xi}_\ell^i)^\top\widehat{\bm\xi}_\ell^i\bigg)
		+\frac{1}{4N}\sum_{i=1}^N \bm{\widehat r}_{\bm\eta}^i(\bm
		P,\lambda)^\top \\
		& \hspace{3.5em}\times{\rm
			diag}\Big(\frac{1}{\lambda-\lambda_{\max}(\fA)},
		\ldots,\frac{1}{\lambda -\lambda_{\min}(\fA)}\Big) \bm{\widehat
			r}_{\bm\eta}^i(\bm P,\lambda) \bigg\} \nonumber \\
		\textup{subject to}\; &
		P_{\min}^j\le P^j \le P_{\max}^j\quad\forall j\in[1:n_1]\\
		& \lambda>\lambda_{\max}(\fA). \nonumber
	\end{align}
\end{subequations}

\subsection{Simulation results}

For the simulations we consider $n_1=4$ generators and $n_2=3$
batteries with the same characteristics. We assume that the
distributions of each initial SoC $z_0^\iota$ and current
$I_0^{\iota,2}$ are known to be supported on the intervals $[0.45,
0.9]$ and $[1.5,1.7]$, respectively. The true SoC distribution for
batteries 2 and 3 at time zero is $P_{z_0^2}=P_{z_0^3}=\mathcal
U[0.45,0.65]$ ($\mathcal U$ denotes uniform distribution). On the
other hand, the provider of battery 1 has access to the distinct
batteries 1A and 1B and selects randomly one among them with
probabilities 0.9 and 0.1, respectively. The SoC distribution of
battery 1A at time zero is $P_{z_0^{1A}}=\mathcal U[0.45,0.65]$,
whereas that of battery 1B is $P_{z_0^{1B}}=\mathcal U[0.84,0.86]$.
Thus, we get the bimodal distribution $P_{z_0^1}=0.9\mathcal
U[0.45,0.65]+0.1\mathcal U[0.84,0.86]$, which is responsible for
non-negligible empirical distribution variations, since for small
numbers of samples, it can fairly frequently occur that the relative
percentage of samples from 1B deviates significantly from its expected
one. On the other hand, we assume that the true initial currents
$I_0^{\iota,2}$ of all batteries are fixed to 1.6308, namely,
$P_{I_0^{1,2}}=P_{I_0^{2,2}}=P_{I_0^{3,2}}=\delta_{1.6308}$.
For the measurements, we consider the Gaussian mixture noise model 
$P_{v_k}=0.5\mathcal N(0.01,0.01^2)+0.5\mathcal N(-0.01,0.01^2)$ with 
$\mathcal N(\mu,\sigma^2)$ denoting the normal distribution with mean $\mu$ 
and 
variance $\sigma^2$.  

\begin{figure}[]
	\begin{center}
		\centering
		\includegraphics[width=.85\linewidth]{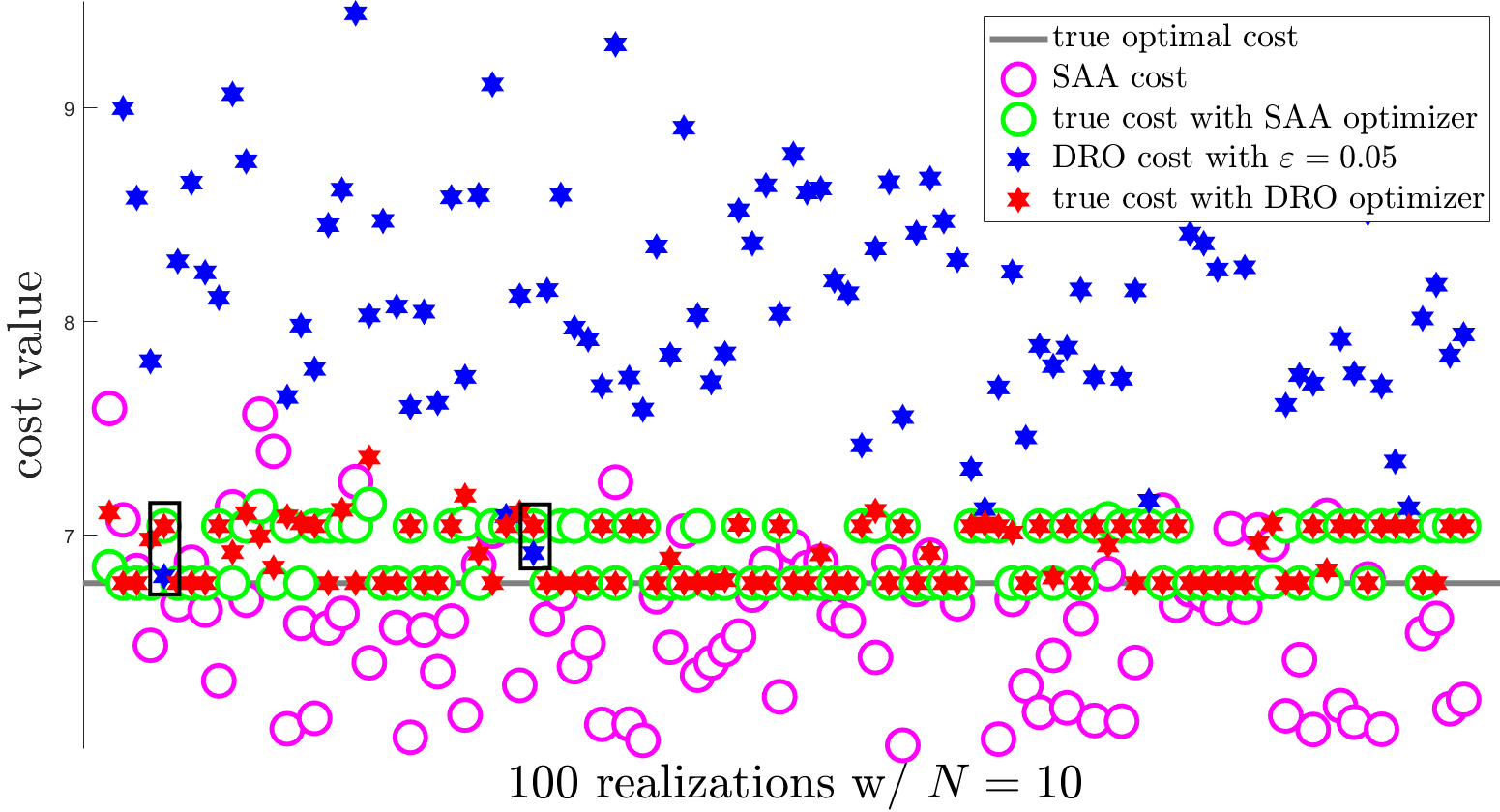}
		\caption{Results from 100 realizations of the power dispatch
			problem with $N=10$ independent samples used for each
			realization. We compute the optimizers of the SAA and DRO
			problems, plot their corresponding optimal values (termed ``SAA
			cost'' and ``DRO cost''), and also evaluate their 
			performance with respect to the true distribution 
			(``true cost with SAA optimizer'' and ``true cost with DRO
			optimizer'').  With the exception of two realizations (whose DRO
			cost and true cost with the DRO optimizer are framed inside black 
			boxes), the DRO cost is above the true cost of the DRO 
			optimizer, namely, this happens with high probability. From the plot, 
			it is also clear that the SAA solution tends to over-promise since its
			value is most frequently below the true cost of the SAA
			optimizer.}\label{fig:SAAvsDRO:Neq10}
	\end{center}
\end{figure}
%
%
To compute the ambiguity radius for the reformulated DRO problem
\eqref{DRO:reformulated}, we specify its nominal and noise components
$\eps_N(\subscr{\beta}{nom},\rho_{\bm\xi_\ell})$ and
$\widehat\eps_N(\subscr{\beta}{ns})$, where due to
Proposition~\ref{prop:nominal:radius}, $\rho_{\bm\xi_\ell}$ can be
selected as half the diameter of any set containing the support of
$P_{\bm\xi_\ell}$ in the infinity norm. It follows directly from the
specific dynamics of the batteries that $\rho_{\bm\xi_\ell}$ does not
exceed half the diameter of the initial conditions' distribution
support, which is isometric to $[0.45, 0.9]^3\times
[1.5,1.7]^3\subset\Rat{6}$. Hence, using 
Proposition~\ref{prop:explicit:ambiguity:radius} with $p=2$, $d=6$, and
$\rho_{\bm\xi_\ell}=0.225$, we obtain
\begin{align*}
	\eps_N(\subscr{\beta}{nom},\rho_{\bm\xi_\ell})
	%
	=4.02N^{-\frac{1}{6}}+1.31(\ln\subscr{\beta}{nom}^{-1})^{\frac{1}{4}}N^{-\frac{1}{4}}
	.
\end{align*}
To determine the noise radius, we first compute lower and upper bounds
$m_v$ and $M_v$ for the $L_2$ norm of the Gaussian mixture noise $v_k$
and an upper bound $C_v$ for its $\psi_2$ norm. Denoting by $\bE_P$
the integral with respect to the distribution $P$, we have for
$P_{v_k}=0.5\mathcal N(\mu_1,\sigma_1^2)+0.5\mathcal
N(\mu_2,\sigma_2^2)$ that
$\|v_k\|_2^2=\bE_{\frac{1}{2}(P_1+P_2)}\big[v_k^2\big]=\frac{1}{2}(\mu_1^2
+\sigma_1^2+\mu_2^2+\sigma_2^2)$, where $P_1=\mathcal
N(\mu_1,\sigma_1^2)$, $P_2=\mathcal N(\mu_2,\sigma_2^2)$ and we used
the fact that
$\bE_{P_i}\big[v_k^2\big]=\mu_i^2+\bE_{P_i}\big[(v_k-\mu_i)^2\big]
=\mu_i^2+\sigma_i^2$. Hence, in our case, where $\mu_i=\sigma_i=0.01$,
we can pick $m_v=M_v=0.01\sqrt{2}$. Further, using 
Proposition~\ref{prop:Gaussian:mixture:psi2:norm}, we can select
$C_v=0.01(\sqrt{8/3}+\sqrt{\ln 2})$.
To perform the state estimation from the output samples we used a Kalman 
filter. Its initial condition covariance matrix corresponds to independent 
Gaussian  distributions for each SoC $z_0^\iota$ and current 
$I_0^{\iota,2}$ with a standard deviation of the order of their assumed 
support. 
%
%
We also select the same covariance as in the components of the
Gaussian mixture noise to model the measurement noise of the Kalman
filter. Using the dynamics of the filter and the values of $m_v$,
$M_v$, and $C_v$ above, we obtain from \eqref{frakMw}, 
\eqref{frakMv}-\eqref{frakmv}, and \eqref{constant:frakR} the constants 
$\fM_w=0.325$, $\fM_v=0.008$, and $\fR=2.72$ for the expression of the noise
radius. In particular, we have from
Proposition~\ref{prop:hatepsN:guarantees} that
$\widehat\eps_N(\subscr{\beta}{ns})=0.47
+0.0113\sqrt{74.98/N\ln(2/\subscr{\beta}{ns})}$ and the overall
radius~is
\begin{align}
	\psi_N(\beta)=0.47+4.02N^{-\frac{1}{6}} 
	+1.31(\ln\subscr{\beta}{nom}^{-1})^{\frac{1}{4}}N^{-\frac{1}{4}}
	+0.0973(\ln(2\subscr{\beta}{ns}^{-1}))^{\frac{1}{2}}N^{-\frac{1}{2}}. 
	\label{psiN:example}
\end{align}

\begin{figure}[]
	\begin{center}
		\centering
		\includegraphics[width=.85\linewidth]{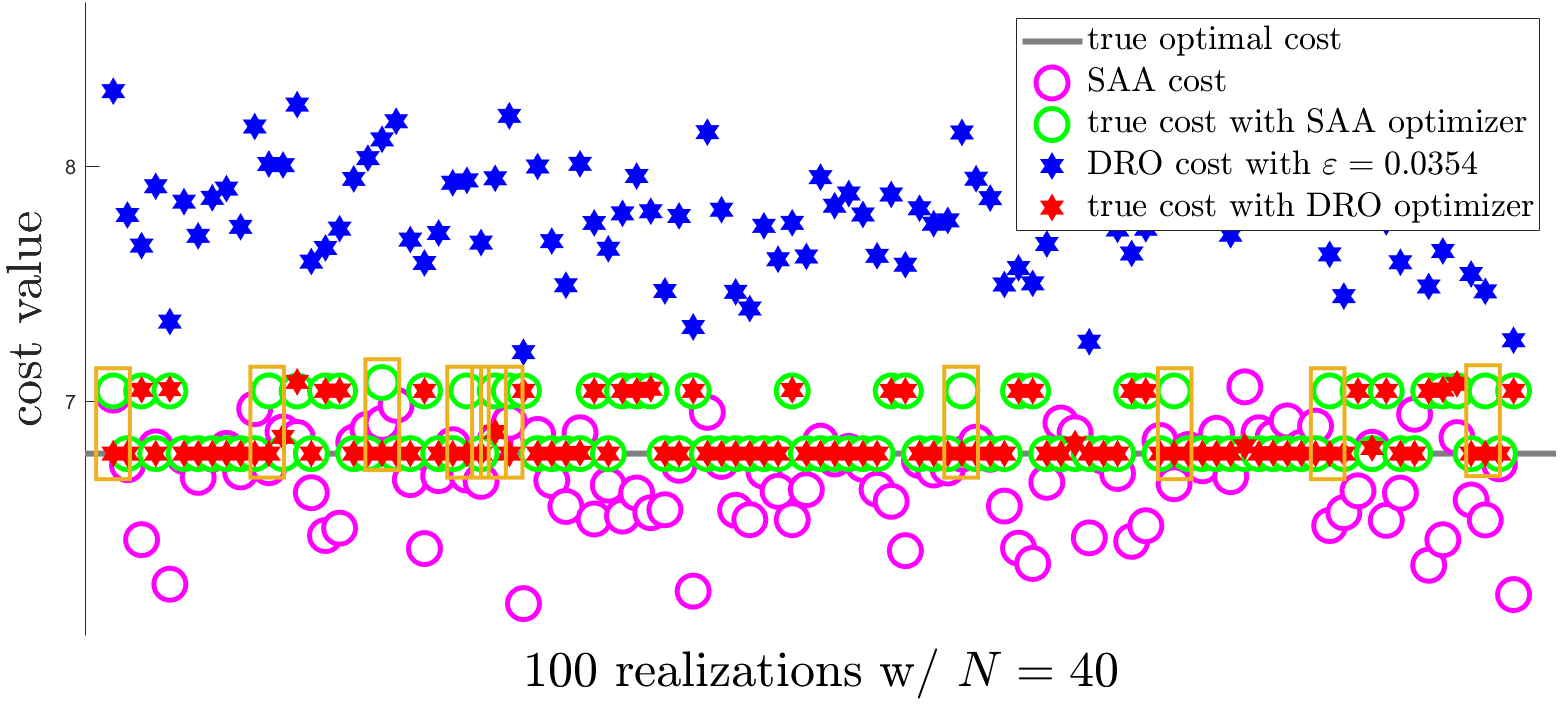}
		\\ (a) \\
		\includegraphics[width=.85\linewidth]{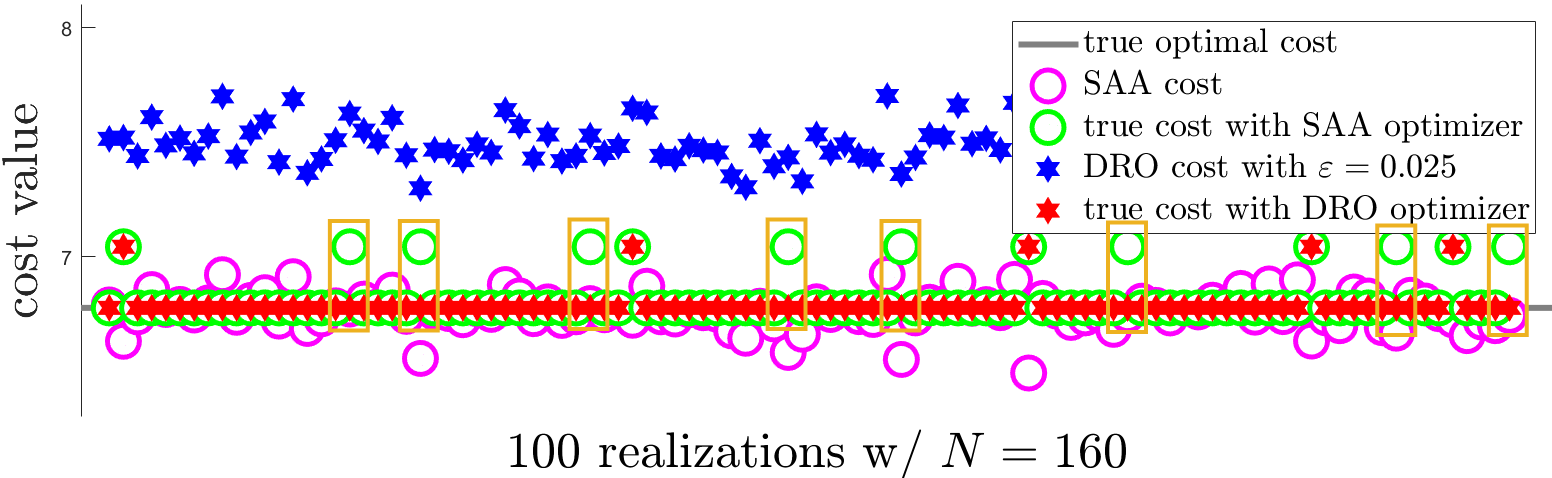}
		\\ (b)
		\caption{Analogous results to those of
			Figure~\ref{fig:SAAvsDRO:Neq10}, from 100 realizations with (a)
			$N=40$ and (b) $N=160$ independent samples, and the ambiguity
			radius tuned so that the same confidence level is preserved.  In
			both cases, the DRO cost is above the true cost of the DRO
			optimizer with high probability (in fact, always). Furthermore,
			the cost of the DRO optimizer (red star) is strictly
			better than the true cost of the SAA one (green circle) for
			a considerable number of realizations (highlighted in the
			illustrated boxes).}
		\label{fig:SAAvsDRO:Neq40}
	\end{center}
\end{figure}


We assume that the energy cost of the generators is lower than that of
the batteries and select the quadratic power generation cost $g(\bm
P)=0.25\sum_{j=1}^4(P^j-0.1)^2$ and the same lower/upper power
thresholds $P_{\min}^j=0.2$/$P_{\max}^j=0.5$ for all generators.  For
the batteries, we pick the same resistances $R^{\iota,1}=0.34$ and
$R^{\iota,2}=0.17$, and we take $a^\iota=0.945$ and $I_k^\iota=8$ for
all times. We nevertheless use different linear costs
$h^\iota(S)=\bar\alpha^\iota S$ for their injected powers, with
$\bar\alpha^1=1$ and $\bar\alpha^2=\bar\alpha^3=1.3$, since battery~1
is less reliable due to the large SoC fluctuation among its two modes.

We solve 100 independent realizations of the overall economic dispatch
problem. For each of them, we generate independent samples from the
batteries' initial condition distributions and solve the associated
sample average approximation (SAA) and DRO problems for $N=10$,
$N=40$, and $N=160$ samples, respectively, using
CVX~\cite{website:cvx}.  It is worth noting that the radius $\psi_N$
given by~\eqref{psiN:example} is rather conservative. The main reasons
for this are 1) conservativeness of the concentration of measure
results used for the derivation of the nominal radius, 2) lack of
homogeneity of the distribution's support (the a priori known support
of the $I_0^{\iota,2}$ components is much smaller than that of the
$z_0^{\iota}$ ones), 3) independence of the batteries' individual
distributions, which we have not exploited, and 4) conservative upper
bounds for the estimation error. Although there is room to sharpen all
these aspects, it requires multiple additional contributions and lies
beyond the scope of the paper. Nevertheless, the
formula~\eqref{psiN:example} gives a qualitative intuition about the
decay rates for the ambiguity radius. In particular, it indicates that
under the same confidence level and for small sample sizes, an
ambiguity radius proportional to $N^{-\frac{1}{4}}$ is a reasonable
choice. Based on this, we selected the ambiguity radii $0.05$,
$0.0354$, and $0.025$ for $N=10$, $N=40$, and $N=160$. The associated
simulation results are shown in
Figures~\ref{fig:SAAvsDRO:Neq10},~\ref{fig:SAAvsDRO:Neq40}(a),
and~\ref{fig:SAAvsDRO:Neq40}(b), respectively.  We plot there the
optimal values of the SAA and DRO problems (termed ``SAA cost'' and
``DRO cost'') and provide the expected performance of their respective
decisions with respect to the true distribution (``true cost with
SAA optimizer'' and ``true cost with DRO optimizer'').
We observe that in all three cases, the DRO value is above the
true cost of the DRO optimizer for nearly all realizations (and
for all when $N$ is 40 or 160), which verifies the out-of-sample
guarantees that we seek in DRO formulations \cite[Theorem
3.5]{PME-DK:17}. In addition, when solving the problem for 40 or 160
samples, we witness a clear superiority of the DRO decision compared
to the one of the non-robust SAA, because it considerably improves the
true cost for a significant number of realizations
(cf. Figure~\ref{fig:SAAvsDRO:Neq40}).

\subsection{Discussion}

The SAA solution tends to consistently promise a better outcome
compared to what the true distribution reveals for the same decision
(e.g., magenta circle being usually under the green circle in all
figures). This rarely happens for the DRO solution, and when it does,
it is only by a small margin.  This makes the DRO approach preferable
over the SAA one in the context of power systems operations where
honoring committments at a much higher cost than anticipated might
result in significant losses, and not fulfilling committments may lead
to penalties from the system operator.

\section{Conclusions} \label{sec:conclusions}

We have constructed high-confidence ambiguity sets for dynamic random
variables using partial-state measurements from independent
realizations of their evolution. In our model, both the dynamics and
measurements are subject to disturbances with unknown probability
distributions. The ambiguity sets are built using an observer to
estimate the full state of each realization and leveraging
concentration of measure inequalities. For systems that are either
time-invariant and detectable, or uniformly observable, we have
established uniform boundedness of the ambiguity radius. To aid the
associated probabilistic guarantees, we also provided auxiliary
concentration of measure results. Future research will include the
consideration of robust state estimation criteria to mitigate the
noise effect on the ambiguity radius, the extension of the results to
nonlinear dynamics, and the construction of ambiguity sets with
information about the moments.

\section{Appendix}




Here we give proofs of various results of the paper.
and provide explicit constants for the ambiguity radius.

\subsection{Technical proofs}

\begin{proof}[Proof of Lemma~\ref{lemma:expectation:inequality}]
	By independence of $X$ and $Y$, their joint distribution
	$P_{(X,Y)}$ is the product measure $P_X\otimes P_Y$ of the
	individual distributions $P_X$ and $P_Y$.
	Thus, from the Fubini theorem~\cite[Theorem 2.6.5]{RBA:72} and
	integrability of $g$, $k$ we get
	\begin{align*}
		\bE[g(X,Y)] & =\int_{\Rat{n_1}\times\Rat{n_2}}g(x,y)dP_{(X,Y)} 
		=\int_{\Rat{n_1}}\int_{\Rat{n_2}}g(x,y)dP_YdP_X \\
		& =\int_{\Rat{n_1}}\bE[g(x,Y)]dP_X=\int_K\bE[g(x,Y)]dP_X \\
		& \ge\int_Kk(x)dP_X=\int_{\Rat{n_1}}k(x)dP_X=\bE[k(X)],
	\end{align*} 
	which concludes the proof.
\end{proof}

\begin{proof}[Proof of Lemma~\ref{lemma:empirical:vs:true}]
	Using \cite[Lemma A.2]{DB-JC-SM:21-tac} to bound the Wasserstein
	distance of two discrete distributions, we get 
	\begin{align*}
	W_p(\widehat P_{\xi_\ell}^N,P_{\xi_\ell}^N) \le\Big(\frac{1}{N}\sum_{i=1}^N\|
	\widehat\xi_{\ell}^i-\xi_{\ell}^i\|^p\Big)^\frac{1}{p} =
	\Big(\frac{1}{N}\sum_{i=1}^N\|e_{\ell}^i\|^p\Big)^\frac{1}{p}.
	\end{align*}
	From~\eqref{error:formula}, we have
	\begin{align*}
		\|e_{\ell}^i\|& =\Big\|\Psi_\ell
		z^i+\sum_{k=1}^\ell\big(\Psi_{\ell,\ell-k+1}
		G_{\ell-k}\omega_{\ell-k}^i+\Psi_{\ell,\ell-k+1}K_{\ell-k}v_{\ell-k}^i\big)
		\Big\|
		\\
		& \le
		\|\Psi_\ell\|\|z^i\|+\sum_{k=1}^\ell\|\Psi_{\ell,\ell-k+1}G_{\ell-k}\|
		{\|\omega_{\ell-k}^i\|}
		+\sum_{k=1}^\ell\|\Psi_{\ell,\ell-k+1}K_{\ell-k}\| {\|v_{\ell-k}^i\|_1}
		\\
		& =: {\fM(z^i,\bm\omega^i)+\fE(\bm v^i),}
	\end{align*}
	with $\fE(\bm v^i)\equiv\fE^i$ given in the statement. Since
	$(a+b)^p\le 2^{p-1}(a^p+b^p)$ for $a,b\ge 0$ and $p\ge 1$,
	\begin{align*}
		W_p(\widehat P_{\xi_\ell}^N,P_{\xi_\ell}^N)\le
		\Big(\frac{1}{N}2^{p-1}
		\sum_{i=1}^N({\fM(z^i,\bm\omega^i)^p+(\fE^i)^p})\Big)^\frac{1}{p}.
	\end{align*}
	Next, using $(a+b)^{\frac{1}{p}}\le a^{\frac{1}{p}}+b^{\frac{1}{p}}$ for 
	$a,b\ge 0$ and $p\ge 1$, we have
	\begin{align}
		W_p(\widehat P_{\xi_\ell}^N,P_{\xi_\ell}^N)\le &
		\Big(\frac{1}{N}2^{p-1}\sum_{i=1}^N{\fM(z^i,\bm\omega^i)^p}\Big)^\frac{1}{p}
		+\Big(\frac{1}{N}2^{p-1}\sum_{i=1}^N{(\fE^i)^p}\Big)^\frac{1}{p}.
		\label{empirical:true:vs:estimated:bound2}
	\end{align}
	Finally, since $(\bm z,\bm \omega)\in
	B_\infty^{Nd}(\rho_{\xi_0})\times B_\infty^{N\ell q}(\rho_w)$, we
	get 
	\begin{align*}
	\fM(z^i,\bm\omega^i)^p\le\|\Psi_\ell\|\sqrt{d}\|z^i\|_{\infty}
	+\sum_{k=1}^\ell\|\Psi_{\ell,\ell-k+1}G_{\ell-k}\|
	\sqrt{q}\|\omega_{\ell-k-1}^i\|_{\infty} \le \fM_w.
	\end{align*}
  	This combined with~\eqref{empirical:true:vs:estimated:bound2}
	yields~\eqref{empirical:true:vs:estimated:bound1}.
\end{proof}

\begin{proof}[Proof of Lemma~\ref{lemma:Orlicz:bounds}]
	From \textbf{H4} in Assumption~\ref{assumption:distributiuon:class},
	we obtain for each summand in~\eqref{frak:Eis} 
	\begin{align*}
		\big\|\|\Psi_{\ell,\ell-k+1}K_{\ell-k}\|\|v_{\ell-k}^i\|_1\big\|_{\psi_p}
		&
		\le\|\Psi_{\ell,\ell-k+1}K_{\ell-k}\|\big(\big\|v_{\ell-k,1}^i\big\|_{\psi_p}
		+\cdots+\big\|v_{\ell-k,r}^i\big\|_{\psi_p}\big) \\
		& \le C_vr\|\Psi_{\ell,\ell-k+1}K_{\ell-k}\|.
	\end{align*}
	%
	Hence, we deduce that 
	\begin{align*}
		\|\fE^i\|_{\psi_p} \le
		\sum_{k=1}^\ell\big\|\|\Psi_{\ell,\ell-k+1}K_{\ell-k}\|
		\|v_{\ell-k}^i\|_1\big\|_{\psi_p} \le
		C_vr\sum_{k=1}^\ell\|\Psi_{\ell,\ell-k+1}K_{\ell-k}\|.
	\end{align*}
	For the $L^p$ bounds, note that $ \|\fE^i\|_p=
	\big\|\sum_{k\in[1:\ell],l\in[1:r]}
	\|\Psi_{\ell,\ell-k+1}K_{\ell-k}\||v_{\ell-k,l}^i|\big\|_p$.  Thus,
	from the inequality $\|\sum_i c_iX_i\|_p\le \sum_i c_i\|X_i\|_p$,
	which holds for any nonnegative $c_i$ and $X_i$ in $L^p$,
	\begin{align*}
		\|\fE^i\|_p\le \sum_{k\in[1:\ell],l\in[1:r]}
		\|\Psi_{\ell,\ell-k+1}K_{\ell-k}\|\|v_{\ell-k,l}^i\|_p,
	\end{align*}
	which, by the upper bound in \textbf{H4} of
	Assumption~\ref{assumption:distributiuon:class},
	implies~\eqref{frakMv}. For the other bound, we exploit linearity of
	the expectation and the inequality $\big(\sum_i c_i\big)^p\ge \sum_i
	c_i^p$, which holds for any nonnegative $c_i$, to get
	\begin{align*}
		\big(\bE\big[(\fE^i)^p\big]\big)^{\frac{1}{p}} \ge
		\bigg(\sum_{k\in[1:\ell],l\in[1:r]}\|\Psi_{\ell,\ell-k+1}K_{\ell-k}\|^p
		\bE\big[|v_{\ell-k,l}^i|\big]^p\bigg)^{\frac{1}{p}}.
	\end{align*} 
	Thus, from the lower bound in \textbf{H4} of
	Assumption~\ref{assumption:distributiuon:class} we also
	obtain~\eqref{frakmv}.
\end{proof}

We next prove Proposition~\ref{prop:concentration:around:mean}, along
the lines of the proof of~\cite[Theorem 3.1.1]{RV:18}, which considers
the special case of sub-Gaussian distributions. We rely on the
following concentration inequality~\cite[Corollary 2.8.3]{RV:18}.

\begin{proposition}\longthmtitle{Bernstein
		inequality}\label{prop:Bernstein:inequality} 
	Let $X_1,\ldots, X_N$ be scalar, mean-zero, sub-exponential,
	independent random variables. Then, for every $t\ge 0$ we have
	\begin{align*}
		\bP\bigg(\Big|\frac{1}{N}\sum_{i=1}^NX_i\Big|\ge t\bigg)
		\le 2\exp \Big(-c'\min\Big\{\frac{t^2}{R^2},\frac{t}{R}\Big\}N\Big),
	\end{align*}   
	where $c'=1/10$ and $R:=\max_{i\in[1:N]}\|X_i\|_{\psi_1}$.
\end{proposition}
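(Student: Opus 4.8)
The plan is to use the exponential-moment (Chernoff) method, the standard route to Bernstein-type concentration. First I would reduce the two-sided bound to a one-sided one: the hypotheses are invariant under $X_i\mapsto -X_i$, so it suffices to control $\bP(\frac1N\sum_i X_i\ge t)$ and then add the symmetric bound for the lower tail, which produces the factor $2$ in the statement. The core ingredient is a moment-generating-function (MGF) estimate for a single centered variable: for mean-zero $X$ with $K:=\|X\|_{\psi_1}$, one has $\bE[\exp(\lambda X)]\le\exp(C^2\lambda^2K^2)$ on a range $|\lambda|\le 1/(CK)$, for an absolute constant $C$ (one may take $C=2$, so the range is $|\lambda|\le 1/(2K)$). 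I would derive this directly from the definition of the Orlicz norm: $\bE[\psi_1(|X|/K)]\le 1$ means $\bE[e^{|X|/K}]\le 2$, and Taylor-expanding the exponential yields the moment bounds $\bE[|X|^k]\le 2\,k!\,K^k$. Expanding $e^{\lambda X}$, discarding the vanishing linear term by mean-zero-ness, and summing the resulting geometric series over $k\ge 2$ then gives the claimed MGF bound on the stated $\lambda$-range.

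Next, by independence the MGF of the sum factorizes as $\bE[\exp(\lambda\sum_i X_i)]=\prod_i\bE[\exp(\lambda X_i)]$. Since $K_i:=\|X_i\|_{\psi_1}\le R$ for every $i$, the single-variable bound together with $\sum_i K_i^2\le N R^2$ combine to give
\begin{align*}
\bE\Big[\exp\Big(\lambda\sum_{i=1}^N X_i\Big)\Big]\le\exp\big(C^2\lambda^2 N R^2\big),\qquad |\lambda|\le\tfrac{1}{CR}.
\end{align*}
Markov's inequality applied to $\exp(\lambda\sum_i X_i)$ then yields, for every admissible $\lambda\ge 0$,
\begin{align*}
\bP\Big(\frac1N\sum_{i=1}^N X_i\ge t\Big)\le\exp\big(-\lambda N t+C^2\lambda^2 N R^2\big).
\end{align*}

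Finally I would optimize the exponent $-\lambda t+C^2\lambda^2 R^2$ over $\lambda\in[0,1/(CR)]$. Its unconstrained minimizer $\lambda^\star=t/(2C^2R^2)$ is admissible exactly when $t\le 2CR$, in which case the minimum equals $-t^2/(4C^2R^2)$, the sub-Gaussian regime. For $t>2CR$ the exponent is decreasing on the admissible interval, so I would take the endpoint $\lambda=1/(CR)$, giving exponent $-t/(CR)+1\le -t/(2CR)$, the sub-exponential regime. Combining the two cases gives $\exp(-cN\min\{t^2/R^2,\,t/R\})$ for an absolute $c>0$, which is the claimed form. The only real obstacle is quantitative: propagating the absolute constants through the moment estimate, the MGF bound, and the final optimization sharply enough to reach the specific value $c'=1/10$ rather than the cruder constant the above estimates yield; this careful non-asymptotic bookkeeping is precisely what is carried out in \cite[Corollary 2.8.3]{RV:18}.
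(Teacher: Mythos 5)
Your overall architecture (symmetrization to a one-sided tail, MGF bound from the $\psi_1$ moment estimates $\bE[|X|^k]\le 2k!K^k$, factorization by independence, Markov, constrained optimization over $\lambda$ with a sub-Gaussian and a sub-exponential regime) is exactly the route the paper takes. However, there is a genuine gap at precisely the point this proposition exists to settle: the explicit value $c'=1/10$. You defer the constant bookkeeping to \cite[Corollary 2.8.3]{RV:18}, but that reference states the result only with an unspecified absolute constant---the paper says so explicitly just below the proposition, and supplying the missing constant is the entire purpose of its independent proof. Worse, your own quantitative choices cannot reach $1/10$: fixing $C=2$ in the MGF bound $\bE[e^{\lambda X}]\le \exp(C^2\lambda^2 K^2)$ on $|\lambda|\le 1/(CK)$ gives, after optimizing, exponent $-t^2/(4C^2R^2)=-t^2/(16R^2)$ in the sub-Gaussian regime and $-t/(2CR)=-t/(4R)$ in the sub-exponential regime, i.e.\ $c'=\min\{1/16,1/4\}=1/16$. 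Since $\exp(-\tfrac{1}{16}x)\ge \exp(-\tfrac{1}{10}x)$ for $x\ge 0$, this is a strictly weaker inequality than the one claimed, so your argument as written does not prove the statement.

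The repair stays entirely within your framework: do not freeze the constant in the MGF estimate. From $\bE[e^{\lambda X}]\le 1+2K^2\lambda^2/(1-K\lambda)$, impose $1-\lambda K\ge 1/\fa$ for a free parameter $\fa>1$, which yields
\begin{align*}
\bE\big[e^{\lambda X}\big]\le \exp\big(2\fa K^2\lambda^2\big)
\quad\text{for}\quad |\lambda|\le \frac{\fa-1}{\fa K},
\end{align*}
carry $\fa$ through the Chernoff optimization (the paper uses the elementary inequality $\alpha\lambda^2-\beta\lambda\le -\tfrac{\beta}{2}\lambda$ on $[0,\beta/(2\alpha)]$ to handle the endpoint case cleanly), and obtain the decay rate $\min\{\tfrac{1}{8\fa},\tfrac{\fa-1}{2\fa}\}$. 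Maximizing over $\fa$ balances the two expressions at $\fa=5/4$, where both equal $1/10$. This last optimization over the free parameter is the one idea your proposal is missing; with it, your argument becomes a complete proof matching the paper's.
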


The precise constant $c'$ above is not specified in \cite{RV:18} but
we provide an independent proof of this result in 
Section~\ref{subsec:explicit:contants}.

\begin{proof}[Proof of
	Proposition~\ref{prop:concentration:around:mean}]
	Note that each random variable $X_i^p-1$ is mean zero by
	assumption.  Additionally, we have that $\|X_i^p-1\|_{\psi_1} \le
	\|X_i^p\|_{\psi_1}+\|1\|_{\psi_1}=\|X_i\|_{\psi_p}+1/\ln 2\le R$,
	where we took into account that
	\begin{align*}
		\bE[\psi_1(X_i^p/t^p)]=\bE[\psi_p(X_i/t)]\Rightarrow
		\|X_i^p\|_{\psi_1}=\|X_i\|_{\psi_p},
	\end{align*}
	and the following fact, shown after the proof.
	
	\noindent $\triangleright$ \textit{Fact I.} For any constant random
	variable $X=\mu\in\Rat{}$, it holds $\|X\|_{\psi_p}=|\mu|/(\ln
	2)^{\frac{1}{p}}$.  \quad $\triangleleft$
	
	Thus, we get from Proposition~\ref{prop:Bernstein:inequality} that 
	\begin{align}
		\bP\bigg(\bigg|\frac{1}{N}\sum_{i=1}^NX_i^p-1\bigg|\ge t\bigg) \le
		2\exp \Big(-\frac{c'N}{R^2}\min\{t^2,t\}\Big),
		\label{pth:power:mean:concentr} 
	\end{align} 
	where we used the fact that $R>1$. 
	We will further leverage the following facts shown after the proof of the 
	proposition.
	
	\noindent $\triangleright$ \textit{Fact II.} For all $p\ge 1$ and $z\ge 0$ 
	it 
	holds that 
	$
	|z-1|\ge\delta\Rightarrow|z^p-1|\ge \max\{\delta,\delta^p\}. \quad 
	\triangleleft
	$ 
	
	\noindent $\triangleright$ \textit{Fact III.} For any $\delta\ge0$, if 
	$u=\max\{\delta,\delta^p\}$, then  $\min\{u,u^2\}=\alpha_p(\delta)$, with 
	$\alpha_p$ as given by~\eqref{function:alphap}. \quad $\triangleleft$
	
	By exploiting Fact II, we get
	\begin{align*}
		&
		\bP\bigg(\bigg|\bigg(\frac{1}{N}\sum_{i=1}^NX_i^p\bigg)^{\frac{1}{p}}
		-1\bigg|\ge t\bigg) \le
		\bP\bigg(\bigg|\frac{1}{N}\sum_{i=1}^NX_i^p-1\bigg|\ge
		\max\{t,t^p\}\bigg)
		\\
		& \qquad \le 2\exp
		\Big(-\frac{c'N}{R^2}\min\{\max\{t,t^p\}^2,\max\{t,t^p\}\}\Big).
	\end{align*}
	Thus, since $\bP(|Y|\ge t)\ge \bP(Y\ge t)$ for any random variable $Y$, we 
	obtain~\eqref{norm:concentration} from Fact III and conclude the proof.
\end{proof}

\begin{proof}[Proof of Fact I]
	From the $\psi_p$ norm definition,
	$\|X\|_{\psi_p}=\inf\big\{t>0\,|\,\bE\big[e^{(|X|/t)^p}\big]\le
	2\big\} = \inf\big\{t>0\,|\,t\ge |\mu|/(\ln 2)^\frac{1}{p}\big\}
	=|\mu|/(\ln 2)^\frac{1}{p}$,
	which establishes the result.
\end{proof}

\begin{proof}[Proof of Fact II]
	Assume first that $z<1$. Then, we have that
	$|z^p-1|=1-z^p>1-z\ge\delta\ge\delta^p$. Next, let $z\ge 1$. Then,
	we get $|z^p-1|=z^p-1\ge z-1\ge \delta$. In addition, when
	$\delta^p\ge \delta$, namely, when $\delta\ge 1$, we have that
	$z^p-(z-1)^p\ge 1$, and hence, $|z^p-1|=z^p-1\ge(z-1)^p\ge\delta^p$.
\end{proof}

\begin{proof}[Proof of Fact III]
	We consider two cases. Case (i): $0\le \delta\le 1\Rightarrow
	\delta\ge\delta^p \Rightarrow u=\max\{\delta,\delta^p\}=\delta
	$. Then $ \min\{u,u^2\}=\min\{\delta,\delta^2\}=\delta^2$. Case
	(ii): $\delta>1\Rightarrow\delta\le\delta^p\Rightarrow
	u=\max\{\delta,\delta^p\}=\delta^p$. Then
	$\min\{u,u^2\}=\min\{\delta^p,\delta^{2p}\}=\delta^p$. Thus, 
	we get  that $\min\{u,u^2\}=\alpha_p(\delta)$ for all $\delta\ge 0$.
\end{proof}

\begin{proof}[Proof of Theorem~\ref{thm:ambiguity:over:horizon}]
	Most of the proof is a verbatim repetition of the arguments employed
	for the proofs of Theorem~\ref{thm:ambiguity:set:noise} and the
	results invoked therein. Here we only provide the relevant
	modifications. For the nominal part of the ambiguity radius it
	suffices to establish that the support of the distribution of
	$\bm\xi_{\bm\ell}$ (in the infinity norm) is in the ball
	$B_{\infty}^{\widetilde\ell d}(\rho_{\bm\xi_{\bm\ell}})$ with
	$\rho_{\bm\xi_{\bm\ell}}$ given by \eqref{rhoxiell:stacked:states},
	which follows from \eqref{pushed:forward:ditributiuon} and the fact
	that the stack state vector $\bm\xi_{\bm\ell}$ over the horizon
	satisfies
	\begin{align*}
		\|\bm\xi_{\bm\ell}\|_{\infty}
		\le\max_{\ell\in[\ell_1:\ell_2]}\|\xi_\ell\|_{\infty}.
	\end{align*}
	
	For the noise radius, we get in analogy to the proof of 
	Lemma~\ref{lemma:empirical:vs:true} that 
	\begin{align*}
		W_p(\widehat P_{\bm\xi_{\bm\ell}}^N,P_{\bm\xi_{\bm\ell}}^N)
		\le\Big(\frac{1}{N}\sum_{i=1}^N\|\bm e_{\bm\ell}^i\|^p\Big)^\frac{1}{p}
	\end{align*}
	with $\bm e_{\bm\ell}^i:=(e_{\ell_1}^i,\ldots,e_{\ell_2}^i)$. Then we get as 
	in Lemma~\ref{lemma:empirical:vs:true} that   
	\begin{align*}
		\|\bm e_{\bm\ell}^i\|\le \sum_{\ell=\ell_1}^{\ell_2} \|e_{\ell}^i\|\le 
		\widetilde\fM(z^i,\bm\omega^i)+\widetilde\fE(\bm v^i),
	\end{align*}
	with
	\begin{align*}
		\widetilde\fM(z^i,\bm\omega^i):= \sum_{\ell=\ell_1}^{\ell_2} 
		\fM(z^i,\bm\omega^i;\ell)\qquad \widetilde\fE(\bm 
		v^i)\equiv\widetilde\fE^i:= \sum_{\ell=\ell_1}^{\ell_2}\fE^i(\ell)
	\end{align*}
	and $\fM(z^i,\bm\omega^i;\ell)\equiv\fM(z^i,\bm\omega^i)$, 
	$\fE^i(\ell)\equiv\fE^i$ as given in the proof of that lemma. Note that in 
	exact analogy to the proofs of Lemmas~\ref{lemma:empirical:vs:true} 
	and~\ref{lemma:Orlicz:bounds}, the 
	constants $\widetilde\fM_w$, $\widetilde\fM_v$, and $\widetilde\fC_v$ in the 
	statement of the theorem constitute upper bounds for each 
	$\widetilde\fM(z^i,\bm\omega^i)$ over $B_\infty^d(\rho_{\xi_0})\times
	B_\infty^{\ell q}(\rho_w)$, $\|\widetilde\fE^i\|_p$, and 
	$\|\widetilde\fE^i\|_{\psi_p}$, respectively, whereas $\widetilde\fm_v$ is a 
	positive lower bound for each $\|\widetilde\fE^i\|_{\psi_p}$. The remainder 
	of the proof follows the exact same steps as the proofs of 
	Proposition~\ref{prop:hatepsN:guarantees} and 
	Theorem~\ref{thm:ambiguity:set:noise}. 
\end{proof}

\begin{proof}[Proof of Lemma~\ref{lemma:Wasserstein:convolution}]
	The proof of the first part is found inside the proof of
	\cite[Lemma~1]{GCP-AP:16}. To prove the second part, let $\pi$ be an
	optimal coupling of $P_1$ and $P_2$ for the Wasserstein distance
	$W_p(P_1,P_2)$ and define
	\begin{align*}
		\widetilde\pi(B):=\int_{\Rat{d}\times\Rat{d}}\int_{\Rat{d}}
		\bone_B(x,y+z)Q(dz)\pi(dx,dy)
	\end{align*}
	for any measurable subset $B$ of $\Rat{d}\times\Rat{d}$. Then its 
	marginals satisfy
	\begin{align*}
		\widetilde\pi(\Rat{d}\times 
		B_y) & =\int_{\Rat{d}\times\Rat{d}}\int_{\Rat{d}}
		\bone_{B_y}(y+z)Q(dz)\pi(dx,dy) \\
		& = \int_{\Rat{d}}\int_{\Rat{d}}
		\bone_{B_y}(y+z)Q(dz)P_2(dy)=P_2\star Q(B_y)
	\end{align*}
	and $\widetilde\pi(B_x\times\Rat{d})=P_1(B_x)$, which is derived by 
	analogous arguments. In addition, 
	\begin{align*}
		&
		\bigg(\int_{\Rat{d}\times\Rat{d}}\|x-y\|^p\widetilde\pi(dx,dy)\bigg)^{\frac{1}{p}}
		=\bigg(\int_{\Rat{d}\times\Rat{d}}\int_{\Rat{d}}
		\|x-(y+z)\|^pQ(dz)\pi(dx,dy)\bigg)^{\frac{1}{p}} \\
		& \quad =\bigg(\int_{\Rat{d}\times\Rat{d}}\int_{\Rat{d}}
		\|(x-y)+z\|^pQ(dz)\pi(dx,dy)\bigg)^{\frac{1}{p}} \\
		& \quad \le \bigg(\int_{\Rat{d}\times\Rat{d}}\int_{\Rat{d}}
		\|x-y\|^pQ(dz)\pi(dx,dy)\bigg)^{\frac{1}{p}}
		+\bigg(\int_{\Rat{d}\times\Rat{d}}\int_{\Rat{d}}
		\|z\|^pQ(dz)\pi(dx,dy)\bigg)^{\frac{1}{p}} \\
		& \quad \le W_p(P_1,P_2)+\bigg(\int_{\Rat{d}\times\Rat{d}}
		q^p\pi(dx,dy)\bigg)^{\frac{1}{p}}= W_p(P_1,P_2)+q,
	\end{align*}
	where we used the definition of $\widetilde\pi$ in the first
	equality, the triangle inequality for the space
	$L_p(\Rat{d}\times\Rat{2d};Q\otimes\pi)$ in the first inequality,
	and that $\big(\int_{\Rat{d}}\|x\|^pQ(dx)\big)^{\frac{1}{p}}\le q$ in 
	the last inequality. This concludes also the second part of the proof.
\end{proof}

\subsection{Explicit constants in the concentration inequalities} 
\label{subsec:explicit:contants}

We first give an independent proof of the norm concentration
inequality in Proposition~\ref{prop:Bernstein:inequality}. This proof
entails the explicit derivation of the constant $c'=1/10$ therein,
which is the same as that in
Proposition~\ref{prop:concentration:around:mean}.  We note that a
general concentration result with the same decay rates as
Proposition~\ref{prop:concentration:around:mean} can also be found in
\cite[Exercise 2.27, Page 51]{SB-GL-PM:13}, however, without the
explicit characterization of the involved constants. We exploit an
equivalent characterization of sub-exponential random variables stated
next.  This characterization can be found in~\cite[Proposition~2.7.1
and Exercise~2.7.2]{RV:18}, but here we give the exact constants
 and the necessary modifications of the corresponding proofs.

\begin{lemma}\longthmtitle{Properties of sub-exponential random
    variables}\label{lemma:subexponential:properties}
  Let $X$ be sub-exponential. Then:
  \begin{enumerate}
  \item The tails of $X$ satisfy
    \begin{align*}
      \bP(|X|\ge t)\le 2\exp\big(-t/\|X\|_{\psi_1}\big)\quad \text{for all}\;t\ge 0. 
    \end{align*}   
  \item The moments of $X$ satisfy
    \begin{align*}
      \|X\|_{L^p}^p=\bE\big[|X|^p\big]\le 2p!\|X\|_{\psi_1}^p\quad \text{for all 
        integers}\;p\ge 1. 
    \end{align*}  
  %
  \item If additionally $\bE[X]=0$, the moment generating function of
    $X$ satisfies
    \begin{align*}
      \bE\big[\exp(\lambda X)\big]\le
      \exp\big(2\fa\|X\|_{\psi_1}^2\lambda^2\big),\quad \text{for
        all}\;\fa>1\;\text{and}\;\lambda\;\text{with}\;|\lambda|\le
      \frac{\fa-1}{\fa\|X\|_{\psi_1}}.
    \end{align*}   
  \end{enumerate}
\end{lemma}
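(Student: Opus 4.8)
The plan is to treat the three parts in order with $K := \|X\|_{\psi_1}$ fixed throughout, each part feeding into the next. For part (i), I would first record that the infimum defining the Orlicz norm is attained, i.e.\ $\bE[e^{|X|/K}]\le 2$: the map $t\mapsto \bE[e^{|X|/t}]$ is nonincreasing, and letting $t\downarrow K$ the monotone convergence theorem gives $\bE[e^{|X|/K}]=\lim_{t\downarrow K}\bE[e^{|X|/t}]\le 2$, since $\bE[e^{|X|/t}]\le 2$ for every $t>K$. The tail estimate is then an immediate application of Markov's inequality to the nonnegative variable $e^{|X|/K}$: for $t\ge 0$, $\bP(|X|\ge t)=\bP(e^{|X|/K}\ge e^{t/K})\le e^{-t/K}\bE[e^{|X|/K}]\le 2e^{-t/K}$, which is the claim.

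For part (ii), I would integrate the tail bound from (i) via the layer-cake formula. Writing $\bE[|X|^p]=\int_0^\infty p\,t^{p-1}\bP(|X|\ge t)\,dt$ and inserting the bound yields $\bE[|X|^p]\le 2p\int_0^\infty t^{p-1}e^{-t/K}\,dt$. The only computation is the Gamma integral $\int_0^\infty t^{p-1}e^{-t/K}\,dt=K^p\Gamma(p)=K^p(p-1)!$, so that $\bE[|X|^p]\le 2p\,K^p(p-1)!=2\,p!\,K^p$, exactly as stated for integer $p\ge 1$.

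For part (iii), which is the crux, I would expand the moment generating function as a power series and control each term through the moment bounds of (ii). Setting $u:=|\lambda|K$, the hypothesis $|\lambda|\le(\fa-1)/(\fa K)$ gives $u\le 1-1/\fa<1$; in particular $|\lambda|<1/K$, so $\bE[e^{|\lambda X|}]\le\bE[e^{|X|/K}]\le 2<\infty$, and this finiteness justifies interchanging expectation and summation. Using $\bE[X]=0$ to annihilate the linear term and $|\bE[X^p]|\le\bE[|X|^p]\le 2\,p!\,K^p$ for $p\ge 2$, I obtain $\bE[e^{\lambda X}]=1+\sum_{p=2}^\infty \lambda^p\bE[X^p]/p!\le 1+2\sum_{p=2}^\infty u^p=1+2u^2/(1-u)$. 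The constraint gives $1-u\ge 1/\fa$, hence $2u^2/(1-u)\le 2\fa u^2$, and the elementary inequality $1+x\le e^x$ delivers $\bE[e^{\lambda X}]\le 1+2\fa u^2\le e^{2\fa u^2}=\exp(2\fa K^2\lambda^2)$, as desired.

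The main obstacle I expect is the bookkeeping of constants in part (iii): the termwise passage of expectation through the infinite series must be justified by the absolute convergence that holds precisely because $u<1$, and the geometric sum $u^2/(1-u)$ has to be bounded by $\fa u^2$ using the exact constraint $1-u\ge 1/\fa$. By comparison, the attainment of the Orlicz-norm infimum in part (i) and the Gamma-integral evaluation in part (ii) are minor but necessary technical points.
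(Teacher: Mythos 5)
Your proof is correct and follows essentially the same route as the paper's: Markov's inequality applied to $\exp(|X|/\|X\|_{\psi_1})$ for (i), the layer-cake formula plus the Gamma integral for (ii), and the power-series expansion of the moment generating function combined with the geometric-sum bound $2u^2/(1-u)\le 2\fa u^2$ for (iii). Your additional care in justifying the attainment of the Orlicz-norm infimum via monotone convergence and the termwise interchange of expectation and summation only tightens details the paper leaves implicit.
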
  
\begin{proof}
  To show \emph{(i)}, we use Markov's inequality and the definition of
  the $\psi_1$ norm. In particular, we have
  \begin{align*}
    \bP(|X|\ge t) & =\bP\big(\exp\big(|X|/\|X\|_{\psi_1}\big)\ge
    \exp\big(t/\|X\|_{\psi_1}\big)\big)
    \\
    & \le
    \bE\big[\exp\big(|X|/\|X\|_{\psi_1}\big)\big]\exp\big(-t/\|X\|_{\psi_1}\big)
    =2\exp\big(-t/\|X\|_{\psi_1}\big).
  \end{align*}
  To show \emph{(ii)}, note that
  \begin{align*}
    \bE\big[|X|^p\big] & =\int_0^\infty\bP(|X|^p\ge
    u)du=\int_0^\infty\bP(|X|\ge
    t)pt^{p-1}dt \\
    & \overset{{\rm (i)}}{\le}
    2\int_0^\infty\exp\big(-t/\|X\|_{\psi_1}\big)pt^{p-1}dt  \\
    & \overset{s=t/\|X\|_{\psi_1}}{=}2\|X\|_{\psi_1}^pp\int_0^\infty
    e^{-s}s^{p-1}ds=2\|X\|_{\psi_1}^p\Gamma(p+1).
  \end{align*} 
  Hence, since $\Gamma(p+1)=p!$ for integer values of $p$, we get 
  \begin{align*}
    \bE\big[|X|^p\big]\le 2p!\|X\|_{\psi_1}^p.
  \end{align*}
  %
  %
  
  To show \emph{(iii)}, note first that since $\bE[X]=0$,
  \begin{align*}
    \bE\big[\exp(\lambda X)\big]=\bE\Big[1+\lambda
    X+\sum_{p=2}^{\infty}\frac{(\lambda
      X)^p}{p!}\Big]=1+\sum_{p=2}^{\infty}\frac{\lambda^p\bE\big[X^p]}{p!}\le
    1+\sum_{p=2}^{\infty}\frac{\lambda^p\bE\big[|X|^p]}{p!}.
  \end{align*}
  Thus, we get from \emph{(ii)} that  
  \begin{align*}
    \bE\big[\exp(\lambda X)\big]\le 
    1+2\sum_{p=2}^{\infty}(\|X\|_{\psi_1}\lambda)^p 
    =1+2\frac{\|X\|_{\psi_1}^2\lambda^2}{1-\|X\|_{\psi_1}\lambda},
  \end{align*}
  for all $|\lambda|<1/\|X\|_{\psi_1}$. Further, when 
  $|\lambda|\le \frac{\fa-1}{\fa\|X\|_{\psi_1}}\iff 
  1-\lambda\|X\|_{\psi_1}\ge1/\fa$, we 
  have 
  \begin{align*}
    1+2\frac{\|X\|_{\psi_1}^2\lambda^2}{1-\|X\|_{\psi_1}\lambda}
    \le 1+2\fa\|X\|_{\psi_1}^2\lambda^2\le \exp(2\fa\|X\|_{\psi_1}^2\lambda^2).
  \end{align*} 
\end{proof}

We next use this result to specify the constant $c'$ in
Proposition~\ref{prop:Bernstein:inequality}, giving its proof along
the lines of Theorem~2.8.1--Corollary~2.8.3 in~\cite{RV:18}.

\begin{proof}[Proof of Proposition~\ref{prop:Bernstein:inequality}]
  We denote $S=\frac{1}{N}\sum_{i=1}^NX_i$ and consider the real
  parameter $\lambda$. Using independence of the $X_i$'s we get from
  Markov's inequality
  \begin{align*}
    \bP(S\ge t) & =\bP(\exp(\lambda S)\ge \exp(\lambda t))
    \\
    & \le \exp(-\lambda t)\bE[\exp(\lambda S)]=\exp(-\lambda
    t)\prod_{i=1}^N\bE\Big[\exp\Big(\lambda\frac{1}{N}X_i\Big)\Big].
  \end{align*}
  Hence, from Lemma~\ref{lemma:subexponential:properties}(ii) applied
  to the random variables $\frac{1}{N}X_i$, for any $\fa>1$ and
  \begin{align} \label{lambda:constraint}
    |\lambda|\le\frac{\fa-1}{\fa\frac{1}{N}\max_{i\in[1:N]}\|X_i\|_{\psi_1}} 
  \end{align}
  it holds $ \bE\Big[\exp\Big(\lambda\frac{1}{N}X_i\Big)\Big]\le\exp
  \Big(\fa\frac{\lambda^2}{N^2}\|X_i\|_{\psi_1}^2\Big) $, for each
  $i\in[1:N]$. Consequently,
  \begin{align*}
    \bP(S\ge t)\le \exp\Big(-\lambda 
    t+2\fa\frac{\lambda^2}{N^2}\|X_i\|_{\psi_1}^2\Big). 
  \end{align*}
  Minimizing with respect to $\lambda$ under the constraint 
  \eqref{lambda:constraint} we get the optimizer
  \begin{align*}
    \lambda_*=\min\bigg\{\frac{tN^2}{4\fa\sum_{i=1}^N\|X_i\|_{\psi_1}^2}, 
    \frac{(\fa-1)N}{\fa\max_{i\in[1:N]}\|X_i\|_{\psi_1}}\bigg\}. 
  \end{align*}   
  Combining this with the elementary inequality
  $\alpha\widehat\lambda^2 - \beta
  \widehat\lambda\le-\frac{\beta}{2}\widehat\lambda$, which holds for
  any $\alpha,\beta>0$ and
  $\widehat\lambda\in[0,\frac{\beta}{2\alpha}]$, we have
  \begin{align*}
    & \bP(S\ge t) \le
    \exp\bigg(-\min\bigg\{\frac{t^2N^2}{8\fa\sum_{i=1}^N\|X_i\|_{\psi_1}^2},
    \frac{t(\fa-1)N}{2\fa\max_{i\in[1:N]}\|X_i\|_{\psi_1}}\bigg\}\bigg) \\
    & \le \exp\bigg(-\min\bigg\{\frac{t^2}
    {8\fa\big(\max_{i\in[1:N]}\|X_i\|_{\psi_1}\big)^2},
    \frac{t(\fa-1)}{2\fa\max_{i\in[1:N]}\|X_i\|_{\psi_1}}\bigg\}N\bigg) \\
    & \le
    \exp\bigg(-\min\bigg\{\frac{1}{8\fa},\frac{\fa-1}{2\fa}\bigg\}\min\bigg\{\frac{t^2}
    {\big(\max_{i\in[1:N]}\|X_i\|_{\psi_1}\big)^2},
    \frac{t}{\max_{i\in[1:N]}\|X_i\|_{\psi_1}}\bigg\}N\bigg)
  \end{align*}
  for all $\fa>1$. Taking into account that 
  $\min\big\{\frac{1}{8\fa},\frac{\fa-1}{2\fa}\big\}$ is $\frac{1}{8\fa}$ for 
  $1<\fa<5/4$ and $\frac{\fa-1}{2\fa}$ for $\fa\ge 5/4$, we can select 
  $\fa=5/4$, which maximizes this term and obtain the optimal decay rate   
  \begin{align*}
    \bP(S\ge t)\le \exp\bigg(-\frac{1}{10}\min\bigg\{\frac{t^2}
    {\big(\max_{i\in[1:N]}\|X_i\|_{\psi_1}\big)^2},
    \frac{t}{\max_{i\in[1:N]}\|X_i\|_{\psi_1}}\bigg\}N\bigg).
  \end{align*}
  Repeating the above arguments for the random variables
  $-\frac{1}{N}X_i$, we derive the same bound for $\bP(-S\ge t)$ and
  establish the result with $c'=1/10$.
\end{proof}

We next provide explicit constants $C$ and $c$ for the nominal
ambiguity radius $\eps_N$ given by~\eqref{epsN:dfn} when $p
<d/2$. Note that any other case can also be reduced to this at the
cost of increased conservativeness by embedding the distribution in a
higher-dimensional space.  Further, the most typical values of $p$ are
$p=1$, where general DRO problems admit the tractable reformulations
provided in~\cite{PME-DK:17}, and $p=2$, where the dual optimization
problem admits certain convenient quadratic terms, which for instance
facilitate taking gradients~\cite{AC-JC:20-tac}. Thus, one can use the
precise concentration results for reasonably low-dimensional data.  To
obtain the desired constants, we exploit results
from~\cite{EB-TLG:14,SD-MS-RS:13}. In particular,
from~\cite[Proposition A.2]{EB-TLG:14}, we have the following
concentration inequality which quantifies how the Wasserstein distance
between the true and the empirical distribution concentrates around
its expected value.

\begin{proposition}\longthmtitle{Concentration around empirical
    Wasserstein mean}\label{prop:concentration:result}
  Assume that the probability measure $\mu$ is supported on the
  compact subset $B$ of $\Rat{d}$ (with the Euclidean norm). Then,
  \begin{align} \label{concentration:ineq} \bP(W_p(\mu^N,\mu)\ge
    \bE[W_p(\mu^N,\mu)]+t)\le
    e^{-Nt^{2p}/(2\widetilde\rho^{2p})} \qquad \forall t\ge 0,
  \end{align}
  where $\widetilde\rho=\diameter{2}{B}$.
\end{proposition}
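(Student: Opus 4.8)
The plan is to view $W_p(\mu^N,\mu)$ as a function of the $N$ i.i.d.\ samples and prove concentration by a bounded-differences argument adapted to the $\ell_p$ geometry. Write $X_1,\dots,X_N$ for the i.i.d.\ draws from $\mu$ (all lying in the compact set $B$) and set
\[
  f(x_1,\dots,x_N):=W_p\bigg(\frac1N\sum_{i=1}^N\delta_{x_i},\,\mu\bigg),
\]
so that $W_p(\mu^N,\mu)=f(X_1,\dots,X_N)$. First I would record the structural estimate that $f$ is Lipschitz in the $\ell_p$-product metric. Coupling the two empirical measures $\frac1N\sum_i\delta_{x_i}$ and $\frac1N\sum_i\delta_{y_i}$ diagonally (transport the $i$-th atom of the first to the $i$-th atom of the second) gives $W_p^p\big(\tfrac1N\sum_i\delta_{x_i},\tfrac1N\sum_i\delta_{y_i}\big)\le\tfrac1N\sum_i\|x_i-y_i\|^p$, so the triangle inequality for $W_p$ yields
\[
  |f(x)-f(y)|\le W_p\bigg(\frac1N\sum_i\delta_{x_i},\frac1N\sum_i\delta_{y_i}\bigg)\le\bigg(\frac1N\sum_{i=1}^N\|x_i-y_i\|^p\bigg)^{1/p}.
\]
In particular, changing a single coordinate (both values in $B$, hence at Euclidean distance $\le\widetilde\rho=\diameter{2}{B}$) changes $f$ by at most $\widetilde\rho/N^{1/p}$.

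The second and decisive step is to turn this $\ell_p$-Lipschitz bound into the tail estimate~\eqref{concentration:ineq}. The naive route---feeding the single-coordinate bound $\widetilde\rho/N^{1/p}$ into McDiarmid's (or Azuma's) inequality---only yields a sub-Gaussian tail of the form $\exp(-c\,t^2 N^{2/p-1}/\widetilde\rho^2)$, which for $p\neq1$ has both the wrong power of $t$ and the wrong power of $N$. The reason is that processing the coordinates one at a time discards the joint structure: it bounds the effect of altering $k$ coordinates by $k\,\widetilde\rho/N^{1/p}$ rather than by the sharper $k^{1/p}\widetilde\rho/N^{1/p}$ that the $\ell_p$-Lipschitz estimate above provides. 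To recover the claimed rate one must exploit the full Lipschitz bound jointly across all coordinates. Concretely, I would invoke the concentration inequality for $\ell_p$-Lipschitz functions on bounded product spaces that is exactly the content of~\cite[Proposition~A.2]{EB-TLG:14}: applied to $N^{1/p}f$, which is $1$-Lipschitz for the metric $d_p(x,y)=(\sum_i\|x_i-y_i\|^p)^{1/p}$ on a product of diameter-$\widetilde\rho$ sets, it produces precisely the stated constant $1/(2\widetilde\rho^{2p})$ and factor~$N$.

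The hard part is this last step, i.e.\ establishing the $t^{2p}$ (rather than $t^2$) tail with the full factor $N$. The tempting shortcut of applying a bounded-differences inequality to $g:=f^p$ and converting does not close cleanly: although $(\bE f+t)^p\ge(\bE f)^p+t^p$ by superadditivity of $s\mapsto s^p$, Jensen's inequality gives $(\bE f)^p\le\bE[f^p]$ in the wrong direction, so concentration of $g$ around $\bE[g]$ cannot be transferred to concentration of $f$ around $\bE[f]$; moreover the coordinatewise difference bound on $g$ again loses the needed power of $N$. A self-contained proof therefore proceeds through a direct exponential-moment estimate on the Doob martingale of $f$ that controls the increments in the aggregate $\ell_p$ sense, or equivalently through a transportation-cost inequality for the product measure with respect to $d_p$. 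Carrying out either route with the explicit constant is the delicate computation that~\cite{EB-TLG:14} performs, and it is what I would reproduce here to complete the argument.
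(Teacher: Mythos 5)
The paper offers no proof of this proposition: it is imported verbatim as a citation of \cite[Proposition A.2]{EB-TLG:14}, which is exactly the result you invoke for the decisive step. Your proposal therefore takes essentially the same route, and the extra scaffolding you supply—the diagonal-coupling bound showing $f$ is $1$-Lipschitz for $d_p$ after rescaling by $N^{1/p}$, and the observation that coordinatewise bounded differences would only give the rate $\exp(-ct^2N^{2/p-1}/\widetilde\rho^2)$—is correct and accurately locates where the cited reference does the real work.
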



We will also use~\cite[Proposition~1 and Remark~4]{SD-MS-RS:13}, which
give the following bound for the expected Wasserstein distance between
the empirical and actual distribution.

\begin{proposition}\longthmtitle{Decay of empirical Wasserstein
    mean}\label{prop:W:concentration:around:mean} 
  For any probability measure $\mu$ supported on $[0,1)^d$ and
  $p<d/2$, $ \bE\big[W_p(\mu^N,\mu)\big] \le C_\star N^{-1/d}$,
  with
  \begin{align}
    \label{W:concentration:around:mean}
    C_\star:=\sqrt{d}2^{(d-2)/(2p)}\left(\frac{1}{1-2^{p-d/2}}
      +\frac{1}{1-2^{-p}}\right)^{1/p}.
    \end{align}
\end{proposition}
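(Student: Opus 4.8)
The plan is to reconstruct the classical multiscale (dyadic) transport argument behind \cite[Proposition~1]{SD-MS-RS:13}. For each $n\ge 0$ let $\mathcal{P}_n$ be the partition of $[0,1)^d$ into the $2^{nd}$ half-open dyadic cubes of side $2^{-n}$, each of Euclidean diameter $\sqrt{d}\,2^{-n}$. The first step is a \emph{deterministic} multiscale transport inequality: for any probability measures $P,Q$ on $[0,1)^d$ and any truncation level $L\in\bN$,
\[
  W_p(P,Q)^p \le d^{p/2}\Big(\sum_{n=1}^{L} 2^{-(n-1)p}\,\Delta_n(P,Q) + 2^{-Lp}\Big),
\]
where $\Delta_n(P,Q):=\tfrac{1}{2}\sum_{A\in\mathcal{P}_n}|P(A)-Q(A)|$ is the total-variation discrepancy at scale $n$. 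I would prove this by building a coupling of $P$ and $Q$ hierarchically down the dyadic tree: within each level-$(n-1)$ cube the children masses of $P$ are rematched to those of $Q$, every transported unit of mass then travels at most the parent diameter $\sqrt{d}\,2^{-(n-1)}$ and the rematched mass at that scale is controlled by $\Delta_n$, while below level $L$ all residual mass is collapsed inside a level-$L$ cube, contributing at most the $(\sqrt d\,2^{-L})^p=d^{p/2}2^{-Lp}$ term.

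The second step specializes $P=\mu$, $Q=\mu^N$ and takes expectations. Since $N\mu^N(A)$ is $\mathrm{Binomial}(N,\mu(A))$, the Cauchy--Schwarz bound $\bE|\mu^N(A)-\mu(A)|\le(\bE[(\mu^N(A)-\mu(A))^2])^{1/2}=(\mu(A)(1-\mu(A))/N)^{1/2}\le(\mu(A)/N)^{1/2}$ combined with a second Cauchy--Schwarz over the $2^{nd}$ cubes gives
\[
  \bE\,\Delta_n(\mu,\mu^N)\le \tfrac12 N^{-1/2}\sum_{A\in\mathcal{P}_n}\mu(A)^{1/2}\le \tfrac12 N^{-1/2}\,2^{nd/2}.
\]
Inserting this into the (monotone, deterministic) bound of the first step yields $\bE[W_p(\mu^N,\mu)^p]\le d^{p/2}\big(\tfrac12 N^{-1/2}\sum_{n=1}^{L}2^{-(n-1)p}2^{nd/2}+2^{-Lp}\big)$.

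The third step balances the two regimes. The refinement sum $\sum_{n=1}^{L}2^{-(n-1)p}2^{nd/2}$ is geometric with ratio $2^{d/2-p}>1$ (using $p<d/2$), so it is dominated by its top term; choosing $L\in\bN$ with $2^{L}\asymp N^{1/d}$ makes this contribution scale like $N^{-p/d}$ with a $1/(1-2^{p-d/2})$ factor, while the truncation term $2^{-Lp}\asymp N^{-p/d}$ supplies the $1/(1-2^{-p})$ factor after re-indexing the tail $\sum_{n>L}2^{-np}$. Summing, taking the $p$-th root, and using $\bE[W_p(\mu^N,\mu)]\le(\bE[W_p(\mu^N,\mu)^p])^{1/p}$ (Jensen, concavity of $x\mapsto x^{1/p}$) delivers $\bE[W_p(\mu^N,\mu)]\le C_\star N^{-1/d}$ with $C_\star$ as in \eqref{W:concentration:around:mean}.

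The main obstacle is the first step together with the exact constant. One must make the hierarchical coupling rigorous, verifying that the mass rematched at scale $n$ is genuinely governed by $\Delta_n$ and that the per-level costs accumulate additively across the tree. More delicately, one must track the dyadic bookkeeping — the shift $n-1$ versus $n$ in the diameters, and the precise placement of the critical level $L$ — so that the two geometric series combine to reproduce the \emph{sharp} prefactor $\sqrt{d}\,2^{(d-2)/(2p)}\big(\tfrac{1}{1-2^{p-d/2}}+\tfrac{1}{1-2^{-p}}\big)^{1/p}$ rather than merely the correct rate $N^{-1/d}$; aligning this accounting with \cite[Remark~4]{SD-MS-RS:13} is where the careful computation is concentrated.
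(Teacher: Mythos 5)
The paper does not prove this proposition at all: it is imported verbatim from \cite[Proposition~1 and Remark~4]{SD-MS-RS:13}, so there is no internal proof to compare against. Your reconstruction is exactly the dyadic multiscale argument of that reference, and the plan is sound: the deterministic inequality in your first step is correct, since in the hierarchical coupling the mass newly rematched at level $n$ is $\Delta_n-\Delta_{n-1}\le\Delta_n$ (with $\Delta_0=0$ and $\Delta_n$ nondecreasing under refinement) and travels at most the parent diameter $\sqrt{d}\,2^{-(n-1)}$, while the residual matched mass $1-\Delta_L$ is collapsed within level-$L$ cubes. Regarding the constant you flag as the delicate point: it does come out, provided you take $L=\lceil \tfrac{1}{d}\log_2 N\rceil$ (so $N^{1/d}\le 2^L< 2N^{1/d}$) and bound the dominant term of the refinement sum as $2^{p-1}N^{-1/2}\,2^{L(d/2-p)}/(1-2^{p-d/2})\le 2^{d/2-1}N^{-p/d}/(1-2^{p-d/2})$ in a single step; if you instead bound $N^{-1/2}2^{Ld/2}\le 2^{d/2}$ and $2^{-Lp}\le N^{-p/d}$ separately, you double-count the rounding slack and lose a factor $2^p$ inside the bracket, landing on a constant roughly twice $C_\star$. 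With that bookkeeping, your truncation term $2^{-Lp}\le N^{-p/d}\le 2^{d/2-1}N^{-p/d}/(1-2^{-p})$ is absorbed into the second summand of \eqref{W:concentration:around:mean}, and Jensen gives the stated bound.
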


Combining Propositions~\ref{prop:concentration:around:mean} 
and~\ref{prop:W:concentration:around:mean} we get the following explicit 
characterization of the nominal ambiguity radius.  

\begin{proposition}\longthmtitle{Explicit concentration inequality
    constants}\label{prop:explicit:ambiguity:radius} 
  Assume that the probability measure $\mu$ is supported on
  $B\subset\Rat{d}$ with
  $\rho:=\frac{1}{2}\diameter{\infty}{B}<\infty$ and that
  $p<d/2$. Then, we can select the nominal ambiguity radius
  \begin{align*}
    \eps_N(\beta,\rho):=2\rho \big(C_\star N^{-\frac{1}{d}}
    +\sqrt{d}(2\ln\beta^{-1})^{\frac{1}{2p}}N^{-\frac{1}{2p}}\big).
  \end{align*}
\end{proposition}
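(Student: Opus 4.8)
The plan is to reduce the statement to a distribution supported on the unit cube, where the two preceding propositions apply directly, and then undo the normalization by rescaling the Wasserstein distance.

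First I would exploit the hypothesis $\rho=\frac{1}{2}\diameter{\infty}{B}$: this means each coordinate projection of $B$ has length at most $2\rho$, so $B$ is contained in a closed cube of side $2\rho$. Applying the affine map $x\mapsto(x-a)/(2\rho)$, with $a$ the vector of coordinatewise infima of $B$, I would push $\mu$ forward to a measure $\nu$ supported on the unit cube (enlarging it by an arbitrarily small factor if boundary mass forces the half-open issue, which leaves the asymptotic constants unchanged). Since $W_p$ is translation invariant and scales linearly under dilations, transforming the $N$ i.i.d.\ samples accordingly gives the identity $W_p(\mu^N,\mu)=2\rho\,W_p(\nu^N,\nu)$, where $\nu^N$ is the empirical measure of the transformed samples.

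Next I would invoke Proposition~\ref{prop:W:concentration:around:mean} for $\nu$ on $[0,1)^d$ with $p<d/2$ to bound the expected distance, $\bE[W_p(\nu^N,\nu)]\le C_\star N^{-1/d}$, and Proposition~\ref{prop:concentration:result} to control the fluctuation around this mean. Because the Euclidean diameter of the unit cube is $\diameter{2}{[0,1)^d}=\sqrt{d}$, the latter reads
\begin{align*}
\bP\big(W_p(\nu^N,\nu)\ge \bE[W_p(\nu^N,\nu)]+t\big)\le e^{-Nt^{2p}/(2d^p)},\qquad t\ge 0 .
\end{align*}
I would then replace the mean by its upper bound $C_\star N^{-1/d}$ (this only shrinks the tail event), equate the right-hand side to the confidence parameter $\beta$, and solve $Nt^{2p}/(2d^p)=\ln\beta^{-1}$ to obtain $t=\sqrt{d}\,(2\ln\beta^{-1})^{1/(2p)}N^{-1/(2p)}$. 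Consequently, with probability at least $1-\beta$,
\begin{align*}
W_p(\nu^N,\nu)< C_\star N^{-\frac{1}{d}}+\sqrt{d}\,(2\ln\beta^{-1})^{\frac{1}{2p}}N^{-\frac{1}{2p}} ,
\end{align*}
and multiplying by $2\rho$ through the scaling identity recovers precisely $\eps_N(\beta,\rho)$.

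The routine calculations (inverting the exponential tail and simplifying $d^{p/(2p)}=\sqrt{d}$) are immediate; the one point demanding care will be the normalization step. I must confirm that the support genuinely fits inside a cube of side $2\rho$ from the definition of the $\infty$-diameter, that the half-open versus closed cube does not alter the constants $C_\star$ and $\sqrt{d}$, and that the Wasserstein scaling identity for the empirical measures is legitimate because the affine map transports an optimal coupling of $(\mu^N,\mu)$ into one of $(\nu^N,\nu)$ with transport cost scaled by exactly $(2\rho)^p$.
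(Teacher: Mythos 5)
Your proposal is correct and follows essentially the same route as the paper: both arguments combine the dilation/translation invariance of $W_p$ with Proposition~\ref{prop:W:concentration:around:mean} to bound $\bE[W_p(\mu^N,\mu)]$ by $2\rho C_\star N^{-1/d}$ and with Proposition~\ref{prop:concentration:result} (using $\diameter{2}{}\le\sqrt{d}\,\diameter{\infty}{}$) to control the fluctuation, then solve the exponential tail for $t$. The only cosmetic difference is that you normalize to the unit cube first and rescale at the end, whereas the paper applies the concentration inequality directly to $\mu$ with $\widetilde\rho=2\sqrt{d}\rho$; the computations are identical.
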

\begin{proof}
  Since the Wasserstein distance of the dilation of two distributions
  in a vector space by a factor is equal to this factor times their
  original Wasserstein distance (as exploited e.g., in
  \cite[Proposition 3.2]{DB-JC-SM:21-tac}), we have from
  Proposition~\ref{prop:W:concentration:around:mean} that
  $\bE(W_p(\mu^N,\mu)) \le 2\rho C_\star N^{-1/d}$. Substituting
  the latter in \eqref{concentration:ineq} and taking into account
  that $\diameter{2}{B}\le\sqrt{d}\diameter{\infty}{B}$, i.e., that
  $\widetilde\rho=\sqrt{d} 2\rho$, we get
  \begin{align*} 
    \bP(W_p(\mu^N,\mu)\ge 2\rho C_\star N^{-\frac{1}{d}}+t)\le 
    e^{-Nt^{2p}/(2d^p{(2\rho)}^{2p})} \qquad \forall t\ge 0.
  \end{align*}
  Set $\eps:={2\rho}C_\star N^{-\frac{1}{d}}+t$ and
  $\beta:=e^{-Nt^{2p}/(2d^p{(2\rho)}^{2p})}
  \iff
  t=\sqrt{d}{2\rho}(2\ln\beta^{-1})^{\frac{1}{2p}}N^{-\frac{1}{2p}}$. Then
  $
  \bP(W_p(\mu^N,\mu)\le\eps)
  \ge 1-\beta $ for all $\beta\in(0,1)$ and
  \begin{align*}
    \eps\equiv\eps_N(\beta,\rho)={2\rho}\big(C_\star N^{-\frac{1}{d}} 
    +\sqrt{d}(2\ln\beta^{-1})^{\frac{1}{2p}}N^{-\frac{1}{2p}}\big). 
  \end{align*}
\end{proof}

We also give an explicit ambiguity radius expression in terms of a
single exponential inequality as in \eqref{epsN:dfn} in the following
result.

\begin{corollary}\longthmtitle{Alternative explicit constants} 
  Under the assumptions of
  Proposition~\ref{prop:explicit:ambiguity:radius} we can select the
  nominal ambiguity radius
  \begin{align*}
    \eps_N(\beta,\rho):={2\rho}\bigg(\frac{\ln\big(C^\star\beta^{-1}\big)} 
    {c^\star}\bigg)^{\frac{1}{d}}N^{-\frac{1}{d}},
  \end{align*}
  with $C^\star:=\frac{C_\star^d}{2\sqrt{d}^d}$ and 
  $c^\star:=\frac{1}{2^d\sqrt{d}^d}$
\end{corollary}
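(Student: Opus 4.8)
The plan is to reduce the corollary to a single \emph{tail} inequality of the form
\begin{align*}
  \bP\big(W_p(\mu^N,\mu)\ge\eps\big)\le C^\star\exp\!\Big(-c^\star N\big(\tfrac{\eps}{2\rho}\big)^{d}\Big),\qquad \eps\ge 0,
\end{align*}
from which the stated radius follows immediately: setting the right-hand side equal to $\beta$ and solving for $\eps$ gives $c^\star N(\eps/2\rho)^{d}=\ln(C^\star\beta^{-1})$, hence $\eps=2\rho(\ln(C^\star\beta^{-1})/c^\star)^{1/d}N^{-1/d}$, which is exactly the claimed $\eps_N(\beta,\rho)$. Thus the entire content of the corollary is this exponential bound with the explicit constants $C^\star=C_\star^{d}/(2d^{d/2})$ and $c^\star=1/(2^{d}d^{d/2})$.

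To prove the tail bound I would start from the intermediate estimate already derived inside the proof of Proposition~\ref{prop:explicit:ambiguity:radius}, namely $\bP(W_p(\mu^N,\mu)\ge 2\rho C_\star N^{-1/d}+t)\le \exp(-Nt^{2p}/(2d^{p}(2\rho)^{2p}))$ for $t\ge0$, and split the range of $\eps$ into three regimes governed by $u:=\eps/(2\rho)$. First, since both $\mu^N$ and $\mu$ are supported on $B$ with $\diameter{2}{B}=\widetilde\rho=2\sqrt d\,\rho$, every coupling has cost at most $\widetilde\rho^{p}$, so $W_p\le\widetilde\rho$ almost surely and the probability is $0$ whenever $u>\sqrt d$. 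Second, for $u$ so small that $c^\star N u^{d}\le\ln C^\star$ (equivalently, $\eps$ below the value of $\eps_N(\beta,\rho)$ at $\beta=1$), the right-hand side of the target bound is $\ge1$ and the inequality is vacuous. Third, on the remaining band I would set $t=\eps-2\rho C_\star N^{-1/d}$ (which is positive there, provided $\ln C^\star\ge c^\star C_\star^{d}$), apply the concentration estimate, and convert the $t^{2p}$ decay into an $\eps^{d}$ decay using the elementary inequality $u^{d}\le d^{(d-2p)/2}u^{2p}$ valid for $0\le u\le\sqrt d$ (here $d>2p$ is essential). Tracking the factors $d^{(d-2p)/2}$, $2^{d}$ and the shift $C_\star N^{-1/d}$ is what produces precisely $c^\star$ and $C^\star$.

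The main obstacle is exactly this last conversion: the around-the-mean concentration decays like $t^{2p}$ whereas the target single-exponential form decays like $\eps^{d}$, and the two are reconciled only because $p<d/2$ and because the radius lives in the bounded regime $\eps\le\widetilde\rho$. The delicate bookkeeping is (a) checking the compatibility condition $\ln C^\star\ge c^\star C_\star^{d}$, which rearranges to $\ln C^\star\ge C^\star/2^{d-1}$ and holds whenever $d\ge 3$ (guaranteed here, since $p\ge1$ together with $p<d/2$ forces $d\ge3$), so that the mean shift lies inside the trivial regime and $t\ge0$ on the active band; and (b) verifying the intermediate-band inequality at its two endpoints—where it collapses to $e^{(\text{nonpositive})}\le1$ at the lower end and to the almost-sure bound $W_p\le\widetilde\rho$ at the upper end—and interpolating in between. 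Once the tail bound is established, the inversion described in the first paragraph completes the proof.
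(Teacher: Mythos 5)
Your reduction of the corollary to the tail bound $\bP(W_p(\mu^N,\mu)\ge\eps)\le C^\star\exp\big(-c^\star N(\eps/(2\rho))^d\big)$ and the inversion back to $\eps_N(\beta,\rho)$ are fine, but the proof of the tail bound breaks at your compatibility condition (a). The inequality $\ln C^\star\ge c^\star C_\star^d$, equivalently $\ln C^\star\ge C^\star/2^{d-1}$, does \emph{not} hold whenever $d\ge 3$: with the corollary's constants one gets, already for $p=1$, $d=3$, that $C_\star\approx 13.3$ and $C^\star=C_\star^d/(2\sqrt{d}^d)\approx 224$, so $C^\star/2^{d-1}\approx 56$ while $\ln C^\star\approx 5.4$; and since $C^\star$ grows roughly like $2^{d^2/(2p)}$, the condition also fails for all larger $d$. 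Hence the band $u\in\big[(\ln C^\star/(c^\star N))^{1/d},\,C_\star N^{-1/d}\big)$, on which your target bound is non-vacuous but $t=\eps-2\rho C_\star N^{-1/d}<0$, is nonempty, and it corresponds to all $\beta\in(C^\star e^{-c^\star C_\star^d},1)$ --- for $p=1$, $d=3$ that is every $\beta$ larger than about $10^{-22}$, i.e., essentially the entire range of the statement. (The ``check the endpoints and interpolate'' step in (b) is also not a proof as written, but the failure of (a) is the decisive gap.)

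The paper's own proof avoids this issue by never isolating a pure tail bound: it carries the mean shift as an explicit additive term, obtaining the valid radius $2\rho\big(C_\star+\sqrt{d}(2\ln\beta^{-1})^{1/d}\big)N^{-1/d}$ for each $\beta$, and only then \emph{enlarges} it to the single-$d$-th-root form via $a^{1/d}+b^{1/d}\le\big(2^{d-1}(a+b)\big)^{1/d}$; enlarging a radius can only improve the confidence guarantee, so no compatibility condition ever arises. You should also note that the reason your condition fails is that the constant in the corollary is itself off: the paper's last displayed equality silently replaces $C_\star^d/(2\sqrt{d}^d)+\ln\beta^{-1}$ by $\ln\big((C_\star^d/(2\sqrt{d}^d))\beta^{-1}\big)$, which is an underestimate since $x>\ln x$; the constant consistent with the derivation is $C^\star=\exp\big(C_\star^d/(2\sqrt{d}^d)\big)$. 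With that corrected constant, $\ln C^\star=2^{d-1}c^\star C_\star^d\ge c^\star C_\star^d$, your condition (a) holds trivially, and your regime-splitting route (as well as the paper's) goes through.
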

\begin{proof}
  Note first that $e^{-Nt^{2p}/(2d^p{(2\rho)}^{2p})}\le
  e^{-Nt^d}/(2\sqrt{d}^d{(2\rho)}^d)$ when $t\in[0,\sqrt{d}{2\rho}]$ (for
  $t>\sqrt{d}{2\rho}$ the probability of interest is zero). Thus,
  using the inequality $a^{\frac{1}{q}}+b^{\frac{1}{q}}\le
  \big(2^{q-1}(a+b)\big)^{\frac{1}{q}}$ for $q\ge 1$, we get in
  analogy to the proof of
  Proposition~\ref{prop:explicit:ambiguity:radius} that
  \begin{align*}
    \eps 
    & ={2\rho}\big(CN^{-\frac{1}{d}}
    +\sqrt{d}(2\ln\beta^{-1})^{\frac{1}{d}}N^{-\frac{1}{d}}\big)    
    ={2\rho}\big(C_\star+\sqrt{d}(2\ln\beta^{-1})^{\frac{1}{d}}\big)N^{-\frac{1}{d}}
    \\
    & ={2\rho}\Big(\big(C_\star^d\big)^\frac{1}{d}
    +\big(2\sqrt{d}^d\ln\beta^{-1}\big)^{\frac{1}{d}}\Big)N^{-\frac{1}{d}}
    \le{2\rho}\Big(2^{d-1}\big(C_\star^d
    +2\sqrt{d}^d\ln\beta^{-1}\big)\Big)^{\frac{1}{d}}N^{-\frac{1}{d}}
    \\
    & ={2\rho}\big(2^{d-1}C_\star^d
    +2^d\sqrt{d}^d\ln\beta^{-1}\big)^{\frac{1}{d}}N^{-\frac{1}{d}}
    ={2\rho}\bigg(\frac{\frac{C_\star^d}{2\sqrt{d}^d}
      +\ln\beta^{-1}}{\frac{1}{2^d\sqrt{d}^d}}\bigg)^{\frac{1}{d}}N^{-\frac{1}{d}}
    \\
    &
    ={2\rho}\bigg(\frac{\ln\big(\frac{C_\star^d}{2\sqrt{d}^d}\beta^{-1}\big)}
    {\frac{1}{2^d\sqrt{d}^d}}\bigg)^{\frac{1}{d}}N^{-\frac{1}{d}}
    \equiv{2\rho}\bigg(\frac{\ln\big(C^\star\beta^{-1}\big)}
    {c^\star}\bigg)^{\frac{1}{d}}N^{-\frac{1}{d}},
  \end{align*}
  with $C^\star$ and $c^\star$ as given in the statement.
\end{proof}

\subsection{Sub-Gaussian norms of Gaussian mixtures}

Here we discuss how to compute the $\psi_2$ norm, i.e., the
sub-Gaussian norm of a random variable with a Gaussian mixture
distribution.

\noindent $\triangleright$ \textit{Fact IV.} For $X\sim\mathcal
N(0,1)$, it holds that $\|X\|_{\psi_2}=\sqrt{8/3}$. \quad
$\triangleleft$

\begin{proof}
  By definition, $\|X\|_{\psi_2}=\inf\{t>0\,|\,\bE[\exp(X^2/t^2)] \le
  2\}$. Therefore, we seek to determine
  $\inf\big\{t>0\,|\,\frac{1}{\sqrt{2\pi}}\int_\Rat{}
  \exp\big(-x^2\big(\frac{1}{2}-\frac{1}{t^2}\big)\big)dx\le 2\big\}$.
  Setting $\frac{1}{2\sigma^2}=\frac{1}{2}-\frac{1}{t^2}$, namely,
  $\sigma\equiv\sigma(t)=\sqrt{\frac{t^2}{t^2-2}}$, the expression
  becomes
  \begin{align*}
    \inf\bigg\{t>0\,\Big|\,\frac{\sigma}{\sqrt{2\pi}\sigma}\int_\Rat{} 
    \exp\bigg(-\frac{x^2}{2\sigma^2}\bigg)dx\le 2\bigg\}=
    \inf\bigg\{t>0\,\Big|\,\sqrt{\frac{t^2}{t^2-2}}\le 2\bigg\}=\sqrt{8/3}. 
  \end{align*} 
\end{proof}

\noindent $\triangleright$ \textit{Fact V.} For $X\sim\mathcal
N(\mu,\sigma^2)$, it holds that
$\|X\|_{\psi_2}=\sigma\sqrt{8/3}+\mu/\sqrt{\ln 2}$. \quad
$\triangleleft$

\begin{proof}
  Note that $X=Y+\sigma Z$, with $Y=\delta_\mu$ and $Z=\mathcal
  N(0,1)$. Since $\|\cdot\|_{\psi_2}$ is a norm, we get from Fact I in
  the proof of Proposition~\ref{prop:Bernstein:inequality} and Fact IV
  above that $\|X\|_{\psi_2}\le
  \|Y\|_{\psi_2}+\sigma\|Z\|_{\psi_2}=\mu/\sqrt{\ln
    2}+\sigma\sqrt{8/3}$.
\end{proof}

\noindent $\triangleright$ \textit{Fact VI.} Given arbitrary
distributions $\nu_i$, let $X_i~\sim \nu_i$, $i=1,\ldots,n$ and
$X\sim\sum_{i=1}^n c_i\nu_i$, with $\sum_{i=1}^nc_i=1$, $c_i
\ge 0$. Then $\|X\|_{\psi_2}\le
\max_{i=1,\ldots,n}\|X_i\|_{\psi_2}$.\quad $\triangleleft$
 
\begin{proof}
  From the definition of the ${\psi_2}$ norm,
  \begin{align*}
    \|X\|_{\psi_2} &
    =\inf\bigg\{t>0\,\Big|\,\sum_{i=1}^nc_i\int_\Rat{}\exp\big(x^2/t^2\big)
    {\nu_i(dx)}\le 2\sum_{i=1}^nc_i\bigg\}
    \\
    & \le \inf\bigg\{t>0\,\Big|\,\int_\Rat{}\exp\big(x^2/t^2\big)
    {\nu_i(dx)}\le 2\;\forall i=1,\ldots,n\bigg\}
    \\
    &
    =\max_{i=1,\ldots,n}\inf\bigg\{t>0\,\Big|\,\int_\Rat{}\exp\big(x^2/t^2\big)
    {\nu_i(dx)}\le 2\bigg\}=\max_{i=1,\ldots,n}\|X_i\|_{\psi_2}.
  \end{align*} 
\end{proof}

The following result is a consequence of Facts V and VI.

\begin{proposition}\longthmtitle{Sub-Gaussian norm of Gaussian
    mixture}\label{prop:Gaussian:mixture:psi2:norm} 
  Let $X\sim\sum_{i=1}^nc_i\mathcal N(\mu_i,\sigma_i^2)$, with
  $\sum_{i=1}^nc_i=1$,  $c_i \ge 0$. Then,  
  $\|X\|_{\psi_2}\le
  \max_{i=1,\ldots,n}\{\sigma_i\sqrt{8/3}+\mu_i/\sqrt{\ln 2}\}$.
\end{proposition}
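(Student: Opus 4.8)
The plan is to obtain this proposition as an immediate consequence of the two preceding facts, so the proof amounts to a short combination with no substantive obstacle. First I would set $\nu_i := \mathcal N(\mu_i,\sigma_i^2)$ and let $X_i\sim\nu_i$, so that the mixture hypothesis $X\sim\sum_{i=1}^n c_i\nu_i$ with $\sum_{i=1}^n c_i=1$ and $c_i\ge 0$ matches exactly the setting of Fact~VI. Invoking Fact~VI then yields
\begin{align*}
\|X\|_{\psi_2}\le \max_{i=1,\ldots,n}\|X_i\|_{\psi_2}.
\end{align*}

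Next I would evaluate each component norm. Since each $X_i$ is Gaussian with mean $\mu_i$ and variance $\sigma_i^2$, Fact~V supplies $\|X_i\|_{\psi_2}=\sigma_i\sqrt{8/3}+\mu_i/\sqrt{\ln 2}$. Substituting this into the previous display produces
\begin{align*}
\|X\|_{\psi_2}\le \max_{i=1,\ldots,n}\big\{\sigma_i\sqrt{8/3}+\mu_i/\sqrt{\ln 2}\big\},
\end{align*}
which is precisely the claimed bound.

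The only points that require a moment of care, rather than a genuine difficulty, are checking that the hypotheses of the two facts are compatible. Fact~VI is stated for arbitrary component distributions $\nu_i$, so specializing to Gaussians is legitimate; and Fact~V furnishes the needed value (in fact only the upper bound $\|X_i\|_{\psi_2}\le\sigma_i\sqrt{8/3}+\mu_i/\sqrt{\ln 2}$ is used here, which is exactly what the proof of Fact~V establishes). No condition on the mixture weights beyond $\sum_i c_i=1$ and $c_i\ge 0$ is required, and these are precisely the assumptions of Fact~VI, so the argument closes without further work.
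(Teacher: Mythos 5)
Your proof is correct and follows exactly the paper's route: the paper obtains the proposition as an immediate consequence of Facts V and VI, just as you do. Your observation that only the upper bound from Fact V is needed (which is all its proof actually establishes) is a fair and accurate point of care.
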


\bibliographystyle{siamplain}
\bibliography{alias,SM,JC,SMD-add} 

\end{document}